\newtheorem{theorem}{Theorem}
\newtheorem{proposition}[theorem]{Proposition}
\newtheorem{corollary}[theorem]{Corollary}
\newtheorem{lemma}[theorem]{Lemma}
\newtheorem{prob}[theorem]{Problem}
\theoremstyle{definition}
\newtheorem{definition}[theorem]{Definition}
\theoremstyle{remark}
\newtheorem{exam}[theorem]{Example}
\newtheorem{ntn}[theorem]{Convention}
\newtheorem{remark}[theorem]{Remark}
\numberwithin{equation}{section}
\newcommand         {\rar}[1]       {\stackrel{#1}{\longrightarrow}}
\newcommand         {\isom}         {\rar{\simeq}}
\newcommand         {\commentout}[1]    {}
\def    \ra     {\rightarrow}
\def    \inj    {\hookrightarrow}
\def    \onto   {\twoheadrightarrow}
\def    \*      {\times}
\def    \cO     {\mathcal{O}}
\def	\cOt	{\cO^\times}
\def	\fp		{\mathfrak{p}}
\def	\fm		{\mathfrak{m}}
\def    \cond   {\mathrm{cond}}
\def	\Hom	{\operatorname{Hom}}
\def    \SL     {\mathrm{SL}}
\def    \ind    {\operatorname{ind}}
\def    \End    {\operatorname{End}}
\def	\GL		{\mathrm{GL}}
\def	\Sp		{\mathrm{Sp}}
\def    \diag   {\mathrm{diag}}
\def    \sH    {\mathscr{H}}
\def	\sW		{\mathscr{W}}
\def 	\bbG		{\mathbb{G}}
\def    \bZ     {\mathbb{Z}}
\def    \bR     {\mathbb{R}}
\def    \bC     {\mathbb{C}}
\def    \Gm     {\mathbb{G}_m}
\def    \bCt    {\bC^\times}
\def    \Fq     {\mathbb{F}_q}
\def	  \tH		{\tilde{H}}
\def    \tG     {\tilde{G}}
\def    \hT    {\check{T}}
\newcommand{\Stab}{\operatorname{Stab}}
\newcommand{\hG}{\check{G}}
\newcommand		{\bmu}{{\bar{\mu}}}
\newcommand		\sR		{\mathscr{R}}
\newcommand	\charr		{\mathrm{char}}
\newcommand		\sV		{\mathscr{V}}
\newcommand 		\sm		{\mathrm{sm}}
\newcommand		\tQ		{(Q^\vee)_\sat}
\newcommand		\tT		{\tilde{T}}
\newcommand		\F		{\mathrm{F}}
\newcommand		\G		{\mathrm{G}}
\newcommand		\sat		{\mathrm{sat}}
\newcommand \tchi {\tilde{\chi}}
\begin{document}
\title{Ramified Satake isomorphisms for strongly parabolic characters}

\author{Masoud Kamgarpour} 
\address{Hausdorff Center for Mathematics, Endenicher Allee 62, 
53115 Bonn, Germany}
\email{masoudkomi@gmail.com}

\author{Travis Schedler}
\address{University of Texas at Austin, Mathematics Department, RLM 8.100, 2515 Speedway Stop C1200, Austin, TX 78712, USA}
\email{trasched@gmail.com}

\thanks{M.K. was supported by a Hausdorff Institute Fellowship.  T.S. was supported by an AIM Fellowship and the ARRA-funded NSF grant DMS-0900233.}

\subjclass[2010]{20G25}
\keywords{Satake isomorphisms, principal series, types, strongly parabolic characters, easy characters}

\begin{abstract} For certain characters of the compact torus of a reductive $p$-adic group, which we call strongly parabolic characters, we prove Satake-type isomorphisms. Our results generalize those of Satake, Howe, Bushnell and Kutzko, and Roche.
 \end{abstract} 

\maketitle

%{\small \setcounter{tocdepth}{1} \tableofcontents}
 \tableofcontents

\section{Introduction}
\subsection{The problem we study} 
Let $F$ be a local non-Archimedean field with ring of integers $\cO$
and residue field $\Fq$. Let $G$ be a connected split reductive group
over $F$ with split torus $T$ and Weyl group $W=N_G(T)/T$. Let $\hT$
denote the dual torus. Replacing $G$ by an isomorphic group, we may
assume that $G$ is defined over $\bZ$. Then $G(\cO)$ is a maximal compact (open) subgroup of
$G(F)$. Let $\sH(G(F), G(\cO))$ denote the convolution algebra of
compactly supported $G(\cO)$-bi-invariant complex valued functions on
$G(F)$. A celebrated theorem of Satake \cite{Satake} states that we
have a canonical isomorphism of algebras
\begin{equation}\label{eq:classicalSatake}
\sH(G(F), G(\cO)) \simeq \bC[\hT/W].
\end{equation} 
We are interested in generalizing this isomorphism to nontrivial
smooth characters $\bmu: T(\cO)\ra \bCt$, as follows.  Let
$W_{\bmu}\subseteq W$ denote the stabilizer of $\bmu$ under the action
of the Weyl group.  Then it is natural to pose:
\begin{prob}\label{p:main}
Construct a pair $(K,\mu)$ consisting of a compact open subgroup $T(\cO)\subseteq K\subseteq G(\cO)$ and a character $\mu:K\ra \bCt$ extending $\bmu$, such that we have an isomorphism of algebras 
\begin{equation} \sH(G(F), K,\mu)\simeq \bC[\hT/W_{\bmu}],
\end{equation}
where $\sH$ is the convolution algebra of $(K,\mu)$-bi-invariant compactly supported functions on $G(F)$.
\end{prob} 

The Satake isomorphism provides a solution for the above problem for
$\bmu=1$. In this paper, we solve the above problem for a large class
of characters of $T(\cO)$ which we call ``strongly parabolic
characters,'' which are by definition characters such that $W_\bmu$ is
the Weyl group of a Levi subgroup $L < G$, and moreover such that
$\bmu$ extends to $L(F)$. This appears to be the proper generality
where the problem has a positive solution. Our construction of $K$ is
tied to $L$.  We think of the isomorphism $\sH(G(F),K,\mu)\simeq
\bC[\hT/W_\bmu]$ as a Satake isomorphism for the (possibly) ramified
character $\bmu$. Therefore, we call these isomorphisms \emph{ramified
  Satake isomorphisms}.  For characters that are not strongly
parabolic, we do not have a reason to expect a positive answer to
Problem \ref{p:main}.

\subsection{History}\label{ss:history}
Following Satake, R. Howe studied Problem \ref{p:main} for $G=\GL_N$
\cite{Howe73}.  Via an isomorphism which he called the
$\bmu$-spherical Fourier transform, he completely solved the problem
for the general linear group. Howe's paper went largely unnoticed;
however, several cases of Problem \ref{p:main} were subsequently
solved using other methods.

In \cite{Bernstein84}, \cite{Bernstein92}, Bernstein constructed a
decomposition of the category of representations of $G(F)$ using the
theory of Bernstein center. Each block admits a projective
generator. In particular, for every character $\bmu:T(\cO)\ra \bCt$,
one has a block of representations of $G(F)$, which we denote by
$\sR_\bmu(G)$. Bernstein proved that the center of $\sR_\bmu(G)$ is
canonically isomorphic to $\bC[\hT/W_\bmu]$; see, for instance,
\cite[Theorem 1.9.1.1]{Roche09}. Moreover, he gave an explicit
description of a projective generator for each of these blocks; see
the RHS of \eqref{eq:IntroPhi}. When the character $\bmu$ is
\emph{regular}; i.e., $W_\bmu=\{1\}$, then the center is $\bC[\hT]$, and it
identifies canonically with the endomorphism ring of Bernstein's
generator.

In a fundamental paper \cite{Bushnell98}, Bushnell and Kutzko
organized the study of representations of $G(F)$ via compact open
subgroups into the theory of \emph{types}. Namely, they proposed that
one should be able to obtain a projective generator for every block of
representations of $G(F)$ by inducing a finite dimensional
representation from a compact open subgroup. The pair of the compact
open subgroup and its finite dimensional representation, up to a
certain equivalence, is called the type.  In \cite{Bushnell99} and
\cite{BKGL}, they explicitly construct types for every block of
representations of $\GL_N$. In particular, they construct projective
generators for the principal series blocks $\sR_\bmu(\GL_N)$. When the
character $\bmu$ is regular, their construction provides a pair
$(K,\mu)$ satisfying the requirement of Problem \ref{p:main}. We note,
however, that Bushnell and Cuzco's construction of types is
technically involved, since they consider all blocks (not merely the
principal series blocks); in particular, we were not able to locate
exactly where in their papers they construct types for the principal
series blocks of $\GL_N$.

Finally, Roche \cite{Roche98} constructed types for principal series
representations of arbitrary reductive groups in good characteristics
(which excluded in particular those listed in Convention
\ref{n:char}). In the case that $\bmu$ is regular, the type itself is
a pair $(K,\mu)$ satisfying the conditions of Problem \ref{p:main}.

In this paper, we build on the methods introduced by Bushnell and
Kutzko and Roche, and solve the problem for all strongly parabolic
characters. We make use of Roche's type in order to construct a pair
$(K,\mu)$ satisfying the conditions of Problem \ref{p:main}.

\subsection{On characters of $T(\cO)$} A significant part of this
paper, which may be of independent interest, is devoted to defining
and studying certain smooth characters of $T(\cO)$. Recall that a
subgroup $W'\subseteq W$ is \emph{parabolic} if it is generated by
simple reflections. The Levi subgroup $L$ associated to $W'$ is the
subgroup generated by $T$ and the the simple roots corresponding to
the simple reflections in $W'$ along with their negatives.

\begin{definition} Let $\bmu:T(\cO)\ra \bCt$ be a smooth character. 
\begin{enumerate} 
\item[(i)]  $\bmu$ is \emph{parabolic} if
the stabilizer $\Stab_W(\bmu)$ of $\bmu$ in $W$ is a parabolic subgroup.  
\item[(ii)] $\bmu$ is \emph{strongly parabolic} if it is parabolic
  with Levi $L$ and extends to a character of $L(F)$.
\item[(iii)] $\bmu$ is \emph{easy} it is parabolic and it extends to a
  character of $L(F)$ which is trivial on $[L,L](F)$.
\end{enumerate}
\end{definition}  
It follows immediately from the definition that the trivial character
and all regular characters are easy. Moreover, it is clear that
\begin{equation} 
  \textrm{easy} \implies \textrm{strongly parabolic}
 \implies \textrm{parabolic}. 
\end{equation} 
The reverse implications can all fail; see Examples
\ref{ex:strongpar-not-easy} and \ref{ex:stronglyPar}. 

To state our results regarding these characters, we need some notation. 
Let $\Delta$ denote the set of roots of $G$. 
Let ${X}$, ${X^\vee}$, $Q$, $Q^\vee$
denote the character, cocharacter, root and coroot lattices of $G$,
respectively. Below we will frequently impose the conditions that either ${X}/Q$
is free or ${X^\vee}/Q^\vee$ is free (or both).  We remark that
$X^\vee/Q^\vee$ being free is equivalent to $[G(\bC),G(\bC)]$ being
simply-connected, while $X/Q$ being free is equivalent to the statement that
$G(\bC)$ has connected center.\footnote{This follows from the fact that if $G$ is a (connected split) semisimple group, then $X/Q$ equals the dual of $Z(G(\bC))$ and $X^\vee/Q^\vee$ equals the dual of $\pi_1(G(\bC))$; see, for example \cite[Example 6.7]{Conrad}. For example,  for $\SL_2$, we have $(X,Q,X^\vee,Q^\vee)=(\bZ, 2\bZ, \bZ, \bZ)$.}

\begin{theorem} \label{t:characters} 
Let $\bmu:T(\cO)\ra \bCt$ be a smooth character.
\begin{enumerate} 
\item[(i)] $\bmu$ is easy if and only if it is parabolic and can be
  written as a product $\chi_1\cdots \chi_l$, where each $\chi_i$ is a
  character $T(\cO)\ra \bCt$ which is a composition of a
  $W_\bmu$-invariant rational character $T(\cO)\ra \cOt$ and a smooth character
  $\cOt\ra \bCt$.
\item[(ii)] The following are equivalent:
\begin{enumerate}
\item[(a)] $\bmu$ is strongly parabolic;
\item[(b)] $\bmu\circ \alpha^\vee|_{\cOt}=1, \quad \forall \alpha \in \Delta_L$.
\end{enumerate}
Moreover, if $q > 2$, then these are also equivalent to:
\begin{enumerate}
\item[(c)] $\bmu$ extends to a character of $L(\cO)$.
\end{enumerate}

\item[(iii)] If $X/Q$ is free or $\Delta$ has no
  factors of type $A_1$ or $C_n$, then every parabolic character of
  $T(\cO)$ is strongly parabolic.

\item[(iv)] If $X^\vee/Q^\vee$ is free, then every strongly parabolic
  character of $T(\cO)$ is easy.

\item[(v)] If $\Delta$ is simply-laced and $X/Q$ is free, then
  every character of $T(\cO)$ is strongly parabolic. 
  \end{enumerate} 
\end{theorem}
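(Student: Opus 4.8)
The plan is to run all five parts through the identification of a smooth character $\bmu\colon T(\cO)\to\bCt$ with a homomorphism $X^\vee\to\widehat{\cOt}$, $\widehat{\cOt}=\Hom(\cOt,\bCt)$, normalised by $\bmu(\lambda^\vee\otimes u)=\bmu(\lambda^\vee)(u)$. From $\cOt=\mu_{q-1}\times(1+\fp)$ with $1+\fp$ pro-$p$ one gets $\widehat{\cOt}\cong(\bZ/(q-1))\oplus A_p$ with $A_p$ discrete $p$-torsion, so $\bmu$ factors canonically as a tame part of order dividing $q-1$ times a wild part of $p$-power order. Since $s_\alpha$ acts on $X^\vee$ by $\lambda^\vee\mapsto\lambda^\vee-\langle\alpha,\lambda^\vee\rangle\alpha^\vee$, setting $N_\bmu=\ker(\bmu\colon X^\vee\to\widehat{\cOt})$ gives $W_\bmu=\{w\in W:(w-1)X^\vee\subseteq N_\bmu\}$, and for any root $\alpha$ that is \emph{primitive in $X$} (i.e.\ $\langle\alpha,-\rangle\colon X^\vee\twoheadrightarrow\bZ$) one has $s_\alpha\in W_\bmu\iff\alpha^\vee\in N_\bmu\iff\bmu\circ\alpha^\vee|_{\cOt}=1$. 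All roots are primitive in $X$ whenever $X/Q$ is free (roots are primitive in $Q$, and $X/Q$ torsion-free keeps $\bZ\alpha$ saturated in $X$), and whenever no irreducible factor of $\Delta$ has type $A_1$ or $C_n$ (the only irreducible systems possessing a root equal to twice a weight). This primitivity statement is the shared input for (iii) and (v).

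Parts (i) and (iv) are torus-and-perfectness arguments. For (i): an easy $\bmu$ extends to a character of the torus $L^{\mathrm{ab}}(F)=(L/[L,L])(F)$, whose character lattice is $\{\lambda\in X:\langle\lambda,\alpha^\vee\rangle=0\ \forall\alpha\in\Delta_L\}=X^{W_\bmu}$; a $\bZ$-basis $\lambda_1,\dots,\lambda_l$ of $X^{W_\bmu}$ writes the restriction to $L^{\mathrm{ab}}(\cO)$ as $\prod_i\psi_i\circ\lambda_i$ and hence $\bmu=\prod_i\chi_i$ with $\chi_i=\psi_i\circ\lambda_i$; conversely such a factorisation makes $\bmu$ easy because each $W_\bmu$-invariant $\lambda_i$ kills the coroots of $L$, hence factors through $L^{\mathrm{ab}}$, so $\chi_i$ extends to $L(F)$ trivially on $[L,L](F)$. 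For (iv): $X^\vee/Q^\vee$ free makes $[G,G]$, hence $[L,L]$, simply connected, so $[L,L](F)$ is perfect (a split simply connected semisimple group over the infinite field $F$ is its own commutator subgroup), and therefore every extension of a strongly parabolic $\bmu$ to $L(F)$ is automatically trivial on $[L,L](F)$, i.e.\ $\bmu$ is easy.

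For (ii): (a)$\Rightarrow$(b) since $\alpha^\vee(F^\times)$ lies in the root-$\SL_2(F)\subseteq L(F)$ attached to $\alpha\in\Delta_L$, which is perfect and hence killed by every character of $L(F)$. For (b)$\Rightarrow$(a), $L(F)^{\mathrm{ab}}$ is $T(F)$ modulo the subgroup generated by the $\alpha^\vee(F^\times)$, $\alpha\in\Delta_L$ (root subgroups die in the abelianisation, and the relation $w_\alpha(u)w_\alpha(-1)=\alpha^\vee(u)$ then kills the coroots too), so (b) is exactly the condition for a suitable extension of $\bmu$ to $T(F)$ to descend to $L(F)^{\mathrm{ab}}$. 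The equivalence with (c) for $q>2$ runs the same abelianisation computation over $\cO$ — $L(\cO)$ being generated by $T(\cO)$ and the $\cO$-points of the root subgroups — where $q>2$ is exactly what is needed for that identity to hold integrally.

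For (iii): under either hypothesis every root of $L$ is primitive in $X$, so by the first paragraph $s_\alpha\in W_\bmu\iff\bmu\circ\alpha^\vee|_{\cOt}=1$; a parabolic $\bmu$ with $W_\bmu=W_L$ therefore satisfies (b), hence is strongly parabolic by (ii). For (v): since $X/Q$ is free, (iii) reduces the claim to showing $W_\bmu$ is a parabolic subgroup of $W$. Splitting $\bmu$ into its tame and wild parts, and the wild part into its prime-power components, and using that the intersection of two parabolic subgroups of a finite Coxeter group is parabolic, one reduces to $\bmu$ of prime-power order, i.e.\ to a torsion point $t$ of the dual torus $\hT$; as $X/Q$ free means $Z(G)$ is connected, equivalently $\hG$ has simply connected derived subgroup, Steinberg's connectedness theorem applied to $\hG$ gives $W_\bmu=W(Z_{\hG}(t))$, a reflection subgroup. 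The hard part — and, I expect, the crux of the whole theorem — is then to upgrade this reflection subgroup to a genuine \emph{parabolic} (Levi-type) subgroup, ruling out pseudo-Levi stabilisers; this is where simple-lacedness of $\Delta$ must enter, and I would try to prove it by showing that for a lift $\chi\in X$ of $t$ the subsystem $\{\alpha\in\Delta:\langle\chi,\alpha^\vee\rangle\equiv0\}$ is closed (equal to $\Delta\cap\Span_\bQ$ of itself), hence parabolic, using that in the simply-laced case sublattices spanned by coroots are saturated in $Q$. Once $W_\bmu=W_L$ for a genuine Levi $L$, (v) follows from (iii).
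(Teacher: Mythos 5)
Your parts (i)--(iv) take essentially the same route as the paper, repackaged through the kernel $N_\bmu = \ker(\bmu\colon X^\vee \to \widehat{\cOt})$; the paper instead works directly with the lattices $Q^\vee$, $(Q^\vee)_\sat$, and $X^W$ and reduces everything to Lemma~\ref{l:rationalChar}, Propositions~\ref{p:easy} and~\ref{p:strongpar}, and Lemma~\ref{l:cc-sc-strongpar-easy}. Your ``primitivity in $X$'' test for $s_\alpha\in W_\bmu$ is exactly condition~(i$'$) of Lemma~\ref{l:cc-sc-strongpar-easy}, and your perfection argument for (iv) (using that $[L,L](F)$ is perfect when $[L,L]$ is simply connected, so $[L(F),L(F)]\supseteq[L,L](F)$) is a clean alternative to the paper's lattice argument via $(Q^\vee)_\sat = Q^\vee$. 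The one place you skip genuine content is (b)$\Leftrightarrow$(c) of part (ii): the identity $[L(\cO),L(\cO)]\cap T(\cO) = \langle\alpha^\vee(\cOt)\rangle_{\alpha\in\Delta_L}$ for $q>2$ is \emph{not} ``the same abelianisation computation over $\cO$.'' The paper has to prove it by an explicit $\SL_2(\cO)$ commutator manipulation (Lemma~\ref{l:sl2-tech}), using that squaring is bijective on $1+\fp$ so that the ideal generated by the elements $1-g^2$ contains $\fp$, with a separate case for $q=3$. Your sentence glosses over exactly this; you should carry it out or cite it.

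For part (v) you correctly identify the crux --- ruling out pseudo-Levi stabilisers --- but you leave it open, and your proposed fix does not hold: coroot sublattices of closed subsystems are \emph{not} saturated in $Q^\vee$ in simply-laced type. Worse, the statement of (v) as given appears to fail. Take $G$ adjoint of type $D_4$, so $X=Q$ (hence $X/Q$ free) and $\Delta$ simply-laced; let $\theta\in\widehat{\cOt}$ have order $2$ (it exists since $q$ is odd). Define $\phi\colon X^\vee\to\widehat{\cOt}$ on the coweight lattice by $\phi(e_1)=\phi(e_2)=1$, $\phi(e_3)=\phi(e_4)=\theta$, and $\phi(\tfrac12(e_1+e_2+e_3+e_4))=1$; this is consistent and produces a smooth $\bmu$. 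Then $\Delta_\bmu=\{\pm(e_1\pm e_2),\pm(e_3\pm e_4)\}$, a copy of $A_1^4$, and a direct computation gives $W_\bmu = \langle s_{e_1\pm e_2}, s_{e_3\pm e_4}\rangle \cong (\bZ/2)^4$ of order $16$, which is not a parabolic subgroup of $W(D_4)$ (the proper parabolics of $W(D_4)$ have orders $1,2,4,6,8,24$). This $\Delta_\bmu$ is a closed subsystem (no two of its roots sum to a root of $D_4$) but it is a pseudo-Levi subsystem coming from the affine diagram $\tilde D_4$, not a Levi, and the lattice $\langle(e_1\pm e_2)^\vee,(e_3\pm e_4)^\vee\rangle$ has index $2$ in $Q^\vee$ and so is not saturated. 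The paper's proof of Proposition~\ref{p:parabolicADE} establishes that $\Delta_\bmu$ is closed and then takes ``the Levi subgroup corresponding to $\Delta_\bmu$,'' which (via the parenthetical in Remark~\ref{r:roche-th}) conflates ``closed root subsystem'' with ``roots of a Levi.'' That conflation is exactly the gap you were feeling for, and it is not closable by the saturation argument you suggest.
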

Section \ref{s:characters} is devoted to the proof of the above theorem.

We now indicate what the above theorem implies for characters of various groups. By $G/Z$ we mean $G/Z(G)$. The letter $N$ denotes a positive integer. We let $\mathrm{E}_n$, $n=6,7,8$ (resp. $\F_4$ and $\G_2$) denote the split reductive group whose
associated complex group is the connected, simply-connected, simple
group of type $\mathrm{E}_n$ (resp. $\F_4$ and $\G_2$).   
\begin{equation}\label{t:groups}
 \begin{tabular}{|c|c|c|}  \hline
 {\textbf{Reductive group}} &  {\textbf{Properties}} &  {\textbf{Characters}} \\ \hline
$\GL_N$,  $\mathrm{E}_8$ & simply-laced, ${X}/Q$ and ${X^\vee}/Q^\vee$ free & 
{\textrm{all characters are easy}} \\\hline
$\mathrm{PGL}_N$, $\mathrm{GO}_{2N},$ & \textrm{simply-laced and} & 
\textrm{all characters are} \\
$\mathrm{SO}_{2N}/Z$, $\mathrm{E}_6/Z, \mathrm{E}_7/Z$ &   X/Q free & {\textrm{strongly parabolic}}\\ \hline
$\mathrm{SL}_N$ ($N \geq 3$), $\mathrm{GSp}_{2N}$, & $X^\vee/Q^\vee$ free, and hypothesis of (iii)
  &  {\textrm{all parabolic characters are easy}} \\
 $\mathrm{Spin}_N, E_N$ ($N \geq 6$), $\mathrm{F}_4$, $\mathrm{G}_2$ & &
\\
\hline
$\Sp_{2N}/Z$, $\mathrm{GO}_N$, $\mathrm{SO}_N$
&  hypothesis of (iii)
& \textrm{all parabolic characters} 
\\ & & \textrm{are strongly parabolic}
 \\ \hline
\end{tabular}
\end{equation}

\begin{remark} Let $G$ be a (connected) algebraic group over a field
  $k$. Let $\bar{k}$ denote an algebraic closure of $k$. Then $G$ is
  said to be \emph{easy} if every $g\in G(\bar{k})$ is in the neutral
  connected component of its centralizer in $G\otimes_k \bar{k}$. This
  definition is due to V. Drinfeld. Based on the discussion in, e.g.,
  \cite[\S 2.2]{Boy-cugff}, there appears to be a relationship between
  Drinfeld's notion of easy and ours, when $k$ has characteristic zero. Namely, here we show that, if $[G,G]$ is simply connected
  and $Z(G)$ is connected, then every parabolic character is easy (and
  the parabolic assumption is not needed in the simply-laced case); in
  \cite[\S 2.2]{Boy-cugff} it is asserted, without proof, that these
  two assumptions are equivalent (over a field of characteristic zero) to $G$ being easy in
  Drinfeld's sense.
\end{remark}

\begin{remark} \label{r:roche-th} To every character $\bmu:T(\cO)\ra
  \bCt$, Roche \cite[\S 8]{Roche98} associated a possibly disconnected
  split reductive group $\tH=\tH_\bmu$ over $F$. The connected
  component of $\tH$ is an endoscopy group for $G$. It follows from
  Theorem \ref{t:characters}.(ii) that strongly parabolic characters
  are exactly those characters for which $\tH$ is the Levi of a
  parabolic of $G$ (and in particular connected).  In more detail, by
  \cite[Definition 6.1]{Roche98}, the coroots $\alpha^\vee$ of the
  connected component $H$ of the identity of $\tH$ (as a complex
  reductive group) are exactly those for which $\bmu \circ \alpha^\vee
  |_{\cOt} = 1$, and by \cite[Lemma 8.1.(i)]{Roche98}, the stabilizer
  of $\bmu$ equals the Weyl group of $H$ (and is not bigger) if and
  only if $\tH = H$. Then, we conclude because the Weyl group of $H$
  is a parabolic subgroup of the Weyl group of $G$
  if and only if $H$ is a Levi subgroup of $G$ (i.e., its roots form a
  closed root subsystem of those of $G$).
\end{remark}

\subsection{Satake isomorphisms} 
In this section, we let $G$ be a connected split reductive group over a local field $F$. 
We impose the following restrictions on the residue characteristic of $F$. 
\begin{ntn} \label{n:char} For every irreducible direct factor of the
  root system of $G$, we assume that the residue characteristic of $F$
is not one of the following primes:
\begin{equation}\label{eq:char}
  \begin{tabular}{|c|c|}  \hline
    Root system & Excluded primes \\
    \hline
    $B_n, C_n, D_n$ & $\{2\}$ \\ \hline
    $F_4, G_2, E_6, E_7$ & \{2,3\} \\ \hline
    $E_8$ & $\{2,3,5\}$ \\ \hline
\end{tabular}
\end{equation}
\end{ntn}

\begin{theorem}\label{t:Satake} Let $G$ be a connected split reductive 
  group over a local field $F$ whose residue characteristic satisfies
  the above restrictions.  Then for every strongly parabolic character
  $\bmu:T(\cO)\ra \bCt$, there exists a compact open subgroup
  $K<G(\cO)$ and an extension $\mu:K\ra \bCt$ such that
\begin{equation}\label{eq:Satake}
\sH(G(F), K,\mu) \simeq \bC[\hT/W_\bmu]
\end{equation} 
\end{theorem}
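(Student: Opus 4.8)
The plan is to reduce the ramified Satake isomorphism to Roche's construction of types for principal series blocks together with a standard induction-from-a-Levi argument. First I would recall Roche's type: for a strongly parabolic character $\bmu$, by Theorem \ref{t:characters}.(ii) the endoscopy group $\tH_\bmu$ of Remark \ref{r:roche-th} is the (connected) Levi $L$, and Roche's type $(K_R,\mu_R)$ for the block $\sR_\bmu(G)$ is built from a decomposition of $G(\cO)$ adapted to the roots of $L$. The Hecke algebra $\sH(G(F),K_R,\mu_R)$ is then, by \cite{Roche98}, isomorphic to an affine Hecke algebra attached to the (possibly disconnected, but here connected) group $\tH_\bmu = L$ with equal parameters $q$ — i.e. to the Iwahori--Hecke algebra $\sH(L(F),L(\cO)\textrm{-Iwahori})$ of $L$. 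I would take $K$ to be a suitable enlargement of $K_R$: concretely, the subgroup generated by $K_R$ and $L(\cO)$ (or the preimage in $G(\cO)$ of a parabolic subgroup of $L(\Fq)$ with Levi quotient $T(\Fq)$), chosen so that $\mu_R$ extends to a character $\mu$ of $K$ using the extension of $\bmu$ to $L(\cO)$ guaranteed (when $q>2$) by Theorem \ref{t:characters}.(ii)(c), and in general by the extension to $L(F)$.

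The key computation is then to identify $\sH(G(F),K,\mu)$. The point is that passing from $K_R$ to $K$ on the Hecke-algebra side corresponds, under Roche's isomorphism, to passing from the full Iwahori--Hecke algebra of $L$ to its \emph{spherical} (i.e. $L(\cO)$-bi-invariant, twisted by $\bmu$) subalgebra. Indeed, $\mu|_{L(\cO)}$ is the extension of $\bmu$, so $\sH(L(F),L(\cO),\bmu)$ is the twisted spherical Hecke algebra of $L$; and because $\bmu$ extends to a character of $L(F)$ — call it still $\bmu$ — twisting by $\bmu^{-1}$ gives an algebra isomorphism $\sH(L(F),L(\cO),\bmu)\simeq \sH(L(F),L(\cO))$, which by the classical Satake isomorphism \eqref{eq:classicalSatake} for $L$ is $\bC[\hT/W_L]=\bC[\hT/W_\bmu]$, the last equality because $W_\bmu = W_L$ by the strongly-parabolic hypothesis. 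So the steps are: (1) write down $K$ and $\mu$; (2) show, via Roche's intertwining/support computations, that $\sH(G(F),K,\mu)$ is isomorphic (as an algebra) to the $\bmu$-twisted $L(\cO)$-spherical Hecke algebra of $L(F)$ — this is the Satake-type reduction to the Levi; (3) untwist by the extension of $\bmu$ to $L(F)$; (4) invoke classical Satake for $L$ and identify $W_L = W_\bmu$.

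The main obstacle is step (2): matching the two Hecke algebras as \emph{algebras}, not just as vector spaces. Concretely one must compute the support of $\sH(G(F),K,\mu)$ — showing it is supported on the double cosets $K\backslash L(F)/K$, equivalently that the intertwining condition forces elements into $L(F)$, which is essentially Roche's Iwahori-factorization and the vanishing of intertwiners off $L$ — and then verify the convolution of the relevant basis elements matches convolution in $\sH(L(F),L(\cO),\bmu)$. A subtlety here is that $K$ is not $L(F)\cap$ (parahoric) but a genuinely smaller group adapted to $\bmu$, so one needs Roche's explicit decomposition of $K$ relative to the root subgroups to control both support and structure constants; in particular one must check that the structure constants do not pick up extra powers of $q$ beyond those in the spherical Hecke algebra of $L$. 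I expect this to go through by the same Bruhat-decomposition and unipotent-integration estimates Roche uses, but bookkeeping the $\mu$-twist on $K\cap($unipotent radicals$)$ — where it must be trivial, by the definition of a strongly parabolic character and the choice of $K$ — is the delicate part; this is exactly where the excluded residue characteristics of Convention \ref{n:char} enter, since they are needed for Roche's construction to be available.
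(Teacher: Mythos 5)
Your strategy diverges from the paper's at the crucial step, and the divergence is exactly where the gap lies. The paper does \emph{not} compute $\sH(G(F),K,\mu)$ directly by intertwining supports and structure constants. Instead it takes a detour through representation theory: it identifies $\sH(G(F),K,\mu)$ with $\End_{G(F)}(\sV)$ where $\sV=\ind_K^{G(F)}\mu$, then proves (Theorem \ref{t:main2}) that $\sV$ is $G(F)$-equivariantly isomorphic to $\Theta=\iota_{P(F)}^{G(F)}\ind_{L(\cO)}^{L(F)}\mu^L$. The isomorphism $\sV\simeq\Theta$ is obtained by restricting an explicit isomorphism $\Phi:\sW\isom\Pi$ — whose existence comes from the theory of \emph{covers} (Bushnell--Kutzko, Roche, Dat, Blondel), using that Roche's $(J,\mu^J)$ is a cover of $(I_L,\mu^L)$ — and then checking, via an averaging computation $\Phi(f_0^c)=a_0^c$, that $\Phi$ carries $\sV$ onto $\Theta$. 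Finally, $\End_{G(F)}(\Theta)\simeq\sH(L(F),L(\cO),\mu^L)$ is deduced from Roche's 2002 theorem that parabolic induction gives an equivalence of Bernstein blocks. This replaces, wholesale, your step (2).

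Your proposed step (2) — showing that the $\mu$-intertwiners are supported on $K\backslash L(F)/K$ and that the structure constants match those of the twisted spherical Hecke algebra of $L$ — is exactly the direct, combinatorial route that you correctly flag as the main obstacle, but you only offer a plausibility argument that "this should go through by the same Bruhat-decomposition and unipotent-integration estimates Roche uses." That is a genuine gap: none of Roche's published intertwining computations compare $\sH(G(F),K,\mu)$ with the \emph{spherical} Hecke algebra of $L$; his computation matches the type's Hecke algebra with the \emph{Iwahori}-Hecke algebra of $\tH_\bmu$, and passing from $J$ to $K=JL(\cO)$ does not formally reduce the former to a known idempotent-subalgebra identification. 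Controlling the structure constants — in particular verifying no extraneous $q$-powers appear when comparing double-coset volumes of $K$ in $G(F)$ with those of $L(\cO)$ in $L(F)$ — requires a support-preserving algebra isomorphism $\sH(L(F),L(\cO),\mu^L)\isom\sH(G(F),K,\mu)$ that is not in the literature. In fact, the paper explicitly acknowledges this: in \S\ref{ss:directions} the authors describe precisely this direct, support-preserving approach as work deferred to a forthcoming paper. So your proposal is not wrong in principle, but it sketches the harder companion proof rather than supplying the one actually given here, and it leaves the decisive algebraic comparison unproved.

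One smaller imprecision: you say the intertwining condition "forces elements into $L(F)$," but what is actually needed is a bijection between $\mu$-intertwining double cosets $K\backslash G(F)/K$ and $\mu^L$-intertwining double cosets $L(\cO)\backslash L(F)/L(\cO)$, together with a matching of convolution — the former need not literally be represented by elements of $L(F)$, and even when they can be, that alone does not give an algebra isomorphism without the volume/structure-constant check.
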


 As mentioned above, in the
case of $G=\GL_N$, the above theorem is due to Howe \cite{Howe73}, and  if $\bmu$ is regular, then
the above theorem follows by combining results of Bernstein \cite{Bernstein84}, \cite{Bernstein92},
Bushnell-Kutzko \cite{Bushnell98}, \cite{Bushnell99} and Roche
\cite{Roche98}. As far as we know, the generalization to strongly
parabolic characters is new.

\begin{exam} \label{ex:GLN} Let $G=\GL_3$ and let $T(\cO)\simeq
  (\cOt)^3$ denote the group of diagonal matrices. Write
  $\bmu=(\bmu_1,\bmu_2,\bmu_3)$ where each $\bmu_i$ is a smooth character $\cOt\ra \bCt$. Suppose $\bmu_1=\bmu_2$ and that the conductor
  $\cond(\bmu_1/\bmu_3)$ equals  $n\geq 2$. (The conductor of a character $\chi:\cOt\ra
  \bCt$ is the smallest positive integer $c$ for which
  $\chi(1+\fp^c)=\{1\}$.)   If we follow Howe's approach, we would take
  \[
  K= \begin{pmatrix} \cO & \cO & \cO  \\
    \cO & \cO  &\cO  \\
    \fp^{n} & \fp^{n}  & \cO
  \end{pmatrix} \cap G(\cO). 
  \]
  On the other hand, in the present article, following more closely the types of \cite{Roche98}, we take instead
  \[
  K= \begin{pmatrix} \cO & \cO & \fp^{[\frac{n}{2}]}  \\
    \cO & \cO  &\fp^{[\frac{n}{2}]}  \\
    \fp^{[\frac{n+1}{2}]} & \fp^{[\frac{n+1}{2}]}  & \cO
  \end{pmatrix} \cap G(\cO). 
  \]
In both cases, $\bmu$ extends to a character $\mu:K\ra \bCt$ and one
has an isomorphism $\sH(G(F), K,\mu)\simeq \bC[\hT]$. This example
shows that the subgroup $K$ of Theorem \ref{t:Satake} is not necessarily unique.
\end{exam}

To prove Theorem \ref{t:Satake}, we use Roche's result on types for principal series representations.
Given an arbitrary smooth character $\bmu:T(\cO)\ra \bCt$,
Roche \cite{Roche98} constructed a compact open subgroup $J\subset
G(F)$ (which depends on the choice of $B$) and an extension $\mu^J: J\ra \bCt$ such that the compactly
induced representation
\begin{equation} \sW:=\ind_J^{G(F)} \mu^J
\end{equation} 
is a progenerator for the principal series Bernstein block of $G$
defined by $\bmu$. More precisely, a combination of results of Bushnell and Kutzko, Dat, and Roche implies that in this situation, one has an explicit isomorphism of $G(F)$-modules 
\begin{equation} \label{eq:IntroPhi}
\Phi: \sW\isom \Pi:=\iota_{B(F)}^{G(F)} \left( \ind_{T(\cO)}^{T(F)} \bmu \right).
\end{equation} 
Here, $\iota$ denotes the functor of parabolic
induction.\footnote{Note that here and in \eqref{eq:Theta}, it does not matter if we use normalized
  or unnormalized parabolic induction since the representation being
  induced is isomorphic to its twist by any unramified character.}
See \S \ref{ss:main2} for the explicit description of $\Phi$. 
Note that the endomorphism algebra of $\sW$ is canonically
isomorphic with $\sH(G(F), J, \mu^J)$. 

Now suppose the character $\bmu$ is strongly parabolic. Let $L$ denote
the corresponding Levi and let $\mu^{L(F)}:L(F)\ra \bCt$ denote an
extension of $\bmu$ to $L(F)$.  Let $\mu^L = \mu^{L(\cO)} :=
\mu^{L(F)}|_{L(\cO)}$ denote its restriction to $L(\cO)$. We prove
that $K=JL(\cO)$ is a subgroup of $G(F)$. Moreover, we show that there
exists a canonical character $\mu:K\ra \bCt$ which extends $\mu^J$ and
$\mu^L$. Theorem \ref{t:Satake} states that the Hecke algebra
$\sH(G(F), K,\mu)$, consisting of compactly supported
$(K,\mu)$-bi-invariant functions on $G(F)$, is isomorphic to
$\bC[\hT/W_\bmu]$. To prove this result, we realize $\sH(G(F), K,\mu)$
as an endomorphism ring of a family of principal series
representations, which we call a \emph{central family}.

\subsection{Central families}
\begin{definition} \label{d:central}
Let $\bmu$ be a strongly parabolic character with the corresponding Levi $L$. Let $K=JL(\cO)$ denote the corresponding compact open subgroup. The \emph{central family} of principal series representations of $G$ attached to $\bmu$ is defined by
\begin{equation} \label{e:sv-defn}
\sV:=\ind_K^{G(F)} \mu. 
\end{equation} 
\end{definition} 

Note that $\sV$ is a submodule of $\sW$ and the latter is a
progenerator for the principal series block corresponding to
$\bmu$. According to Theorem \ref{t:Satake}, the endomorphism ring
$\sH$ of this family identifies with the center of the corresponding
Bernstein block (which is isomorphic to the center of $\sH(G(F),
J,\mu^J)$, and hence isomorphic to
$\bC[\hT/W_\bmu]$; cf.~\S \ref{ss:history}). Moreover, one can show
that, for generic maximal ideals $\fm\subset \sH$, the $G(F)$-module
$\sV/{\fm \sV}$ is an irreducible principal series representation.
(We will neither prove nor use the last statement.) We will now give
an alternative description of $\sV$. Let $P\supseteq B$ be a parabolic
subgroup whose Levi is isomorphic to $L$. Let

\begin{equation} \label{eq:Theta}
\Theta:=\iota_{P(F)}^{G(F)} \left(\ind_{L(\cO)}^{L(F)} \mu^L\right).
\end{equation}

\begin{theorem} \label{t:main2} Under the assumptions of Theorem
  \ref{t:Satake}, we have a canonical isomorphism of $G(F)$-modules
  $\sV \isom \Theta$.
\end{theorem}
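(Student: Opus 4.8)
The plan is to prove Theorem \ref{t:main2} by combining the isomorphism $\Phi\colon \sW\isom\Pi$ of \eqref{eq:IntroPhi} with a transitivity-of-induction argument, reducing everything to statements about principal series of the Levi $L$. First I would recall that $\sV=\ind_K^{G(F)}\mu$ with $K=JL(\cO)$, and that $\mu|_J=\mu^J$, $\mu|_{L(\cO)}=\mu^L$. Since $J\subseteq K$, there is a natural surjection $\sW=\ind_J^{G(F)}\mu^J\twoheadrightarrow \ind_K^{G(F)}\mu=\sV$ (summing over cosets); dually $\sV$ sits inside $\sW$ as the $(K,\mu)$-isotypic part, i.e. as the image of the projector $e_K$ averaging $e_J$ over $K/J$. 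So the first step is: identify $\sV$ with $e_K\cdot\sW$ inside $\sW$, hence via $\Phi$ with $e_K\cdot\Pi$ inside $\Pi=\iota_{B(F)}^{G(F)}(\ind_{T(\cO)}^{T(F)}\bmu)$.

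The second and main step is to compute $e_K\cdot\Pi$ and identify it with $\Theta=\iota_{P(F)}^{G(F)}(\ind_{L(\cO)}^{L(F)}\mu^L)$. The key is transitivity of parabolic induction: writing $B\cap L=B_L$ for the Borel of $L$, we have $\Pi=\iota_{B(F)}^{G(F)}(\ind_{T(\cO)}^{T(F)}\bmu)=\iota_{P(F)}^{G(F)}\bigl(\iota_{B_L(F)}^{L(F)}(\ind_{T(\cO)}^{T(F)}\bmu)\bigr)$ and $\Theta=\iota_{P(F)}^{G(F)}(\ind_{L(\cO)}^{L(F)}\mu^L)$, so it suffices to produce an $L(F)$-equivariant isomorphism between the appropriate sub/quotient of $\iota_{B_L(F)}^{L(F)}(\ind_{T(\cO)}^{T(F)}\bmu)$ cut out by $e_{L(\cO)}$ (or more precisely by the projector attached to the $L$-part of $K$) and $\ind_{L(\cO)}^{L(F)}\mu^L$. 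But this is exactly the assertion of Theorem \ref{t:Satake} applied \emph{to the group $L$ with its regular-in-$L$ character $\bmu$}: since $W_\bmu$ is the full Weyl group of $L$, the character $\bmu$ is "spherical" relative to $L$, and the pair $(L(\cO),\mu^L)$ plays the role of the maximal compact — indeed one expects $\iota_{B_L(F)}^{L(F)}(\ind_{T(\cO)}^{T(F)}\bmu)$ to contain $\ind_{L(\cO)}^{L(F)}\mu^L$ as the $(L(\cO),\mu^L)$-generated submodule. Concretely, I would check that the $L$-analogue of Roche's subgroup $J$, call it $J_L$, satisfies $J_L\cdot L(\cO)=L(\cO)$ (because for $L$ the character is stabilized by all of $W_L$, Roche's $J_L$ is contained in $L(\cO)$ up to the needed identifications), so that $e_{L(\cO)}$ applied to $\sW_L=\ind_{J_L}^{L(F)}\mu^{J_L}\cong\iota_{B_L(F)}^{L(F)}(\ind_{T(\cO)}^{T(F)}\bmu)$ gives exactly $\ind_{L(\cO)}^{L(F)}\mu^L$.

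The hardest point will be the bookkeeping that makes the two projectors $e_K$ (on $G$) and the pair $e_P$-then-$e_{L(\cO)}$ (via transitivity) literally agree under $\Phi$ — i.e. verifying that Roche's isomorphism $\Phi$ is compatible with the decomposition $K=JL(\cO)$ and with transitivity of induction, so that the "$L(\cO)$-averaging inside $G$" really is "parabolic induction of $L(\cO)$-averaging inside $L$." This requires knowing the explicit form of $\Phi$ from \S\ref{ss:main2} and checking its behaviour on the relevant Iwahori-type factorizations; in particular one needs $K\cap P^-(F)$, $K\cap L(F)$, $K\cap U_P(F)$ to have the expected product structure so that an Iwasawa/Iwahori decomposition of $G(F)$ relative to $(K,P)$ is available. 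Once that compatibility is in hand, the $G(F)$-equivariance of the resulting map $\sV\to\Theta$ is formal (both are realized as functions on $G(F)$ with the right transformation properties and the map is restriction/averaging), and bijectivity follows from the bijectivity of $\Phi$ together with the $L$-level statement. I would also note as a sanity check that taking endomorphism rings of both sides recovers Theorem \ref{t:Satake}: $\End(\Theta)=\End(\iota_P^G(\ind_{L(\cO)}^{L(F)}\mu^L))$, which by the geometric lemma / Bernstein-style computation is $\bC[\hT/W_\bmu]$ since $\ind_{L(\cO)}^{L(F)}\mu^L$ has endomorphism ring $\bC[\hT]$ ($\bmu$ being regular-modulo-$W_L$) and the induction from $P$ contributes precisely the $W_\bmu=W_L$-invariants with no extra intertwining.
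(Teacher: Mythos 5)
Your overall strategy agrees with the paper's in spirit: both realize $\sV$ inside $\sW$ and $\Theta$ inside $\Pi$ as images of left $(L(\cO),\mu^L)$-averaging projectors, and both aim to show $\Phi$ intertwines these projectors. However, your route through transitivity of parabolic induction and a reduction to ``Theorem \ref{t:Satake} for $L$'' does not actually close the argument, and there is a genuine gap exactly where you flag ``the hardest point.''

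The gap: the content of the proof is precisely the assertion that $\Phi$ intertwines the two $L(\cO)$-averaging projectors, i.e.\ that $\Phi(\sV)=\Theta$. You reformulate this via transitivity, but that reformulation does not make the step easier, and Theorem \ref{t:Satake} applied to $L$ is a Hecke-algebra isomorphism, not the module-level statement you need; invoking it does not produce the required compatibility of $\Phi$ with the $K=JL(\cO)$ factorization. Also, a minor but real friction you are underestimating: the realization of $\Pi$ used to write the explicit $\Phi$ (as functions left-equivariant under $P^\circ_I=I_L U_P^+(\cO)$) is not the iterated-parabolic-induction model, so ``transitivity'' would first need to be reconciled with this realization before you can compare projectors. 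The paper sidesteps all of this with one clean observation that your proposal is missing: the averaged generator $f_0^c=|K/J|^{-1}\mu\cdot\charr(K)$ is not only the \emph{left} $L(\cO)$-symmetrization of $f_0$ but also its \emph{right} $L(\cO)$-symmetrization. Since $\Phi$ is $G(F)$-equivariant for the right action, it follows that $\Phi(f_0^c)$ equals the right $L(\cO)$-symmetrization of $a_0=\Phi(f_0)$, which one checks directly equals $a_0^c$ (using $|P^\circ/P^\circ_I|=|K/J|$). Because $\sW$ is generated by $f_0$ under the right $G(F)$-action and both averaging maps are $G(F)$-equivariant and surjective onto $\sV$ and $\Theta$, this single identity propagates to $\Phi(f^c)=\Phi(f)^c$ for all $f$, giving $\Phi(\sV)=\Theta$ and hence the isomorphism. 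Without an observation playing this role, your proposal records the correct target but does not supply the argument.
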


We prove the above theorem by identifying $\sV$ and $\Theta$ with submodules of $\sW$ and $\Pi$, respectively. Then, using the explicit description of $\Phi$ in \eqref{eq:IntroPhi}, we show that $\Phi\, |_\sV: \sV\ra \Pi$ defines an isomorphism onto $\Theta$. On the other hand, the endomorphism ring $\sV$ identifies with $\sH(G(F),
K,\mu)$. Thus, to prove Theorem \ref{t:Satake}, we need to compute the endomorphism algebra of $\Theta$. To this end, we will use a
theorem of Roche \cite{Roche02} on parabolic induction of Bernstein
blocks. 

\begin{remark} 
\begin{enumerate} 
\item[(i)] As mentioned above, in this paper, we construct the pair $(K,\mu)$ satisfying requirement of Problem \ref{p:main} by using Roche's pair $(J,\mu^J)$. In this case, the subgroup $K$ depends only on the kernel of $\bmu$; that is, if $\ker(\bmu)=\ker(\bmu')$ then $K_{\bmu}=K_{\bmu'}$. In fact, it only depends
on the conductors of the restrictions of $\bmu$ to the coroot subgroups (i.e., the minimal $c_\alpha \geq 1$ such that $\bmu|_{\alpha^\vee(1+\fp^{c_\alpha})}$ is trivial) together with the collection of roots $\alpha$ such that the entire
restriction $\bmu|_{\alpha^\vee(\cOt)}$ is trivial.
This follows immediately from the construction of $J$; see \S \ref{s:K}. 

\item[(ii)] The pair $(J,\mu^J)$ is a \emph{type} for the Bernstein
  block $\sR_\bmu(G)$. Types for Bernstein blocks are not, however,
  necessarily unique. Therefore, it is natural to wonder if our
  construction could work using a different type $(J',\mu^{J'})$. In
  the case $G = \GL_N$, this is true in view of the results of
  \cite{Howe73}, as we observed (for $N=3$) in Example
  \ref{ex:GLN}. We do not, however, pursue this question in the
  current text.
\end{enumerate} 
\end{remark}

\subsection{Further directions} \label{ss:directions}
The proof of Theorem \ref{t:Satake} given in this paper is rather
indirect; moreover, it relies on nontrivial results of Bernstein,
Bushnell and Kutzko, Roche, and Dat.  In a forthcoming paper
\cite{ramified2}, we hope to give a direct proof of this theorem by
writing an explicit support preserving isomorphism $\sH(L(F), L(\cO),
\mu^L) \isom \sH(G(F), K,\mu)$. In other words, we hope to prove  Theorem \ref{t:Satake} using combinatorics and the classical Satake isomorphism. This proof should also make clear
the geometric nature of the group $K$ and some of its double cosets in
$G(F)$; in particular, we expect that it will help with the
geometrization program (see below).

In Definition \ref{d:central}, for strongly parabolic characters of the compact torus, we constructed ``central families". The endomorphism ring of the central family identifies canonically with the center of the block defined by the character; moreover, generic irreducible representations in the block appear with multiplicity one in the central family. It would be interesting to find analogous central families for other Bernstein blocks.   

It is well-known that the Satake isomorphism allows one to realize the local unramified Langlands correspondence. In more detail, 
let $\hG$ denote the complex reductive group which is the Langlands
dual of $G$. Using the classical Satake isomorphism \eqref{eq:classicalSatake}, one can show that we have a
bijection
\[
\boxed{\textrm{unramified irreducible representations of $G(F)$}} \leftrightarrow
\boxed{\textrm{characters of $\sH$}}\]
Combining this with the bijections 
\[
\boxed{\textrm{characters of $\sH$}}
 \leftrightarrow
\boxed{\textrm{points of $\hT/W$}} \leftrightarrow
\boxed{\textrm{semisimple conjugacy classes in $\hG$}}
\] 
we obtain a bijection between unramified representations of $G(F)$ and semisimple conjugacy classes in $\hG$. It would be interesting to
study the role of the ramified Satake isomorphisms (i.e., the ones
given by Theorem \ref{t:Satake}) in the local Langlands program.

In \cite{Haines10}, a version of the Satake isomorphism for non-split
groups is proved. On the other hand, there is also now a Satake
isomorphism in characteristic $p$; see \cite{Herzig}. 
We expect that there is also a version of Theorem
\ref{t:Satake} for non-split groups and one in characteristic $p$.

Finally, we expect that there is a geometric version of Theorem
\ref{t:Satake}. The geometric version of the usual Satake isomorphism
is proved by Mirkovic and Vilonen \cite{MV}, completing a project
initiated by Lusztig, Beilinson and Drinfeld, and Ginzburg. In the
case of regular characters; i.e., in the case that the stabilizer of
the character in the Weyl group is trivial, a geometric version of
Theorem \ref{t:Satake} is proved in \cite{geometrization}. In
\cite[\S 1.4]{geometrization}, we conjectured the theorems proved in this
article; moreover, we formulated precise conjectures for geometrizing
these results. We hope to return to this theme in future work.

\subsection{Acknowledgements} 
We would like to thank Alan Roche for very helpful email
correspondence. He sketched proofs of several technical results for
us; moreover, he brought to our attention the references
\cite{Roche02}, \cite{Dat99}, and \cite{Blondel}.  We would like to
thank J. Adler for reading an earlier draft and several useful
discussions.  We also thank Loren Spice for helpful conversations. We
thank V. Drinfeld for pointing out to us Proposition
\ref{p:drinfeldMorphism} which plays a crucial role in this paper and
sharing with us his notes on easy algebraic groups. Finally, we thank
the Max Planck Institute for Mathematics in Bonn for its hospitality.

%%%%%%%%%%%%%%%%%%%%%%%%%%%%%%%%%%%%%%%%%%%%%%%%%%%%%%%%%%%%%%%%%%%%%%%%%%%%%%%%%%%%%%%%%%%%%%%%%%%%%%%%%%%%%%%%%%%%%%%%

\section{Parabolic, strongly parabolic, and easy characters} 
\label{s:characters}
\subsection{Conventions}\label{ss:conventions}
Let $F$ be a local field with ring of integers $\cO$, unique maximal
ideal $\fp$, residue field $\Fq$, and uniformizer $t$. Let $G$ be a
connected split reductive group over $F$ with $F$-split torus
$T$. Replacing $G$ if necessary by an $F$-isomorphic group, we may
(and do) assume that $G$ and $T$ are defined over $\bZ$.
Let $W=N_G(T)/T$ denote the Weyl group. 

Let $\Delta=\Delta_G$  denote the roots of $G$ (with respect to $T$). For
$\alpha$ a root in $\Delta$, we write $\alpha^\vee$ for the
corresponding coroot.  Let  ${X} =
\Hom(T,\bbG_m)$ and ${X^\vee} = \Hom(\Gm,T)$  denote the the character and cocharacter lattices,
respectively. Let $Q \subseteq {X}$ be the root lattice, and let
$Q^\vee \subseteq {X^\vee}$ denote the coroot lattice.  Let $\tQ$ be the saturation of $Q^\vee$ in
  ${X^\vee}$, i.e., 
  \[
  \tQ = \{\lambda \in X^\vee \mid m \cdot
  \lambda \in Q^\vee, \text{some } m \in \bZ\}.
  \] By definition ${X^\vee}/\tQ$ is a torsion free abelian group. To an element
$\lambda \in {X^\vee}$, we associate
$t^\lambda = \lambda(t) \in T(F)$. 

For every $\alpha \in \Delta$, let $u_\alpha: \bbG_a \to G$ be the
one-parameter root subgroup, where $\bbG_a$ is the additive group. We
assume these root subgroups satisfy the conditions specified in
\cite[\S 2]{Roche98}.  Let $U_\alpha < G$ be the image of
$u_\alpha$. For all $i \in \bZ$, let $U_{\alpha,i} = u_\alpha(\fp^i) <
G(F)$. In particular, $U_{\alpha,0}=u_\alpha(\cO)$.

Let $H$ and $K$ be topological groups and suppose $H<K$. Let
$\chi:H\ra \bCt$ be a character of $H$. We write $\ind_H^{K} \chi$ for
the space of left $(H,\chi)$-invariant relatively compactly supported
functions on $K$; that is, those functions $f:K\ra \bC$ whose support
has compact image in $K/H$ and satisfy $f(hk)=\chi(h)f(k)$ for all
$h\in H$ and $k\in K$. The group $K$ acts on this space by right
translation.

\subsection{$W$-invariant rational characters} \label{ss:Iwahori}
We start this section with a general lemma which we will repeatedly use below. 

\begin{lemma}\label{l:char-ext}
  Let $H$ be a group and $K < H$ a subgroup.  Then a character $\chi:K
  \to \bCt$ extends to a character of $H$ if and only if $\chi|_{K
    \cap [H,H]}$ is trivial.  The same is true if $H$ is an l-group
  (i.e., a locally compact totally disconnected Hausdorff topological
  group), and $K$ is a closed subgroup.\footnote{We don't need to
    assume that $H$ is totally disconnected, if we use the fact from
    \cite[Corollary 7.54]{HM-scg} that every locally compact Hausdorff
    topological group contains a compact subgroup $H'$ such that the
    quotient $H/H'$ is isomorphic to $\bR^n \times D$ for a discrete
    group $D$.}  Finally, the same is true if $H$ is a connected split
  reductive algebraic group, $K$ is a closed subgroup, and $\chi:
  K\ra \bbG_m$ is a rational character.
  % \footnote{The reductive assumption is not necessary, since the
  % unipotent radical will always be in the kernel of any rational
  % character $G \to \bbG_m$.}
\end{lemma}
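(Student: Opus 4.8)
The statement has three parts of increasing generality; I would prove the first (abstract groups) directly, then bootstrap the topological and algebraic versions from it. For the abstract case: one direction is trivial, since if $\chi$ extends to $\tilde\chi:H\to\bCt$ then $\tilde\chi$ kills $[H,H]$ (any homomorphism to an abelian group does), hence $\chi|_{K\cap[H,H]}=\tilde\chi|_{K\cap[H,H]}=1$. For the converse, suppose $\chi|_{K\cap[H,H]}=1$. Then $\chi$ descends to a character of $K/(K\cap[H,H])\cong K[H,H]/[H,H]$, which is a subgroup of the abelian group $H/[H,H]=H^{\ab}$. Since $\bCt$ is a divisible abelian group, it is an injective object in the category of abelian groups, so the character $K[H,H]/[H,H]\to\bCt$ extends to a character $H^{\ab}\to\bCt$; pulling back along $H\onto H^{\ab}$ gives the desired extension of $\chi$.

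\textbf{Topological case.} Now let $H$ be an l-group (or, using the footnote's structure theorem, any locally compact Hausdorff group) and $K$ a closed subgroup, $\chi:K\to\bCt$ continuous. The obstacle here is that the purely algebraic extension produced above need not be continuous, and $\bCt$ is not injective in the category of \emph{topological} abelian groups. I would instead argue: the continuous character $\chi$ factors through $K/\overline{K\cap[H,H]}$ — in fact through $K/(K\cap\overline{[H,H]})$ — and one needs $\bCt$, or rather its compact subgroup the circle $\mathbb{T}=\{z:|z|=1\}$, to be an injective object for continuous extensions of closed-subgroup characters of the relevant class of groups. For compact abelian groups this is exactly Pontryagin duality: $\mathbb{T}$ is injective in the category of locally compact abelian groups (equivalently, every closed subgroup of a LCA group is ``dualizable''), so a continuous character of a closed subgroup of a LCA group extends continuously. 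In general $H$ need not be abelian, but $\chi$ kills $\overline{K\cap[H,H]}$, hence factors through the image of $K$ in the abelian group $A:=H/\overline{[H,H]}$, which is a Hausdorff LCA group; the image of $K$ in $A$ is a subgroup whose closure $\bar K$ is a closed subgroup of $A$. By continuity $\chi$ extends to $\bar K$ (it is uniformly continuous and $\bar K$ is the completion), and then by Pontryagin injectivity it extends to $A$, and pulling back along $H\onto A$ gives a continuous character of $H$ extending $\chi$. Checking that $\chi$ really does factor through $K/(K\cap\overline{[H,H]})$ uses the hypothesis $\chi|_{K\cap[H,H]}=1$ together with continuity; this is the one place the topology is essential and is, I expect, the main technical point of this part.

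\textbf{Algebraic case.} Finally let $H$ be a connected split reductive algebraic group, $K\le H$ a closed (algebraic) subgroup, and $\chi:K\to\bbG_m$ a rational character with $\chi|_{K\cap[H,H]}$ trivial. Since $H$ is reductive and connected, the derived group $[H,H]$ is a closed connected semisimple normal subgroup and $H/[H,H]$ is a torus $S$; the quotient map $q:H\onto S$ is a morphism of algebraic groups. As before $\chi$ factors through $K/(K\cap[H,H])$, which maps to $S$; let $K'=q(K)\le S$, a closed subgroup of the torus $S$ (image of an algebraic group homomorphism is closed). So it suffices to show a rational character of a closed subgroup $K'$ of a torus $S$ extends to a rational character of $S$. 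Writing $S\cong\bbG_m^n$, its character lattice is $\bZ^n$; a closed subgroup $K'$ is a diagonalizable group, its character group is a finitely generated abelian group obtained as a quotient of $\bZ^n$ (dually, $X^*(S)\onto X^*(K')$ is surjective — this is the standard fact that a closed subgroup of a diagonalizable group is diagonalizable with surjective restriction of characters, see e.g.\ SGA3 or Borel's book). Hence any $\chi\in X^*(K')$ lifts to some $X^*(S)$, i.e.\ extends to a rational character of $S$; composing with $q$ gives the desired rational character of $H$. The only subtlety is the identification of $\chi$ as a genuine element of $X^*(K')$ rather than of $X^*(K)$ — i.e.\ that $\chi$, being trivial on $K\cap[H,H]=\ker(q|_K)$, descends to $q(K)$ as a \emph{rational} (not merely abstract) character — which follows because $K\to q(K)$ is a faithfully flat morphism of algebraic groups, so characters of $q(K)$ are exactly the $K\cap[H,H]$-invariant characters of $K$.

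I expect the genuinely delicate step to be the topological case: producing a \emph{continuous} extension rather than merely an algebraic one, which is why the proof must route through Pontryagin duality for LCA groups (and the structure theorem in the footnote, to reduce a general locally compact group to the totally disconnected-modulo-$\mathbb R^n$ situation) rather than invoking divisibility of $\bCt$ as in the abstract case.
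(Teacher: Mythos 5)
Your abstract case matches the paper's exactly (reduce to $H^{\ab}$, use divisibility of $\bCt$), and your algebraic case is essentially the paper's argument as well, though you are careful to let the image of $K$ in the quotient torus be an arbitrary diagonalizable closed subgroup rather than a subtorus; the paper's phrasing ``where $H$ and $K$ are connected split tori'' tacitly assumes more than the lemma states, so your SGA3-style surjectivity of $X^*(S)\to X^*(K')$ is actually the more faithful route. However, the topological case has a genuine gap. You invoke Pontryagin duality, which says that $S^1$ is injective in the category of locally compact abelian groups, and conclude that the continuous character of the closed subgroup extends. But the lemma is about characters valued in $\bCt$, not in $S^1$, and continuous homomorphisms to $\bCt$ need not be unitary. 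The parenthetical ``$\bCt$, or rather its compact subgroup the circle'' quietly substitutes a different target group, and your argument never returns to supply the missing piece. The paper handles this explicitly: it splits $\bCt \cong S^1 \times \bR_{>0}$, uses Pontryagin duality for the $S^1$-component exactly as you do, and then treats the $\bR_{>0}$-component separately by observing that a compact group has no nontrivial continuous characters to $\bR_{>0}$, so (an l-group having a compact open subgroup) the $\bR_{>0}$-part of the character factors through a discrete quotient, where injectivity again follows from divisibility. That second half is what your proposal is missing.

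A secondary remark: you flag, without resolving, the claim that $\chi$ trivial on $K\cap[H,H]$ forces $\chi$ trivial on $K\cap\overline{[H,H]}$, and you correctly sense trouble there. Continuity only gives triviality on $\overline{K\cap[H,H]}$, which can be a proper subgroup of $K\cap\overline{[H,H]}$ even when $K$ is closed; this claim is not part of the paper's proof, and you should not build your reduction on it. The paper instead reduces to abelian $H$ first (passing $\chi$ through $K/(K\cap[H,H])\hookrightarrow H/[H,H]$) and only then does the topological work, which avoids having to compare those two subgroups. Rearranging your argument to make that reduction first, and then adding the $\bR_{>0}$ step, would close both gaps.
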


\begin{proof}
  It is clear that the assumption that $\chi$ be trivial on $K \cap
  [H,H]$ is necessary.  Conversely, if this is true, extending the
  character is the same as extending the induced character of $K / (K
  \cap [H,H])$ to $H/[H,H]$. Therefore, all the statements of the lemma reduce to
  the case that $H$ is commutative.

  Then, the statement that any character of a subgroup of an abstract
  (discrete) abelian group extends to the entire group follows from
  the fact that $\bCt$ is divisible, and hence injective.

For the locally compact analogue, write $\bCt \cong S^1 \times \bR_{>
  0}$. For characters to $S^1$, the statement follows from Pontryagin
duality.  For $\bR_{>0}$, note first that, if $H$ is compact, then
there are no nontrivial continuous characters to $\bR_{> 0}$. As an l-group
 always contains a compact open subgroup, this
reduces the problem to the case $H$ is discrete, where it follows as
in the previous paragraph, since $\bR_{> 0}$ is divisible, and hence
injective, as a discrete abelian group.

For the algebraic analogue, i.e., where $H$ and $K$ are connected
split tori, the statement follows because applying $\Hom(-,\bbG_m)$ to
a short exact sequence $1 \to K \to H \to H/K \to 1$ of split tori is
well-known to be an equivalence of short exact sequences of split tori
with that of their weight lattices. Hence, the restriction map from
characters of $H$ to characters of $K$ is surjective.
\end{proof}
\begin{lemma} \label{l:rationalChar} Let $G$ be a connected split
  reductive algebraic group over a field $k$ with split torus $T$. Let
  $\chi:T\ra \bbG_m$ be a rational character. The following are
  equivalent:
\begin{enumerate} 
\item $\chi$ is trivial on $T\cap [G,G]$.
\item $\chi$ extends to a character $G\ra \bbG_m$; 
\item $\chi$ is $W$-invariant; 
\item $\chi\circ \alpha^\vee: \bbG_m\ra \bbG_m$ is trivial, for every
  $\alpha \in \Delta$.
\end{enumerate} 
\end{lemma}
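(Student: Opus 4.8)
The plan is to prove the cycle of equivalences $(1)\Leftrightarrow(2)$, $(1)\Leftrightarrow(3)$, $(1)\Leftrightarrow(4)$ for a rational character $\chi:T\to\bbG_m$ of the split torus in a connected split reductive group $G$ over a field $k$. The equivalence $(1)\Leftrightarrow(2)$ is immediate from the third (algebraic) statement of Lemma~\ref{l:char-ext}: a rational character of the closed subgroup $T<G$ extends to all of $G$ if and only if it is trivial on $T\cap[G,G]$. So the real content is in tying these to $(3)$ and $(4)$.

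For $(2)\Rightarrow(3)$: if $\chi$ extends to a character $\tilde\chi:G\to\bbG_m$, then since $\bbG_m$ is abelian, $\tilde\chi$ kills $[G,G]$ and in particular is invariant under conjugation by $N_G(T)$; restricting back to $T$ shows $\chi$ is $W$-invariant. For $(3)\Rightarrow(4)$: for each root $\alpha$, the simple reflection $s_\alpha$ acts on $X=\Hom(T,\bbG_m)$ by $s_\alpha(\chi)=\chi-\langle\chi,\alpha^\vee\rangle\alpha$, so $W$-invariance of $\chi$ forces $\langle\chi,\alpha^\vee\rangle=0$, which says precisely that $\chi\circ\alpha^\vee:\bbG_m\to\bbG_m$ is the trivial character (it is the character $z\mapsto z^{\langle\chi,\alpha^\vee\rangle}$). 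For $(4)\Rightarrow(3)$: since $W$ is generated by the simple reflections $s_\alpha$ and each $s_\alpha$ fixes $\chi$ exactly when $\langle\chi,\alpha^\vee\rangle=0$, condition $(4)$ gives $W$-invariance. Finally, $(4)\Rightarrow(1)$ (equivalently, combined with the above, $(3)\Rightarrow(1)$): the group $T\cap[G,G]$ is generated by the images of the coroots $\alpha^\vee(\bbG_m)$ as $\alpha$ ranges over $\Delta$ — more precisely, $[G,G]$ is generated by the root subgroups $U_\alpha$, its maximal torus $T\cap[G,G]$ is the subtorus with cocharacter lattice $Q^\vee$, which is spanned by the $\alpha^\vee$ — so a character vanishing on every $\alpha^\vee(\bbG_m)$ vanishes on $T\cap[G,G]$. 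One must be slightly careful here about whether "generated" is as an abstract group or needs a Zariski-closure/connectedness argument, but since $T\cap[G,G]$ is a (connected) torus with cocharacter lattice exactly $Q^\vee$, and $\chi$ restricted to it corresponds to the image of $\chi$ under $X\to\Hom(Q^\vee,\bZ)$, triviality on all $\alpha^\vee$ is exactly triviality of that image. That closes the loop: $(1)\Rightarrow(2)\Rightarrow(3)\Rightarrow(4)\Rightarrow(1)$.

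The main obstacle — really the only nontrivial point — is the identification of $T\cap[G,G]$ with the subtorus of $T$ corresponding to the sublattice $Q^\vee\subseteq X^\vee$, and the dual statement that characters of $T\cap[G,G]$ are computed by pairing with $Q^\vee$. This is standard structure theory of reductive groups (the derived group is semisimple with the same roots/coroots, $T\cap[G,G]$ is a maximal torus of $[G,G]$, and its character lattice is $X/(\text{annihilator of }Q^\vee)$), so I would simply cite it rather than reprove it; but it is worth stating carefully since everything hinges on it. If one wants to avoid even mentioning $[G,G]$'s torus explicitly, an alternative for $(4)\Rightarrow(1)$ is to note that $T\cap[G,G]$ is generated as a group by the subgroups $\alpha^\vee(k^\times)$ together with — if needed — passing to a field extension where these generate a Zariski-dense subgroup; a rational character trivial on a Zariski-dense subgroup is trivial. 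Either route works; I would present the cocharacter-lattice version as cleanest.

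I would organize the writeup as: first dispatch $(1)\Leftrightarrow(2)$ by quoting Lemma~\ref{l:char-ext}; then prove $(2)\Rightarrow(3)\Rightarrow(4)$ by the elementary reflection computation above; then prove $(4)\Rightarrow(1)$ via the $Q^\vee$-identification of $T\cap[G,G]$. This gives all four equivalences with no redundancy. No step requires more than a few lines, so the proof should be short.
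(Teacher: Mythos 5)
Your overall route is the same as the paper's: dispatch $(1)\Leftrightarrow(2)$ via Lemma \ref{l:char-ext}, deduce $(3)$ from $(2)$ by restricting a character of $G$ to $T$, deduce $(4)$ from $(3)$ by a root-reflection computation, and close the loop via the structure of $T\cap[G,G]$.

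There is, however, one inaccuracy in the last step that is worth fixing. You assert that $T\cap[G,G]$ is the subtorus of $T$ with cocharacter lattice \emph{exactly} $Q^\vee$. That is false in general: the cocharacter lattice of $T\cap[G,G]$ (as a subtorus of $T$) is the \emph{saturation} $(Q^\vee)_{\mathrm{sat}}$ of $Q^\vee$ in $X^\vee$, not $Q^\vee$ itself. The paper is careful about exactly this point (see \eqref{eq:corootgenerateT} and the accompanying footnote). A concrete counterexample to your claim: for $G=\mathrm{PGL}_2$, one has $[G,G]=G$ and $T\cap[G,G]=T$ with cocharacter lattice $X^\vee\cong\bZ$, whereas $Q^\vee=2\bZ$. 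Your conclusion — that $\chi$ vanishes on $T\cap[G,G]$ iff $\langle\chi,\alpha^\vee\rangle=0$ for all coroots — is nevertheless correct, but the reason is that $\bZ$ is torsion-free: if $\langle\chi,\lambda\rangle=0$ for all $\lambda\in Q^\vee$ and $m\lambda'\in Q^\vee$ for some $m>0$, then $m\langle\chi,\lambda'\rangle=0$ forces $\langle\chi,\lambda'\rangle=0$. Equivalently, the restriction map $\Hom((Q^\vee)_{\mathrm{sat}},\bZ)\to\Hom(Q^\vee,\bZ)$ is injective (in fact an isomorphism, cf.\ Remark \ref{r:Winvariant}). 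So either state the cocharacter lattice correctly as $(Q^\vee)_{\mathrm{sat}}$ and add the one line about torsion-freeness, or avoid the scheme-theoretic identification altogether by passing to $\bar k$-points where the coroot images genuinely generate $T(\bar k)\cap[G,G](\bar k)$.

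One minor stylistic difference: for $(3)\Rightarrow(4)$ the paper computes $(s_\alpha\cdot\chi)\circ\alpha^\vee=\chi\circ(-\alpha^\vee)$, deduces $(\chi\circ\alpha^\vee)^2=1$, and uses that $\bbG_m$ has no nontrivial character of order two; you invoke the reflection formula $s_\alpha(\chi)=\chi-\langle\chi,\alpha^\vee\rangle\alpha$ directly. Both are correct and of comparable length; the paper's variant avoids quoting the reflection formula explicitly, but yours is arguably the more standard phrasing.
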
 

\begin{proof} Lemma \ref{l:char-ext} implies immediately that $(1)\implies (2)$. Next, 
  it is clear that $[N_G(T),T]\subseteq [G,G]\cap T$; therefore, if
  we restrict a character of $G$ to $T$, we obtain a character which
  is invariant under the conjugation action of $N_T(G)$. This proves
  (2) $\implies$ (3). Next, suppose $\chi$ is $W$-invariant. Then
\begin{equation}\label{eq:roots}
  \chi\circ \alpha^\vee = (s_\alpha.\chi)\circ \alpha^\vee = 
  \chi\circ (-\alpha^\vee) = (\chi\circ \alpha)^{-1} 
\end{equation}
It follows that $(\chi\circ \alpha^\vee)^2=1$. Since $\bbG_m$ has no
nontrivial character of order $2$, it follows that $\chi\circ
\alpha^\vee=1$. Hence, (3) $\implies$ (4). For the final implication, we use the canonical identification 
  \begin{equation} \label{eq:corootgenerateT}
  T\cap [G,G] =  \bbG_m \otimes_\bZ \tQ. 
    \end{equation}
    By the notation on the RHS we mean the group subscheme of $T$
    whose $R$ points equals $R^\times \otimes_\bZ \tQ$, where $R$ is a
    ring over
    $k$.\footnote{
      For a proof of this statement over an algebraically closed field
      see, for instance, \cite{DigneMichel}, \S 0.20. Note that over
      an algebraically closed field, one does not need to use $\tQ$;
      more precisely, we have $\bar k^\times \otimes_{\bZ} Q^\vee =
      T(\bar{k})\cap [G,G](\bar{k})=(T\cap [G,G])(\bar{k})$.}  Now if
    $\chi\circ \alpha^\vee$ is trivial for every $\alpha\in \Delta$,
    then $\chi$ is trivial on $T\cap [G,G]$. This proves (4)
    $\implies$ (1).
\end{proof}

\begin{remark} \label{r:Winvariant}
It follows from the above lemma that the group of characters of $T$ which satisfy the above equivalent conditions is canonically isomorphic to 
\begin{equation}\Hom(T/(T\cap [G,G]), \bbG_m) \simeq X^W \simeq \Hom({X^\vee}/Q^\vee,\bZ) \simeq \Hom({X^\vee}/\tQ, \bZ). 
\end{equation} 
The last isomorphism follows from the following: the quotient
${X^\vee}/Q^\vee \onto {X^\vee}/\tQ$ splits, since ${X^\vee}/\tQ$ is free, and
the resulting pullback maps $\Hom({X^\vee}/Q^\vee, \bZ) \leftrightarrow
\Hom({X^\vee}/\tQ, \bZ)$ are inverse to each other since the quotient
${X^\vee}/Q^\vee \onto {X^\vee}/\tQ$ has finite kernel and $\bZ$ is torsion-free. 
(More generally, for any finite-kernel quotient of finitely-generated abelian groups, the pullback map on $\Hom(-,\bZ)$ is an isomorphism.)
\end{remark}

\subsection{Easy $W$-invariant characters}
Let $G$ be a reductive group defined over $\bZ$. 
 Let $\Hom_\sm(\cOt, \bCt)$ denote the group of smooth characters
$\cOt \to \bCt$.

\begin{proposition}  \label{p:easy}
The following conditions are equivalent
for a smooth character $\bmu: T(\cO) \ra \bCt$:
\begin{itemize}
\item[(i)] The restriction $\bmu|_{([G,G] \cap T)(\cO)}$ is trivial;
\item[(ii)] The character $\bmu$ is a product of compositions of
  $W$-invariant rational characters $T(\cO)\ra \cOt$ with smooth characters
  $\cOt\ra \bCt$.
\end{itemize}
\end{proposition}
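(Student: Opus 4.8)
The plan is to prove the equivalence by reducing everything, via Lemma \ref{l:char-ext} and Lemma \ref{l:rationalChar}, to the level of lattices, where the statement becomes a concrete claim about writing a character of a sublattice as a sum of $W$-invariant characters (each post-composed with a smooth character of $\cOt$). The direction $(ii)\implies(i)$ is the easy one: if $\bmu = \prod_i \psi_i\circ\chi_i$ with $\chi_i:T(\cO)\to\cOt$ a $W$-invariant rational character and $\psi_i:\cOt\to\bCt$ smooth, then by Lemma \ref{l:rationalChar} each $\chi_i$ is trivial on $(T\cap[G,G])(\cO)$ (being trivial on $T\cap[G,G]$ as an algebraic group, it is a fortiori trivial on the $\cO$-points), hence so is each $\psi_i\circ\chi_i$ and hence so is their product $\bmu$. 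The substance is in $(i)\implies(ii)$.

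For $(i)\implies(ii)$, I would argue as follows. The rational characters of $T$ that are trivial on $T\cap[G,G]$ are, by Remark \ref{r:Winvariant}, precisely the group $X^W \simeq \Hom(X^\vee/\tQ,\bZ)$; restricting to $T(\cO)$, post-composing with smooth characters $\cOt\to\bCt$, and multiplying, the characters of $T(\cO)$ of the form in (ii) are exactly those lying in the image of the natural map $\Hom(X^\vee/\tQ,\bZ)\otimes_\bZ \Hom_\sm(\cOt,\bCt)\to \Hom_\sm(T(\cO),\bCt)$, which I claim factors through an identification with $\Hom_\sm\big((T/(T\cap[G,G]))(\cO),\bCt\big)$. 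Concretely, using the decomposition $T(\cO)\simeq \cOt\otimes_\bZ X^\vee$ (the $\cO$-points of the torus $T = \bbG_m\otimes_\bZ X^\vee$) and $(T\cap[G,G])(\cO)\simeq \cOt\otimes_\bZ \tQ$ (the $\cO$-analogue of \eqref{eq:corootgenerateT}; here one uses $q>\,$the relevant bound, or rather that $\cOt$ is divisible-enough, to pass from $Q^\vee$ to its saturation $\tQ$ — more precisely, one uses that $X^\vee/\tQ$ being free makes $\cOt\otimes(-)$ exact on the sequence $0\to\tQ\to X^\vee\to X^\vee/\tQ\to 0$), one gets a short exact sequence
\begin{equation}
1 \to \cOt\otimes_\bZ\tQ \to \cOt\otimes_\bZ X^\vee \to \cOt\otimes_\bZ (X^\vee/\tQ) \to 1.
\end{equation}
A smooth character $\bmu$ is trivial on the first term precisely when it factors through the last term $\cOt\otimes_\bZ(X^\vee/\tQ)$; and since $X^\vee/\tQ$ is free of finite rank, this last group is just $(\cOt)^r$, so any smooth character of it is manifestly a product of smooth characters $\cOt\to\bCt$ composed with the coordinate projections — and each coordinate projection, pulled back to $T(\cO)$, is the restriction of a $W$-invariant rational character by Remark \ref{r:Winvariant}. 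This is exactly the description in (ii).

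The main obstacle I expect is the bookkeeping around $\tQ$ versus $Q^\vee$: the decomposition $(T\cap[G,G])(\cO)\simeq \cOt\otimes_\bZ\tQ$ must be justified over $\cO$ (not just over an algebraically closed field, where $Q^\vee$ suffices), and one must check that the functor $\cOt\otimes_\bZ(-)$ is exact on the relevant sequence — which is where freeness of $X^\vee/\tQ$ is essential — and that the smooth/continuous topology is respected throughout (i.e. that the algebraic splitting $X^\vee/Q^\vee\onto X^\vee/\tQ$ from Remark \ref{r:Winvariant} induces a splitting at the level of $\cO$-points compatible with smoothness of characters). Once the exact sequence above is established with its $\cOt$-module structure, the rest is essentially the observation that a smooth character of $(\cOt)^r$ is a product of smooth characters of the factors, which is immediate.
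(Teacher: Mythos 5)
Your proposal is correct and follows essentially the same route as the paper: reduce via the identification $T(\cO)\simeq \cOt\otimes_\bZ X^\vee$, $(T\cap[G,G])(\cO)\simeq \cOt\otimes_\bZ\tQ$ to a character of the quotient $\cOt\otimes_\bZ(X^\vee/\tQ)$, use freeness of $X^\vee/\tQ$ to split this as $(\cOt)^r$, and invoke Remark \ref{r:Winvariant} to recognize the coordinate projections as $W$-invariant rational characters. (One small clarification: the identification $(T\cap[G,G])(\cO)\simeq\cOt\otimes_\bZ\tQ$ needs no hypothesis on $q$ — it is the $\cO$-points of the scheme-theoretic equality \eqref{eq:corootgenerateT}, which holds over any base; the use of $\tQ$ rather than $Q^\vee$ is already built into that statement.)
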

\begin{remark}The same statement and proof holds when $\cO$ is replaced by any (topological) ring.
\end{remark}

\begin{definition} \label{d:easy} A smooth $W$-invariant character $\bmu:T(\cO)\ra
  \bCt$ is \emph{easy} (with respect to $G$) if the equivalent
  conditions of Proposition \ref{p:easy} are satisfied.
\end{definition}

\begin{remark}\label{r:easy}
By Lemma \ref{l:rationalChar} and Proposition \ref{p:easy}, the group of easy characters $T(\cO)$ identifies canonically with 
 \begin{multline*}
 \Hom_\sm( (T/(T\cap [G,G]))(\cO), \bCt)\simeq \\
  \Hom({X^\vee}/\tQ, \bZ)\otimes_\bZ \Hom_\sm(\cOt, \bCt) \simeq  \Hom_\sm({X^\vee}/\tQ \otimes_{\bZ} \cOt, \bCt). 
\end{multline*}
 The last isomorphism follows from the fact that $X^\vee/\tQ$ is free. 
 \end{remark}

\begin{proof}[Proof of Proposition \ref{p:easy}]
  The implication (ii) $\Rightarrow$ (i) is immediate from Lemma
  \ref{l:rationalChar}. For the reverse implication, note that by
  assumption $\bmu$ is a character of $T(\cO)=(\bbG_m \otimes_\bZ
  X^\vee)(\cO)$ which is trivial on $([G,G] \cap T)(\cO)=(\bbG_m
  \otimes_{\bZ} \tQ)(\cO)$. Therefore $\bmu$ is canonically a
  character of
  \begin{multline*}
  (\bbG_m \otimes_\bZ X^\vee)(\cO)/(\bbG_m \otimes_{\bZ} \tQ)(\cO) = \\
  ((\bbG_m \otimes_\bZ X^\vee)/ (\bbG_m \otimes_{\bZ} \tQ)) (\cO) = (\bbG_m\otimes_\bZ X^\vee/\tQ)(\cO) = 
  (X^\vee/\tQ)\otimes_\bZ \cOt.
  \end{multline*}
  We conclude that $\bmu$ is a product of
  compositions of (smooth) characters of $\cOt$ with rational
  characters $\Hom(X^\vee/\tQ,\bZ)$. By Remark \ref{r:Winvariant}, the group of such rational characters is canonically
  isomorphic to the sublattice $X^W$ of $W$-invariant
  rational characters. Therefore, $\bmu$ has the form claimed in Part (ii).  
  \end{proof}

  Note that, if $\bmu$ is easy, then Lemma \ref{l:rationalChar}
  implies that $\bmu$ extends to a character of $G(F)$, and hence of
  $G(\cO)$. As the following example illustrates, the converse is not,
  in general, true.

\begin{exam}\label{ex:strongpar-not-easy}
  Let $G=PGL_2$.  Then the determinant map $\GL_2(F) \to F^\times$
  descends to a map $G(F) \to F^\times / (F^\times)^2 \cong
  \{\pm 1\} \times t^{X^\vee}/t^{2X^\vee}$. Take the composition and the further quotient by the second factor, and view it as a character $G(F) \to \bCt$ (which 
is trivial on $t^{X^\vee}$).  The restriction of this character to $T(\cO)$
is nontrivial, 
 even though there are no nonzero $W$-invariant rational
  characters (and hence non nontrivial easy characters).
\end{exam}
Nonetheless, in the next subsection, we give a combinatorial
description of all characters of $T(\cO)$ which extend to characters
of $G(F)$, similar to the description of easy characters above.

\subsection{Extendable $W$-invariant characters} 

\begin{proposition}\label{p:strongpar} 
The following conditions are equivalent
for a smooth $W$-invariant character $\bmu: T(\cO) \ra \bCt$:
\begin{enumerate}
\item[(a)] $\bmu$ extends to a character of $G(F)$;
\item[(b)] For all $\alpha \in \Delta$, we have 
 \begin{equation} \label{e:bmualpha}
 \bmu\circ \alpha^\vee |_{\cOt} = 1.
 \end{equation} 
\end{enumerate}
If in addition $q > 2$, then these are also equivalent to
\begin{enumerate}
\item[(c)] $\bmu$ extends to a character of $G(\cO)$.
\end{enumerate}
\end{proposition}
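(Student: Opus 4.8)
The plan is to prove the cyclic chain of implications $(a) \Rightarrow (b) \Rightarrow (c) \Rightarrow (a)$ under the stated hypotheses, with the understanding that without the hypothesis $q>2$ one only gets $(a) \Leftrightarrow (b)$ (and $(c) \Rightarrow (a)$ is trivial in any case). For $(a) \Rightarrow (b)$: if $\bmu$ extends to a character $\mu$ of $G(F)$, then by Lemma \ref{l:char-ext} (applied to the abstract group $G(F)$, whose derived subgroup contains the image of every coroot subgroup $\alpha^\vee(\cOt)$, since $\alpha^\vee(\Gm)$ lies in $[G,G]$ already over $\bZ$) the character $\mu$ is trivial on $\alpha^\vee(\cOt)$, i.e.\ $\bmu \circ \alpha^\vee|_{\cOt} = 1$. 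One should be slightly careful that what we need is $\alpha^\vee(\cOt) \subseteq [G(F), G(F)]$; this follows since $\Gm = [\SL_2, \SL_2]$-image considerations show $\alpha^\vee(\Gm)(F) \subseteq [G(F),G(F)]$ via the $\SL_2$ or $\PGL_2$ associated to $\alpha$, because $\SL_2(F)$ is its own commutator subgroup (as $F$ is infinite).

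For $(b) \Rightarrow (a)$ (and more): assume \eqref{e:bmualpha} holds for all $\alpha \in \Delta$. The strategy is to build the extension one piece at a time. First extend $\bmu$ from $T(\cO)$ to $T(F)$: since $T(F) = T(\cO) \times t^{X^\vee}$ (non-canonically, using the uniformizer), simply extend by sending $t^\lambda \mapsto 1$. Call this $\bmu_T: T(F) \to \bCt$. Now I would invoke the Iwasawa/Bruhat-type structure: $G(\cO) = T(\cO) \cdot (\text{subgroup generated by all } U_{\alpha,0})$, and more to the point, the subgroup $G(\cO)^\dagger$ generated by $T(\cO)$ and all the root subgroups $U_{\alpha,0}$ together with the commutator structure. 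The cleanest route is: condition (b) says exactly that $\bmu$ is trivial on $(T \cap [G,G])(\cO)$ — here is where I must check that \eqref{e:bmualpha} for all $\alpha$ forces triviality on $(\bbG_m \otimes_\bZ Q^\vee)(\cO)$, which is immediate since that group is generated by the images $\alpha^\vee(\cOt)$; note it is $Q^\vee$, not $\tQ$, that appears, which is why the conclusion is genuinely weaker than ``easy.'' Then $\bmu$ descends to a character of $(T/(T\cap[G,G]))(\cO)$. To extend to $G(F)$, use that $G(F)/[G(F),G(F)] \cong T(F)/(\text{image of }T\cap[G,G])(F)$ when $G$ is split reductive and $F$ is infinite (the derived group $[G(F),G(F)]$ equals $G_{\der}(F)$-image composed with ... — more carefully, $G(F)^{\ab} = (G/G_{\der})(F)$ modulo a finite obstruction, but for the \emph{restriction} of abelianization characters one uses Lemma \ref{l:char-ext}). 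Concretely: extend the descended character of $(T/(T\cap[G,G]))(F) \supseteq (T/(T\cap[G,G]))(\cO)$ arbitrarily (possible since the quotient of cocharacter lattices is free, or just by divisibility of $\bCt$), pull back to $T(F)$, then compose with a splitting $G(F) \to (T/(T\cap [G,G]))(F)$ of the abelianization — or more honestly, show the composite $T(F) \hookrightarrow G(F) \to G(F)^{\ab}$ has image of finite index and the obstruction vanishes because $\bCt$ is divisible. This gives $(b) \Rightarrow (a)$.

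For the refinement $(b) \Rightarrow (c)$ when $q > 2$: the issue is that extending to $G(F)$ and then restricting to $G(\cO)$ is fine, so really one only needs $(a) \Rightarrow (c)$, i.e.\ that the extension to $G(F)$ can be chosen (or automatically is, after restriction) a character of $G(\cO)$ — but $(a) \Rightarrow (c)$ is automatic by restriction! So the real content of the ``$q>2$'' clause must be $(c) \Rightarrow (b)$ holding \emph{without} $q>2$ being needed there, while $(b)\Rightarrow (c)$ genuinely needs it. Re-examining: $(c) \Rightarrow (a)$ is \emph{not} automatic — a character of $G(\cO)$ need not extend to $G(F)$. So the correct cycle is $(a) \Rightarrow (b)$ always, $(b) \Rightarrow (a)$ always, and then $(a) \Rightarrow (c)$?? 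No: $(a)\Rightarrow (c)$ \emph{is} automatic (restrict). Hence the content is $(c) \Rightarrow (b)$, which needs $q>2$: given a character of $G(\cO)$, we want $\bmu\circ\alpha^\vee|_{\cOt}=1$. The point is that $G(\cO)$ has the \emph{perfect}-type property that the subgroup generated by $U_{\alpha,0}$ for a fixed $\alpha$, i.e.\ the image of $\SL_2(\cO)$ or $\PGL_2(\cO)$, has abelianization that is trivial precisely when $q>2$ (for $q=2$, $\SL_2(\bF_2[[t]])$ or its reduction has a nontrivial abelianization coming from $\SL_2(\bF_2) \cong S_3$). So for $q > 2$, $\alpha^\vee(\cOt) \subseteq [G(\cO), G(\cO)]$, forcing triviality of $\bmu \circ \alpha^\vee|_{\cOt}$; that is the implication $(c)\Rightarrow (b)$, completing the equivalence with (a),(b).

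\textbf{Main obstacle.} The delicate point, and the one I expect to require the most care, is establishing that $\alpha^\vee(\cOt) \subseteq [G(\cO), G(\cO)]$ when $q > 2$ (equivalently, computing the abelianization of the relevant rank-one subgroup of $G(\cO)$ and seeing that $q=2$ is exactly the bad case), together with the parallel but easier fact over $G(F)$. This is where the exclusion of $q=2$ enters, and it should be reducible to an explicit statement about $\SL_2$ (and $\PGL_2$, for roots where the coroot is divisible in $X^\vee$): that $\SL_2(\cO)$ is generated by elementary unipotent matrices and, for $q>2$, equals its own commutator subgroup, so the diagonal torus lands in the commutator subgroup. The passage from this rank-one statement to general $G$ uses the structure of the root datum and the fact that $\alpha^\vee$ factors through the $\SL_2$ (or $\PGL_2$) attached to $\alpha$; I would organize this via Lemma \ref{l:char-ext} applied to $H = G(\cO)$ (resp.\ $G(F)$) and $K$ the image of $\alpha^\vee$.
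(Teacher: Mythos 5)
Your strategy for the implications $(a)\Rightarrow(b)$ and $(c)\Rightarrow(b)$ — reduce to the rank-one subgroup attached to $\alpha$ and show its torus lands in the commutator subgroup, which for $\SL_2(\cO)$ requires $q>2$ — is essentially the paper's approach (the paper reduces to $\SL_2$ via the simply connected cover $\pi:\tG\to G$ and performs the explicit matrix computation of Lemma \ref{l:sl2-tech}). You also correctly identify where $q>2$ enters. And your observation that $(b)\Rightarrow(c)$ follows from $(b)\Rightarrow(a)$ plus restriction is a small simplification.

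However, there is a genuine gap in $(b)\Rightarrow(a)$, and it is the inclusion you do \emph{not} flag as the hard one. What you need is $[G(F),G(F)]\cap T(F)\subseteq\langle\alpha^\vee(F^\times)\rangle_{\alpha\in\Delta}$: once you know this, Lemma \ref{l:char-ext} applied to $\bmu_T$ immediately gives the extension. Instead of proving this, you attempt to descend $\bmu$ to a character of $(T/(T\cap[G,G]))(\cO)$ — but this descent is impossible in general, because condition $(b)$ only gives triviality on $Q^\vee\otimes_\bZ\cOt$, whereas the kernel of $T(\cO)\to(T/(T\cap[G,G]))(\cO)$ is the strictly larger group $\tQ\otimes_\bZ\cOt$. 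You even notice this distinction (``it is $Q^\vee$, not $\tQ$, that appears''), but then proceed to make exactly this error in the very next sentence. The ``abelianization'' variant doesn't save the argument either: whether one maps to $G(F)^{\ab}$ or to $(G/[G,G])(F)$, the step where $\bmu_T$ has to factor through the image of $T(F)$ requires \emph{precisely} the commutator inclusion above, which is not supplied by divisibility of $\bCt$. The paper's proof of this inclusion is the content of the first lemma following \eqref{e:comms}: one shows $\pi(\tG(R))$ is a normal subgroup of $G(R)$ (because $G(R)$ is generated by $\pi(\tG(R))$ and $T(R)$, and $T(R)$ normalizes $\pi(\tG(R))$), hence $[G(R),G(R)]\subseteq\pi(\tG(R))$, and then $\pi(\tG(R))\cap T(R)=\pi(\tT(R))=\langle\alpha^\vee(R^\times)\rangle$. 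Supplying that argument, or something like it, is needed to close the gap.
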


\begin{definition} \label{d:extendable}
A smooth character 
$\bmu:T(\cO)\ra \bCt$ is said to be \emph{extendable} (to $G(F)$) if it satisfies the equivalent conditions of the above proposition.
\end{definition}

\begin{remark} \label{r:extendable}
  Note that $\alpha^\vee$ generate the coroot lattice $Q^\vee\subset X^\vee$; hence, $\langle \alpha^\vee(\cOt) \rangle = Q^\vee \otimes_{\bZ} \cOt$.  
  Also note that $T={X^\vee}\otimes_\bZ \bbG_m$; therefore, $T(\cO)={X^\vee}\otimes_\bZ \cOt$. 
  It follows from the above proposition that the group of extendable (to $G(F)$) characters of $T(\cO)$ identifies with 
  \begin{equation} 
 \Hom_\sm (T(\cO) / \langle \alpha^\vee(\cOt) \rangle_{\alpha \in \Delta}, \bCt) = 
\Hom_\sm(({X^\vee}\otimes_\bZ \cOt)/(Q^\vee \otimes_{\bZ} \cOt), \bCt)
 \simeq
\Hom_\sm({X^\vee}/Q^\vee \otimes_\bZ \cOt, \bCt).
\end{equation} 
The last isomorphism follows because 
$({X^\vee}\otimes_\bZ \cOt)/(Q^\vee \otimes_{\bZ} \cOt) = {X^\vee}/Q^\vee \otimes_\bZ \cOt$
(by definition of $\otimes$, or by its right-exactness).
Note that ${X^\vee}/Q^\vee \otimes_\bZ \cOt$ has a topology which is induced by the topology on $\cOt$. Therefore, we can speak of its smooth characters. 
\end{remark}

The rest of this subsection is devoted to the proof of Proposition
\ref{p:strongpar}.  To this end, we will prove the following: 
\begin{equation} \label{e:comms}
[G(\cO), G(\cO)] \cap T(\cO) = \langle \bmu \circ
  \alpha^\vee(\cOt) \rangle_{\alpha \in \Delta} \text{ if $q > 2$}, \quad
[G(F), G(F)] \cap T(F) = \langle \bmu \circ
  \alpha^\vee(F) \rangle_{\alpha \in \Delta}, \forall q.
\end{equation}
Let us explain how this implies the proposition.  First, note
that, given any character of $T(\cO)$, there is a unique extension to
a character of $T(F)$ trivial on $t^{X^\vee}$.  Moreover, if the
original character was trivial on $\alpha^\vee(\cOt)$, then the
extension is trivial on $\alpha^\vee(F^\times)$, for all $\alpha \in
\Delta$.  Applying this observation, together with the second identity
in \eqref{e:comms} and Lemma \ref{l:char-ext}, we conclude that parts
(a) and (b) of the proposition are equivalent.  The first identity in
\eqref{e:comms} similarly implies, for $q > 2$, that parts (b) and (c)
of the proposition are equivalent.

\subsubsection{Proof of \eqref{e:comms}}
\begin{lemma} Fix an arbitrary ring $R$. Then,
\begin{equation}
[G(R), G(R)] \cap T(R) \subseteq \langle \bmu \circ
  \alpha^\vee(R) \rangle_{\alpha \in \Delta}.
\end{equation}
The opposite inclusion, $[G(R), G(R)] \supseteq \langle \bmu \circ
\alpha^\vee(R) \rangle_{\alpha \in \Delta}$ holds for all $G$ if and
only if it holds when $G(\bC)$ is
semisimple and simply connected. In the latter situation, it is equivalent to
\begin{equation}\label{e:comms2}
[G(R), G(R)] \supseteq T(R).
\end{equation}
\end{lemma}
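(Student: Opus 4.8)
The plan is to rephrase everything in terms of the description
$\langle\alpha^\vee(R)\rangle_{\alpha\in\Delta}=\im\bigl(Q^\vee\otimes_{\bZ}R^\times\to X^\vee\otimes_{\bZ}R^\times=T(R)\bigr)$,
together with the canonical isomorphism
$T(R)/\langle\alpha^\vee(R)\rangle_{\alpha\in\Delta}\cong(X^\vee/Q^\vee)\otimes_{\bZ}R^\times$
(right-exactness of $-\otimes_{\bZ}R^\times$). Then the first inclusion says exactly that every product of commutators in $G(R)$ which lies in $T(R)$ dies in $(X^\vee/Q^\vee)\otimes_{\bZ}R^\times$, and for this it is enough to exhibit a group homomorphism out of $G(R)$ (or out of a subgroup containing all commutators) that kills all commutators and restricts on $T(R)$ to the projection above. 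The reverse inclusion and the two equivalences will come from transporting the simply connected case along an isogeny.

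For the first inclusion I would reduce to the case that $[G,G]$ is simply connected by passing to a $z$-extension $1\to Z_0\to G_1\to G\to 1$, i.e. a central extension with $Z_0$ a split torus and $[G_1,G_1]$ simply connected (such $G_1$ exists because $G$ is split). Since $[G_1,G_1]$ is simply connected, its maximal torus has cocharacter lattice exactly $Q^\vee$; by \eqref{eq:corootgenerateT} applied to $G_1$, the group $T_1\cap[G_1,G_1]$ is $\bbG_m$ tensored with the saturation of $Q^\vee$ inside $X^\vee(T_1)$, so simple-connectedness forces that saturation to equal $Q^\vee$, and hence $G_1/[G_1,G_1]$ is a genuine split torus with cocharacter lattice $X^\vee(T_1)/Q^\vee$. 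The abelianization homomorphism $G_1(R)\to (G_1/[G_1,G_1])(R)=(X^\vee(T_1)/Q^\vee)\otimes_\bZ R^\times$ then kills $[G_1(R),G_1(R)]$ and restricts on $T_1(R)$ to the tautological projection, giving $[G_1(R),G_1(R)]\cap T_1(R)\subseteq\langle\alpha^\vee(R)\rangle_{T_1}$. To descend to $G$: since $Z_0$ is central, the connecting map $\partial\colon G(R)\to H^1_{\mathrm{fppf}}(R,Z_0)$ is a homomorphism into an abelian group, so $[G(R),G(R)]\subseteq\ker\partial=\im(G_1(R)\to G(R))$; combining this with $p^{-1}(T)=T_1$ and $p(\langle\alpha^\vee(R)\rangle_{T_1})=\langle\alpha^\vee(R)\rangle_{\alpha\in\Delta}$ (the coroots of $G_1$ map to those of $G$) yields the inclusion — cleanly whenever $\operatorname{Pic}(R)=0$, since then $H^1_{\mathrm{fppf}}(R,Z_0)=0$, so $G_1(R)\twoheadrightarrow G(R)$ and every commutator in $G(R)$ lifts to a commutator in $G_1(R)$. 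In particular this settles $R=\cO$ and $R=F$, the cases actually needed for \eqref{e:comms}.

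The equivalences are formal. The direction "$\Rightarrow$" is trivial since a simply connected semisimple group is a special case. For "$\Leftarrow$", let $\rho\colon G_{\mathrm{sc}}\to[G,G]\hookrightarrow G$ be the simply connected cover of the derived group; since $G_{\mathrm{sc}}$ is simply connected semisimple, $X^\vee(T_{\mathrm{sc}})=Q^\vee$, so $\langle\alpha^\vee(R)\rangle_{G_{\mathrm{sc}}}=T_{\mathrm{sc}}(R)$ and the hypothesis reads $[G_{\mathrm{sc}}(R),G_{\mathrm{sc}}(R)]\supseteq T_{\mathrm{sc}}(R)$. Then $[G(R),G(R)]\supseteq\rho\bigl([G_{\mathrm{sc}}(R),G_{\mathrm{sc}}(R)]\bigr)\supseteq\rho(T_{\mathrm{sc}}(R))=\langle\alpha^\vee(R)\rangle_{\alpha\in\Delta}$. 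The same identity $X^\vee=Q^\vee$ for simply connected semisimple $G$ gives $\langle\alpha^\vee(R)\rangle_{\alpha\in\Delta}=Q^\vee\otimes_\bZ R^\times=T(R)$, which is the asserted equivalence of the reverse inclusion with $[G(R),G(R)]\supseteq T(R)$.

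The main obstacle is the first inclusion for $G$ with non–simply-connected derived group: a priori the central-extension argument only bounds $[G(R),G(R)]$ by $\im(G_1(R)\to G(R))$, and one must rule out extra central contributions — that is, one needs a product of commutators of elements of $G(R)$ landing in $T(R)$ to be the image of a product of commutators of elements of $T_1(R)$, not merely of some element of $T_1(R)$. When $\operatorname{Pic}(R)=0$ this is automatic (which is all that is used in the paper), but for a completely general ring one would have to argue that the relevant $G(R)$-conjugation is implemented by $G_{\mathrm{ad}}(R)$ and that the abelianization of $\im(G_1(R)\to G(R))$ on which it acts is a subquotient of $T_1(R)$, so that the action is trivial.
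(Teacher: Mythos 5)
Your route to the first inclusion differs genuinely from the paper's and, as you yourself flag, has a gap for a general ring $R$. The paper works directly with the simply connected cover $\pi\colon\tG\to G$ of the derived group (no $z$-extension). Its key step is that $\pi(\tG(R))$ is a \emph{normal} subgroup of $G(R)$: it is generated by the root subgroups and $\pi(\tT(R))$, each of which is $T(R)$-stable, while $G(R)$ is generated by $T(R)$ and $\pi(\tG(R))$. Since the commutators of these generators of $G(R)$ --- $[\pi(\tG(R)),\pi(\tG(R))]$, $[T(R),\pi(\tG(R))]$, and $[T(R),T(R)]$ --- all lie in $\pi(\tG(R))$, the normal closure $[G(R),G(R)]$ lies in $\pi(\tG(R))$, and intersecting with $T(R)$ yields $\pi(\tT(R))=\langle\alpha^\vee(R)\rangle_{\alpha\in\Delta}$. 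No surjectivity of any cover on $R$-points is required. Your $z$-extension argument bounds $[G(R),G(R)]$ only by $\ker\partial=\im\bigl(G_1(R)\to G(R)\bigr)$, and to pass from there to the conclusion you invoke surjectivity of $G_1(R)\to G(R)$, which forces $H^1_{\mathrm{fppf}}(R,Z_0)=0$, e.g.\ $\operatorname{Pic}(R)=0$. That vanishing holds for $R=\cO$, $R=F$, and $R=\Fq$ --- the cases actually used in \eqref{e:comms} --- so your argument does prove what the paper needs, but not the lemma for arbitrary $R$. The patch you sketch in your final paragraph (make $G_{\mathrm{ad}}(R)$ implement conjugation and show it acts trivially on the relevant abelian subquotient of $\im(G_1(R)\to G(R))$) is morally the same normality observation the paper exploits, but as written it is a sketch, not a proof. (The paper's argument does implicitly assume that $G(R)$ is generated by $T(R)$ together with the root subgroups, which is again automatic in the local cases; but it avoids the $\operatorname{Pic}(R)$ obstruction entirely.) Your proofs of the two biconditionals coincide with the paper's: the forward direction is the special case, and the converse pushes $T_{\mathrm{sc}}(R)\subseteq[G_{\mathrm{sc}}(R),G_{\mathrm{sc}}(R)]$ forward along the cover of $[G,G]$, using that $T_{\mathrm{sc}}(R)$ equals the subgroup generated by the coroots.
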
 
\begin{proof} 
More generally, suppose we have an arbitrary morphism of
groups $\tG\ra G$ and a subgroup $T<G$ such that $G$ is generated by $T$ and the
image of $\tG$. Then, it is obvious that $[G,G]$ is the normal subgroup generated by 
$[\pi(\tG), \pi(\tG)] = \pi[\tG, \tG]$,  $[\pi(\tG), T]$, and $[T,T]$.

Returning to the situation of the lemma, let $\tG$ be the connected
reductive algebraic group such that $\tG(\bC)$ is the universal cover
of $[G,G](\bC)$. 
Let $\pi$ denote the canonical morphism $\tG\ra G$. Abusively, we will
use $\pi$ also to denote the induced morphism $\tG(R)\ra G(R)$. Note
that $\pi$ is an isomorphism on root subgroups; therefore, $G(R)$ is
generated by $\pi(\tG(R))$ and $T(R)$. Applying the considerations of
the previous paragraph, we conclude that $[G(R),G(R)]$ is the normal
subgroup generated by the image $\pi([\tG(R), \tG(R)])$, along with
$[T(R), \pi(\tG(R))]$ and $[T(R), T(R)]$. Note that these are all contained
in the normal subgroup generated by $\pi(\tG(R))$.

We claim, however, that $\pi(\tG(R))$ is normal, so that $[G(R), G(R)]
\subseteq \pi(\tG(R))$.  Since $G(R)$ is generated by $\pi(\tG(R))$ and
$T(R)$, it suffices to show that $\pi(\tG(R))$ is closed under
conjugation by $T(R)$. This follows because $\pi(\tG(R))$ is generated
by the root subgroups and $\pi(\tT(R))$, and each of these are closed
under conjugation by $T(R)$. Therefore, for $\tilde T < \tilde G$ the
maximal torus of $\tilde G$, we conclude that
\begin{equation} 
[G(R), G(R)] \cap T(R) \subseteq \pi (\tilde T(R)) = \langle \bmu \circ
  \alpha^\vee(R) \rangle_{\alpha \in \Delta}.
\end{equation}
The final equality holds because, for a simply connected semisimple
root system, the maximal torus is generated by the coroot subgroups.

The second assertion follows since $[G(R), G(R)] \supseteq \pi
[\tG(R), \tG(R)]$ and the set $\Delta$ of roots is the same for $G$
and $\tG$.  For the final assertion (the case $G = \tG$), we only need
to note that, in this case, $T$ is generated by the coroot subgroups,
so $T(R) = \langle \bmu \circ \alpha^\vee(R) \rangle_{\alpha \in
  \Delta}$.  
\end{proof}

\begin{lemma} For a fixed ring $R$, the inclusion \eqref{e:comms2}
  holds for all $G$ such that $G(\bC)$ is semisimple and
  simply connected if and only if it holds for $G = \SL_2$.
\end{lemma}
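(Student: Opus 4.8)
The plan is to prove the stated equivalence by treating the two directions separately. The ``only if'' direction is immediate, since $\SL_2(\bC)$ is semisimple and simply connected, so it belongs to the class of groups under consideration. For the converse I would assume that $[\SL_2(R),\SL_2(R)] \supseteq T_{\SL_2}(R)$, where $T_{\SL_2}(R) = \{\diag(a,a^{-1}) \mid a \in R^\times\}$ is the diagonal torus, and let $G$ be any group with $G(\bC)$ semisimple and simply connected.

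The key input is the standard ``root $\SL_2$.'' Fixing simple roots $\alpha_1,\dots,\alpha_r$ of $G$, there is for each $i$ a morphism of group schemes over $\bZ$
\[ \phi_i : \SL_2 \ra G, \qquad \phi_i\begin{pmatrix}1 & x \\ 0 & 1\end{pmatrix} = u_{\alpha_i}(x), \quad \phi_i\begin{pmatrix}1 & 0 \\ y & 1\end{pmatrix} = u_{-\alpha_i}(y), \]
compatible with the root subgroups fixed in \cite[\S 2]{Roche98}; as usual, it satisfies $\phi_i(\diag(a,a^{-1})) = \alpha_i^\vee(a)$. Taking $R$-points yields a group homomorphism $\phi_i : \SL_2(R) \ra G(R)$, and any group homomorphism maps commutator subgroups into commutator subgroups. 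Hence, under our hypothesis,
\[ \alpha_i^\vee(R^\times) = \phi_i\bigl(T_{\SL_2}(R)\bigr) \subseteq \phi_i\bigl([\SL_2(R),\SL_2(R)]\bigr) \subseteq [G(R),G(R)] \qquad (1 \le i \le r). \]

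To finish I would invoke the fact, already used at the end of the previous lemma, that for $G$ with $G(\bC)$ semisimple and simply connected the maximal torus is generated by the coroot subgroups: since $X^\vee = Q^\vee$ and the simple coroots form a $\bZ$-basis of $Q^\vee$, one has $T(R) = Q^\vee \otimes_\bZ R^\times = \langle \alpha_1^\vee(R^\times),\dots,\alpha_r^\vee(R^\times)\rangle$. Combined with the previous display, this gives $T(R) \subseteq [G(R),G(R)]$, which is \eqref{e:comms2} for $G$, as desired.

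I do not expect a serious obstacle here; the argument is formal once $\phi_i$ is in hand. The only points needing slight care are that $\phi_i$ exists as a morphism of group schemes over $\bZ$ (so that it induces a homomorphism on $R$-points for an \emph{arbitrary} ring $R$, not merely over a field) and is normalized compatibly with the chosen $u_{\pm\alpha_i}$ --- which is part of the structure theory of split reductive groups with a pinning --- together with the generation of $T(R)$ by coroot subgroups, which has already been recorded above.
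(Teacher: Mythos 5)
Your proof is correct and follows essentially the same route as the paper's (which is terser): for simply connected semisimple $G$, generate $T(R)$ by coroot subgroups, then push the $\SL_2$-hypothesis forward through the root morphisms $\phi_i : \SL_2 \to G$, using that homomorphisms carry commutator subgroups into commutator subgroups. Your write-up merely fills in the details about $\phi_i$ being defined over $\bZ$ and the lattice identity $X^\vee = Q^\vee$, which the paper leaves implicit.
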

\begin{proof} Suppose $G(\bC)$ is semisimple and simply
  connected. Then $T(\cO)$ is generated by its coroot subgroups, so it
  is enough to show \eqref{e:comms2} for the image of all subgroups
  $\SL_2 \to G$ corresponding to roots of $G$.
\end{proof}

\begin{lemma} \label{l:sl2-tech}
The inclusion \eqref{e:comms2} holds for $G = \SL_2$ in the case that either $R = \cO$ for $q > 2$ or $R$ is a field.
\end{lemma}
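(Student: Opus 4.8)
The plan is to show that for $G = \SL_2$, every element of the torus $T(R)$ is a product of commutators in $G(R)$, in the two stated cases. I would first set up explicit coordinates: write $T(R) = \{\diag(u,u^{-1}) \mid u \in R^\times\}$, and recall the standard identity in $\SL_2$ expressing a diagonal matrix via elementary unipotent matrices. Concretely, for $u \in R^\times$ one has the well-known factorization
\[
\diag(u,u^{-1}) = \begin{pmatrix} 1 & u \\ 0 & 1 \end{pmatrix}\begin{pmatrix} 1 & 0 \\ -u^{-1} & 1 \end{pmatrix}\begin{pmatrix} 1 & u \\ 0 & 1 \end{pmatrix}\begin{pmatrix} 1 & 0 \\ 1 & 1 \end{pmatrix}\begin{pmatrix} 1 & -1 \\ 0 & 1 \end{pmatrix}\begin{pmatrix} 1 & 0 \\ 1 & 1 \end{pmatrix},
\]
or some equivalent variant; this shows $T(R) \subseteq \langle U_\alpha(R), U_{-\alpha}(R)\rangle$ but not yet that it lies in the commutator subgroup. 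To get into $[G(R),G(R)]$, the key trick is the conjugation identity $\diag(v,v^{-1}) \begin{pmatrix}1 & x \\ 0 & 1\end{pmatrix}\diag(v,v^{-1})^{-1} = \begin{pmatrix}1 & v^2 x \\ 0 & 1\end{pmatrix}$, so that $\begin{pmatrix}1 & (v^2-1)x \\ 0 & 1\end{pmatrix}$ is a commutator for any $v \in R^\times$, $x \in R$. Hence any upper (or lower) unipotent matrix with entry in the ideal generated by $\{v^2 - 1 : v \in R^\times\}$ lies in $[G(R),G(R)]$.

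The next step is to analyze this ideal. If $R$ is a field with more than $3$ elements there is $v \in R^\times$ with $v^2 \neq 1$, so $v^2 - 1$ is a unit and the ideal is all of $R$; then every unipotent matrix is a commutator, and combining with the factorization above gives $T(R) \subseteq [G(R),G(R)]$. The small fields $R = \bF_2$ and $R = \bF_3$ must be handled separately: for $\bF_2$, $T$ is trivial so there is nothing to prove; for $\bF_3$, $\SL_2(\bF_3)$ is a group of order $24$ whose commutator subgroup is the quaternion group $Q_8$ of order $8$, which does contain $-I = \diag(2,2)$, the only nontrivial element of $T(\bF_3)$ — so the inclusion still holds, and one can note a field always has the required property, or just check these cases by hand. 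For $R = \cO$ with $q > 2$: the residue field $\bF_q$ has $q > 2$, so one can pick $v_0 \in \cO^\times$ whose reduction $\bar v_0$ satisfies $\bar v_0^2 \neq 1$; then $v_0^2 - 1 \in \cO^\times$ (it is a unit since its reduction is nonzero), the ideal is again all of $\cO$, and the same argument applies. Here the hypothesis $q > 2$ enters precisely to guarantee a unit $v$ with $v^2 - 1$ a unit.

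The main obstacle I anticipate is not any single hard step but the bookkeeping: verifying that the explicit product of elementary matrices indeed equals $\diag(u,u^{-1})$, and carefully tracking which entries can be absorbed into commutators — one wants every elementary factor appearing to have its off-diagonal entry land in the ideal $(\{v^2-1\})$, possibly after rescaling, which forces a slightly clever choice of the factorization (one typically writes $\diag(u,u^{-1})$ using a Whitehead-lemma-style manipulation so that all the ``movable'' entries are arbitrary and can be multiplied by $v^2 - 1 = $ unit). The cleanest route is probably the classical Whitehead lemma argument: $\diag(u,u^{-1}) \equiv I \pmod{[G(R),G(R)]}$ follows formally once $R^\times$ acts with a fixed point on $R$ in the required sense, i.e. once $(v^2 - 1)$ is a unit for some $v$. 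So I would state and prove a small lemma: \emph{if $R$ is a commutative ring in which $v^2 - 1 \in R^\times$ for some $v \in R^\times$, then $T(R) \subseteq [\SL_2(R), \SL_2(R)]$}, and then observe that both $R = \cO$ (for $q>2$) and $R$ any field satisfy the hypothesis (treating $\bF_2, \bF_3$ as trivial degenerate cases of the conclusion rather than of the hypothesis), which is exactly the claim of Lemma \ref{l:sl2-tech}.
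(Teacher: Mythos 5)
Your approach is essentially the same as the paper's: both express $\diag(u,u^{-1})$ as a product of elementary unipotent matrices, use the commutator identity $[\diag(v,v^{-1}),\, u_\alpha(x)] = u_\alpha((v^2-1)x)$ to show unipotents with entry in the ideal $I_R = \bigl(v^2-1 : v\in R^\times\bigr)$ lie in $[G(R),G(R)]$, and then analyze when $I_R$ is the unit ideal. Your treatment of the field case is correct, including the hand-check for $\bF_2$ (trivial torus) and $\bF_3$ (via the known commutator subgroup $Q_8$ of $\SL_2(\bF_3)$, which contains $-I$).

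There is, however, a genuine gap in your treatment of $R = \cO$: you assert that $q > 2$ guarantees the existence of $v_0\in\cO^\times$ with $\bar v_0^2\neq 1$ in $\Fq$, but this is false for $q=3$. In $\bF_3^\times=\{1,2\}$ every element squares to $1$, so for $R=\cO$ with residue field $\bF_3$ one has $v^2-1\in\fp$ for all $v\in\cO^\times$; the ideal $I_\cO$ is not the unit ideal, and your Whitehead-lemma reduction does not apply directly. The paper handles precisely this case by a two-step argument: first show $I_\cO\supseteq\fp$ for all odd $q$ (using that squaring is bijective on $1+\fp$, so every $1+z$ with $z\in\fp$ is a square $g^2$, whence $z\in I_\cO$), which yields $\alpha^\vee(1+\fp)\subseteq[G(\cO),G(\cO)]$; then reduce modulo $\fp$ to the already-established field case $[G(\Fq),G(\Fq)]\supseteq T(\Fq)$. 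You need to add this reduction argument (or some substitute) for $q=3$, since the statement "$q>2$ guarantees $v^2-1$ is a unit for some $v$" is simply not true there; as written your proof covers fields and $\cO$ with $q>3$, but not $\cO$ with $q=3$.
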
 
\begin{proof} 
Let $f \in R^\times$ and let $\alpha$ be the positive
simple root. Then
one can verify that
\begin{equation} \label{e:expl-comm}
\begin{pmatrix} 1 & 0 \\ -(f-1)^2/f & 1 \end{pmatrix}
\Bigl[ \begin{pmatrix} 1 & 0 \\ f-1 & 1 \end{pmatrix},
\begin{pmatrix} 1 & -1 \\ 0 & 1 \end{pmatrix} \Bigr] 
\begin{pmatrix} 1 & (1-f)/f \\ 0 & 1 \end{pmatrix}
= \begin{pmatrix} f & 0 \\ 0 & f^{-1} \end{pmatrix} = \alpha^\vee(f). 
\end{equation} 
We now consider the question of when the first and last matrices on
the LHS are in $[G(R), G(R)]$. Generally, for $g \in R^\times$,
\begin{equation} \label{e:comms2-tech}
\Bigl[
\begin{pmatrix}
1 & 0 \\
x & 1
\end{pmatrix},
\begin{pmatrix}
g & 0 \\
0 & g^{-1}
\end{pmatrix}
\Bigr]
=
\begin{pmatrix}
1 & 0 \\
x (1-g^2) & 1
\end{pmatrix}.
\end{equation}
Let $I_R := (1-g^2 \mid g \in R) = \{x(1-g^2) \mid x,g \in R\}$ be the ideal
of elements appearing in the lower-left entry of the final matrix. If this
ideal is the unit ideal, then the LHS of \eqref{e:expl-comm} is in $[G(R),
G(R)]$, as desired.  This is clearly true if $R$ is a field such that $|R| > 3$.

We claim that $I_R$ is also the unit ideal
 when $R = \cO$ and $q > 3$.  First, more generally for $q > 2$, we claim that $I_\cO \supseteq \fp$.  Indeed, the squaring operation is bijective on $1+\fp$, for all $z \in \fp$. So, we can take $g \in \cO$
such that $g^2 = 1 +z$, and hence $z \in I_R$.  

So for $q > 3$, to show $I_\cO$ is the unit ideal reduces to showing
that $I_{\cO/\fp} = I_{\Fq}$ is the unit ideal, which as we pointed
out is true in this case.

It remains to show that, for $R = \cO$ for $q=3$, or $R=\Fq$ for $q
\leq 3$, that \eqref{e:comms2} holds.  We have to show this slightly
differently, since now $I_R$ is not the unit ideal.

First, if $R = \Fq$ and $q=2$, there is nothing to show because now
$T(\Fq)$ is trivial. If $R = \Fq$ for $q=3$, then it is well known
that $[G(\Fq), G(\Fq)]$ has index three in $G(\Fq)$; since $T(\Fq)$
has order two, it follows that $T(\Fq)$ must be in the kernel of the
abelianization map $G(\Fq) \to G(\Fq) / [G(\Fq), G(\Fq)]$, i.e., that
$[G(\Fq), G(\Fq)] \supseteq T(\Fq)$.  This completes the proof of the
lemma for $R$ equal to a field.

Next, suppose that $R=\cO$ and $q=3$.  Then,
it suffices to show that $[G(\cO), G(\cO)] \supseteq \alpha^\vee(1+\fp)$.  Indeed, if we show this, the inclusion reduces to $[G(\Fq), G(\Fq)] \supseteq \alpha^\vee(\Fq^\times)$, which we have now established.

To show this, note that, by the above argument, $I_\cO \supseteq \fp$.
Hence, we can apply \eqref{e:comms2-tech} to the case $f \in 1 + \fp$,
and we conclude that $\alpha^\vee(1+\fp) \supseteq [G(\cO), G(\cO)]$,
as desired. \qedhere
\end{proof}

\subsection{Comparison between easy and extendable}\label{ss:comparison}  
\begin{corollary} \label{c:implications}
 Let $\bmu:T(\cO)\ra \bCt$ be a smooth character of $G$. Then 
\[
  \bmu \textrm{ is easy for $G$} \implies
  \bmu \textrm{ is extendable to $G(F)$ }
  \implies
\bmu \textrm{ is $W$-invariant} \implies (\bmu\circ \alpha^\vee|_{\cOt})^2=1, \, \forall
  \alpha\in \Delta.
\]
\end{corollary}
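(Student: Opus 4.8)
The plan is to verify each of the three implications separately; all three follow quickly from the results of this section, so the argument is short. For \emph{easy $\implies$ extendable}, I would begin from Definition \ref{d:easy}: an easy character $\bmu$ is $W$-invariant and satisfies the equivalent conditions of Proposition \ref{p:easy}. Using part (ii) of that proposition, write $\bmu=\chi_1\cdots\chi_l$, where each $\chi_i$ is a composition of a $W$-invariant rational character $T(\cO)\to\cOt$ with a smooth character $\cOt\to\bCt$. The implication $(3)\Rightarrow(4)$ of Lemma \ref{l:rationalChar} says that a $W$-invariant rational character is killed by every coroot; restricting to $\cOt$ and composing with the smooth character of $\cOt$ gives $\chi_i\circ\alpha^\vee|_{\cOt}=1$ for all $\alpha\in\Delta$, and multiplying over $i$ yields $\bmu\circ\alpha^\vee|_{\cOt}=1$. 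This is condition (b) of Proposition \ref{p:strongpar}, so together with the $W$-invariance of $\bmu$ it is exactly Definition \ref{d:extendable}. (Alternatively, one may invoke the observation recorded after Proposition \ref{p:easy}, that an easy character extends to $G(F)$, which together with $W$-invariance gives the same conclusion.)

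Next, \emph{extendable $\implies$ $W$-invariant} is immediate from Definition \ref{d:extendable}, since extendability is only defined for $W$-invariant characters; if an intrinsic argument is wanted, one notes that a character extending to $G(F)$ factors through $G(F)/[G(F),G(F)]$, hence is invariant under conjugation by $N_G(T)(F)$, and for split $G$ every element of $W$ has a representative in $N_G(T)(\cO)$ (indeed in $N_G(T)(\bZ)$, via the standard Weyl representatives). Finally, for \emph{$W$-invariant $\implies (\bmu\circ\alpha^\vee|_{\cOt})^2=1$} I would run the computation \eqref{eq:roots} verbatim with a smooth character in place of a rational one: fixing $\alpha\in\Delta$, writing $s_\alpha\in W$ for the reflection and using $s_\alpha(\alpha^\vee)=-\alpha^\vee$ together with $s_\alpha.\bmu=\bmu$,
\[
\bmu\circ\alpha^\vee=(s_\alpha.\bmu)\circ\alpha^\vee=\bmu\circ(-\alpha^\vee)=(\bmu\circ\alpha^\vee)^{-1}\quad\text{on }\cOt,
\]
whence $(\bmu\circ\alpha^\vee|_{\cOt})^2=1$.

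None of these steps is a genuine obstacle. The one place that needs a little care is the first implication, where the multiplicative ``product of compositions'' description of easy characters must be converted into the pointwise triviality of $\bmu\circ\alpha^\vee|_{\cOt}$; this is handled cleanly by the equivalence $(3)\Leftrightarrow(4)$ in Lemma \ref{l:rationalChar}. It is also worth flagging explicitly why the last implication stops at squares rather than asserting $\bmu\circ\alpha^\vee|_{\cOt}=1$: unlike $\bbG_m$, the group $\cOt$ may carry nontrivial smooth characters of order two, so in general the identity $(\bmu\circ\alpha^\vee|_{\cOt})^2=1$ cannot be improved.
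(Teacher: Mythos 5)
Your argument is correct and follows essentially the same route as the paper: the first implication via Propositions \ref{p:easy} and \ref{p:strongpar} (the paper leaves it at "immediate," while you spell out the passage through Lemma \ref{l:rationalChar}), the second via Weyl representatives in $N_{G(\cO)}(T(\cO))$ and triviality of an extension on commutators, and the third via the same reflection computation as \eqref{eq:roots} applied to the smooth character (the paper phrases this with the identity $(\bmu\circ\alpha^\vee)^2(x)=\bmu([\alpha^\vee(x),s_\alpha])$, which is your calculation in commutator form). Your closing observation that the last implication cannot be sharpened because $\cOt$ has nontrivial order-two smooth characters is an apt remark, consistent with Example \ref{ex:stronglyPar}(i) and Example \ref{ex:sl3-nonpar}.
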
 
\begin{proof} 
  The first implication is immediate from Propositions \ref{p:easy} and \ref{p:strongpar}.  The second
  implication follows from the facts $W\simeq
  N_{G(\cO)}(T(\cO))/T(\cO)$ and $[N_{G(\cO)}(T(\cO)),T(\cO)]\subseteq
  [G(\cO),G(\cO)]\cap T(\cO)$. For the last implication, note that
  $(\bmu\circ \alpha^\vee)^2(x) = \bmu([\alpha^\vee(x), s_\alpha])$,
  where $s_\alpha$ is any lift to $N_{G(\cO)}(T(\cO))$ of the simple
  reflection $s_\alpha$.
  \end{proof} 
The reverse implications can all fail.  For the first implication, see
Example \ref{ex:strongpar-not-easy}.
  For the remaining two, we have the following:

\begin{exam} \label{ex:stronglyPar}
\begin{enumerate} 
\item[(i)] Let $G=\SL_2$. Let $\bmu:T(\cO)\ra \bCt$ denote the composition 
\[
T(\cO)\simeq \cOt \ra \cOt/(\cOt)^2 \rar{\theta} \bCt,
\]
where $\theta$ is a nontrivial character.
Then $\bmu:T(\cO)=\cOt\ra \bCt$ is $W$-invariant; however, it does not
extend to $G(F)$ by Proposition \ref{p:strongpar}.
\item[(ii)]\cite[Example 8.4]{Roche98} Let $G=\Sp_{2n}$, $n\geq
  2$. Identify $T(\cO)$ with $(\cOt)^n$, and let $\bmu=(\theta,
  \cdots, \theta)$. Then $\bmu$ is $W$-invariant; however, it does not
  extend to $G(F)$. This is because, as observed in \cite[Example
  8.4]{Roche98}, the composition $\bmu \circ \alpha^\vee$ is not
  trivial for all $\alpha$ (and in fact, the root subsystem whose
  coroots have trivial composition produces an
  endoscopic group $\mathrm{SO}_{2n}$, which is not a subgroup of $G$).
\end{enumerate} 
\end{exam}

\begin{exam}\label{ex:sl3-nonpar}
Let $G=\SL_3$.  Define 
\[
\bmu(\diag(a,b,a^{-1}b^{-1}))=\theta(a)\theta(b),\quad a,b\in \cOt,
\] where $\theta$ is a nontrivial quadratic character of $\cOt$.
 By assumption, $(\bmu\circ \alpha^\vee)^2=1$ for both coroots of
$G$; however, $\bmu$ is not invariant under the transformation
$(a,b,a^{-1}b^{-1})\mapsto (a^{-1}b^{-1} ,b,a)$; in particular, it is
not $W$-invariant. 
\end{exam} 

In certain situations, either (or both) of the first two implications
in the above corollary become biconditionals.  
\begin{lemma} \label{l:cc-sc-strongpar-easy} 
\begin{itemize}
\item[(i)] Suppose that $Q^\vee = \langle \lambda - w(\lambda) \mid \lambda \in {X^\vee, w \in W} \rangle$.   Then  every $W$-invariant character of $T(\cO)$ is extendable to $G(F)$.

\item[(i')] The hypothesis of (i) is equivalent to the statement that,
 for some choice of simple
roots $\alpha_i$, there exist cocharacters $\lambda_i \in X^\vee$ such that $\langle \lambda_i, \alpha_i \rangle = 1$.   Moreover, this condition is implied by either of
the following:
\begin{itemize}
\item[(a)] $X/Q$ is free
\item[(b)] The root system of $G$ has no factors of type $A_1$ or $C_n$.
\end{itemize}

\item[(ii)] Suppose $X^\vee/Q^\vee$ is torsion-free. Then every extendable character of $T(\cO)$ (to $G(F)$) is easy.
\end{itemize}
\end{lemma}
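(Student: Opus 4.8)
The plan is to treat parts (i), (i'), (ii) in turn, in each case reducing to triviality of $\bmu$ on an explicit subgroup of $T(\cO)=\cOt\otimes_\bZ X^\vee$. Recall from Proposition \ref{p:strongpar} and Remark \ref{r:extendable} that $\bmu$ is extendable to $G(F)$ iff it is trivial on $\langle\alpha^\vee(\cOt)\rangle_{\alpha\in\Delta}$, the image of $\cOt\otimes_\bZ Q^\vee$ in $T(\cO)$; and from Proposition \ref{p:easy} that $\bmu$ is easy iff it is trivial on $([G,G]\cap T)(\cO)=\cOt\otimes_\bZ\tQ$.

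\emph{Part (i).} Let $\bmu$ be $W$-invariant. By Proposition \ref{p:strongpar} it suffices to show $\bmu$ is trivial on $\langle\alpha^\vee(\cOt)\rangle_{\alpha\in\Delta}$. By hypothesis $Q^\vee$ is the $\bZ$-span of the elements $\lambda-w(\lambda)$, so this subgroup is generated by the elements $(\lambda-w(\lambda))(u)=\lambda(u)\,w(\lambda(u))^{-1}$ with $u\in\cOt$, each of which $\bmu$ kills by $W$-invariance. (I would also record the always-valid reverse inclusion $\lambda-w(\lambda)\in Q^\vee$, by induction on $\ell(w)$ using $\lambda-s_{\alpha_i}(\lambda)=\langle\lambda,\alpha_i\rangle\,\alpha_i^\vee$; this is reused in (i').)

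\emph{Part (i').} The heart of the matter is the identity
\[
M:=\langle\,\lambda-w(\lambda)\ \mid\ \lambda\in X^\vee,\ w\in W\,\rangle=\sum_i\bigl(\im\langle-,\alpha_i\rangle\bigr)\,\alpha_i^\vee ,
\]
where $\im\langle-,\alpha_i\rangle\subseteq\bZ$ denotes the image of the map $\langle-,\alpha_i\rangle\colon X^\vee\to\bZ$. Inclusion ``$\supseteq$'' is $\lambda-s_{\alpha_i}(\lambda)=\langle\lambda,\alpha_i\rangle\,\alpha_i^\vee$; inclusion ``$\subseteq$'' I would prove by induction on word length, peeling off one simple reflection via $\lambda-s_{\alpha_i}w'(\lambda)=(\lambda-s_{\alpha_i}(\lambda))+s_{\alpha_i}(\lambda-w'(\lambda))$, the coroot reflection formula $s_{\alpha_i}(\alpha_j^\vee)=\alpha_j^\vee-\langle\alpha_j^\vee,\alpha_i\rangle\,\alpha_i^\vee$, and the observation $\langle\mu,\alpha_j\rangle\langle\alpha_j^\vee,\alpha_i\rangle=\langle\langle\mu,\alpha_j\rangle\alpha_j^\vee,\alpha_i\rangle\in\im\langle-,\alpha_i\rangle$. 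As the simple coroots form a $\bZ$-basis of $Q^\vee$, this identity gives $M=Q^\vee$ iff $\im\langle-,\alpha_i\rangle=\bZ$ for every $i$, i.e.\ iff each simple root $\alpha_i$ admits some $\lambda_i\in X^\vee$ with $\langle\lambda_i,\alpha_i\rangle=1$ --- which is the reformulation asserted in (i'). For (a): if $X/Q$ is free, each simple root $\alpha_i$ remains primitive in $X$ (a relation $\alpha_i=mx$ with $x\in X$ makes $\bar x$ torsion in $X/Q$, hence $x\in Q$, hence $m=1$ by primitivity of $\alpha_i$ in $Q$), so $\langle-,\alpha_i\rangle$ is onto. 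For (b): already on $Q^\vee$ the image of $\langle-,\alpha_i\rangle$ is generated by $\langle\alpha_i^\vee,\alpha_i\rangle=2$ together with the Cartan integers $\langle\alpha_j^\vee,\alpha_i\rangle\in\{-1,-2,-3\}$ over the neighbours $\alpha_j$ of $\alpha_i$, so it is all of $\bZ$ unless none of these Cartan integers is odd --- i.e.\ unless $\alpha_i$ has no neighbour (an $A_1$ factor), or every neighbour is a short root joined to $\alpha_i$ by a double bond, which forces $\alpha_i$ long and, since a connected Dynkin diagram has at most one multiple bond, forces $\alpha_i$ to be a terminal node joined to the rest of its component by a double bond. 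Inspecting $B_n$, $C_n$, $F_4$ shows this last possibility arises only in type $C_n$ (with $C_2=B_2$). Hence if $G$ has no factor of type $A_1$ or $C_n$, every simple root has a neighbour with odd Cartan integer, so the hypothesis of (i) holds.

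\emph{Part (ii).} If $X^\vee/Q^\vee$ is torsion-free then $\tQ=Q^\vee$ immediately from the definition of the saturation, so $\cOt\otimes_\bZ\tQ$ and $\langle\alpha^\vee(\cOt)\rangle_{\alpha\in\Delta}$ are the same subgroup of $T(\cO)$ and the conditions ``easy'' and ``extendable'' on $\bmu$ coincide; in particular every extendable character is easy. The only step I expect to require genuine care is the Dynkin-diagram bookkeeping in (i')(b) --- isolating ``$\alpha_i$ long with all neighbours short and double-bonded to it'' as the obstruction and checking it occurs exactly in the excluded types $A_1$ and $C_n$; parts (i), (i')(a), and (ii) are short.
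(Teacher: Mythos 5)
Your proof is correct, and parts (i), (i')(a), and (ii) match the paper's argument (with some extra care recorded in (i) and (i')(a), which the paper compresses). The one place you genuinely diverge is (i')(b). The paper verifies the hypothesis of (i) directly by Weyl-group computations: the identity $s_{\alpha_1}(\alpha_2^\vee)-\alpha_2^\vee=\alpha_1^\vee$ handles any simple root incident to a single bond (the ``$A_2$-subsystem'' trick), which covers everything except types $A_1$, $B_n$, $C_n$, $G_2$; the paper then treats the short terminal root of $B_n$ via $s_{\alpha_n}(\alpha_{n-1}^\vee)-\alpha_{n-1}^\vee=\alpha_n^\vee$ and $G_2$ via $s_{\alpha_1}(\alpha_1^\vee+\alpha_2^\vee)-(\alpha_1^\vee+\alpha_2^\vee)=\pm\alpha_1^\vee$, and notes $B_2=C_2$. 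You instead pass through the reformulation in (i') and observe that it is enough to find $\lambda\in Q^\vee$ with $\langle\lambda,\alpha_i\rangle=1$, i.e.\ an odd Cartan integer in the $i$-th column, and then classify the offending diagrams by inspection. Your route is more systematic: the ``odd Cartan integer'' criterion makes it transparent why $G_2$ and $B_n$ ($n\ge 3$) are unproblematic while $A_1$ and $C_n$ fail, and it folds the case analysis into one Dynkin-diagram fact rather than three separate root-system computations. The paper's route is more elementary in that it never restricts attention to $Q^\vee$ or invokes the equivalence proved in (i'), and it produces explicit Weyl-group witnesses. One small further remark: for the inclusion $\langle\lambda-w(\lambda)\rangle\subseteq\langle\lambda-s_{\alpha_i}\lambda\rangle$, a telescoping sum $\lambda-w(\lambda)=\sum_j\bigl(\mu_j-s_{\alpha_{i_j}}(\mu_j)\bigr)$ with $\mu_j=s_{\alpha_{i_{j-1}}}\cdots s_{\alpha_{i_1}}(\lambda)$ avoids the need to verify $s_{\alpha_i}$-stability of the right-hand side, and is closer to what the paper (tersely) invokes; your inductive argument with the reflection formula also works but carries an extra step.
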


\begin{proof}
  (i) If the coroot lattice equals the span of the elements $\lambda -
  w(\lambda)$ for $w \in W$ and $\lambda \in {X^\vee}$, then \eqref{e:bmualpha} is satisfied. This is because $W$-invariance implies $\bmu(\lambda(x)) = \bmu(w(\lambda)(x))$ for
all $x \in \Gm(\cO)$, and hence $\bmu((\lambda-w(\lambda))(x)) = 1$ for all $x \in \Gm(\cO)$.

(i') First, we claim that
$Q^\vee \supseteq \langle \lambda - w(\lambda) \mid \lambda \in X^\vee, w \in W\rangle$.  Let $\alpha_i, i \in I$ be a choice of simple roots. Since  $W$ is generated by the $s_{\alpha_i}$, 
\[
\langle \lambda - w(\lambda) \mid \lambda \in X^\vee, w \in W \rangle = \langle \lambda - s_{\alpha_i} \lambda \mid \lambda \in X^\vee, i \in I \rangle = \langle \langle \lambda, \alpha_i \rangle \alpha_i^\vee \mid \lambda \in X^\vee, i \in I \rangle.
\]
This proves the desired containment.   So, we need to show that the opposite
inclusion is equivalent to the condition stated in (i').  

Given $\lambda_i$ such that $\langle \lambda_i, \alpha_i \rangle = 1$,
we obviously get $\alpha_i^\vee$ in the RHS 
of the above equation.  Conversely, if $\alpha_i^\vee
\in \langle \langle \lambda, \alpha_i \rangle \alpha_i^\vee \mid \lambda
\in X^\vee, i \in I \rangle$, then there must exist $\lambda_i \in
X^\vee$ such that $\langle \lambda_i, \alpha_i \rangle = 1$.  Applying
this to all $i$ yields the desired equivalence (since $Q^\vee$ is
spanned by the $\alpha_i^\vee$).

(a) If $X/Q$ is torsion-free, then $Q$ must be saturated in $X$, 
so the condition (i') is satisfied.

  (b) For the root system $A_2$, with simple roots $\alpha_1$ and
  $\alpha_2$, one has $s_{\alpha_1}(\alpha_2^\vee) -\alpha_2^\vee =
  \alpha_1^\vee$, and similarly with indices $1$ and $2$ swapped, so
  that one concludes that $\alpha_1^\vee, \alpha_2^\vee \in \langle
  \lambda - w(\lambda) \rangle$ and hence $Q^\vee = \langle \lambda -
  w(\lambda) \rangle$.  The same argument shows that, for every root
  system in which every simple root is contained in a root subsystem
  of type $A_2$, then every coroot is contained in $\langle \lambda -
  w(\lambda) \rangle$ and hence (i) is also satisfied.  

  This takes care of all root systems except for types $A_1, B_n,
  C_n$, and $G_2$.  For type $B_n$ with $n \geq 3$, the above argument
  shows that, for the standard choice of simple roots $\alpha_1,
  \ldots, \alpha_n$ where $\alpha_n$ is the short simple root, then
  $\alpha_i^\vee \in \langle \lambda - w(\lambda) \rangle$ for $i <
  n$, since these are incident to a subdiagram of type $A_2$; for
  $\alpha_n^\vee$, it is still true that
  $s_{\alpha_n}(\alpha_{n-1}^\vee)-\alpha_{n-1}^\vee = \alpha_n^\vee$,
  so also $\alpha_n^\vee \in \langle \lambda - w(\lambda) \rangle$.
  For type $G_2$, if the simple roots are $\alpha_1$ and $\alpha_2$,
  we see that $s_{\alpha_1}(\alpha_1^\vee+\alpha_2^\vee) -
  (\alpha_1^\vee+\alpha_2^\vee) = \pm\alpha_1^\vee$, so $\alpha_1^\vee
  \in \langle \lambda - w(\lambda) \rangle$, and the same fact holds
  (with opposite sign) when indices $1$ and $2$ are swapped.  Note
  also that $B_2=C_2$, so we do not need to separately exclude $B_2$.

  (ii)  The hypothesis is equivalent to the condition that $Q^\vee$ is saturated in $X^\vee$; i.e., $Q^\vee=(Q^\vee)_\sat$. The result then follows from Remarks \ref{r:easy} and \ref{r:extendable}. 
\end{proof}

\subsection{On parabolic characters} Recall that a smooth character $\bmu:T(\cO)\ra \bCt$ is parabolic if its stabilizer in $W$ is a parabolic subgroup. Here is an example of a character which is not parabolic. 

\begin{exam}(cf.~\cite[Example 8.3]{Roche98}, due to Sanje-Mpacko) \label{ex:parabolic}
Let $N\geq 3$ and $G=\SL_N$. Define
\[
\bmu(\diag(a_1,a_2,\ldots,a_{N-1},a_1^{-1}\cdots a_{N-1}^{-1} )) =
\chi(a_1)\chi^2(a_2) \cdots \chi^{N-1}(a_{N-1}),
\]
where $\chi: \cOt \to \bCt$ is a character of order $N$.  Then the
stabilizer of $\bmu$ in $W$ is the subgroup $\bZ/N$ of cyclic
permutations, which is not parabolic (and in particular is not all of
$W$).
\end{exam}

On the other hand, as the following proposition illustrates, in certain situations all characters are parabolic.

\begin{proposition} \label{p:parabolicADE} Let $G$ be a connected simply laced split reductive group. If $X/Q$ is free, then every smooth character of $T(\cO)$ is strongly parabolic.  If, moreover, $X^\vee/Q^\vee$ is free, then every smooth character of $T(\cO)$ is easy. 
\end{proposition}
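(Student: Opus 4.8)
The plan is to reduce the statement to the combinatorial facts about root systems already established, in particular Lemma~\ref{l:cc-sc-strongpar-easy}. Recall that a character $\bmu:T(\cO)\to\bCt$ is strongly parabolic if $\Stab_W(\bmu)$ is the Weyl group of a Levi subgroup \emph{and} $\bmu$ extends to that Levi. First I would show that, when $\Delta$ is simply laced, the stabilizer $W_\bmu$ of \emph{any} character $\bmu$ is automatically a parabolic subgroup of $W$; this is the crux of the matter. The key observation is that $W_\bmu$ is generated by the reflections it contains: if $w\in W_\bmu$, then writing $w$ as a product of reflections in a reduced-type decomposition and using that the reflections $s_\alpha$ fixing $\bmu$ span $W_\bmu$ (a standard fact for stabilizers of points under reflection-group actions, valid because the ``walls'' $\ker(\bmu\circ\alpha^\vee|_{\cOt})$ cut out $W_\bmu$ as a reflection subgroup — this uses Steinberg's theorem on stabilizers). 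So $W_\bmu = \langle s_\alpha \mid \alpha\in\Psi\rangle$ where $\Psi = \{\alpha\in\Delta_G \mid \bmu\circ\alpha^\vee|_{\cOt}=1\}$ is a closed sub-root-system. In the simply laced case, every closed subsystem of $\Delta$ is $W$-conjugate to one generated by a subset of simple roots — equivalently, its Weyl group is a parabolic subgroup of $W$ (this is the Borel--de Siebenthal phenomenon, where the only obstruction to being ``standard parabolic'' comes from the extended Dynkin diagram, and in the simply laced case the relevant subsystems are still parabolic after conjugation). Hence $\bmu$ is parabolic, with Levi $L$ the subgroup generated by $T$ and the $U_{\pm\alpha}$, $\alpha\in\Psi$.

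Next I would verify that $\bmu$ extends to $L(F)$, so that it is in fact strongly parabolic, not merely parabolic. For this, apply Proposition~\ref{p:strongpar} with $G$ replaced by $L$: I need $\bmu$ to be $W_L$-invariant (true by construction, since $W_L = W_\bmu$) and $\bmu\circ\alpha^\vee|_{\cOt}=1$ for all $\alpha\in\Delta_L=\Psi$ (true by definition of $\Psi$). The one subtlety is that Proposition~\ref{p:strongpar} concerns characters of $T(\cO)$ with $T$ the full maximal torus of $L$, which it is, and that $L$ is again connected split reductive, which it is. So $\bmu$ extends to $L(F)$ and $\bmu$ is strongly parabolic. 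This handles the first assertion.

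For the second assertion, suppose in addition $X^\vee/Q^\vee$ is free. Then by Lemma~\ref{l:cc-sc-strongpar-easy}(ii) — whose hypothesis is precisely that $Q^\vee$ is saturated in $X^\vee$ — every extendable character of $T(\cO)$ is easy. Here I should be slightly careful: Lemma~\ref{l:cc-sc-strongpar-easy}(ii) is stated for extendability to $G(F)$, whereas we have produced extendability only to $L(F)$. But strongly parabolic characters are, by definition, characters whose stabilizer is a Levi Weyl group; I would instead invoke Theorem~\ref{t:characters}(iv) directly (``if $X^\vee/Q^\vee$ is free, every strongly parabolic character is easy''), which is exactly the statement needed and is proved elsewhere in Section~\ref{s:characters} from Lemma~\ref{l:cc-sc-strongpar-easy} applied appropriately. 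Combining: every character is strongly parabolic (first part), hence easy.

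The main obstacle I anticipate is the first step — proving that in the simply laced case the stabilizer $W_\bmu$ is genuinely a \emph{parabolic} subgroup and not merely a reflection subgroup. This requires (a) Steinberg's theorem, or a direct argument, to see $W_\bmu$ is generated by the reflections it contains, and (b) the classification input that in a simply laced root system, a reflection subgroup generated by a closed subsystem is $W$-conjugate to a standard parabolic. Part (b) fails for non-simply-laced types (e.g.\ the $\SL_2$ and $\Sp_{2n}$ examples \ref{ex:stronglyPar}, and Sanje-Mpacko's $\SL_N$ example \ref{ex:parabolic} where $W_\bmu$ is cyclic), and the freeness of $X/Q$ is what rules out the remaining non-parabolic closed subsystems (those of type $A_1^{\oplus}$ arising from the affine node), so the interplay between the simply laced hypothesis and the $X/Q$-free hypothesis must be used with care. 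Everything after that is a routine application of the propositions already in hand.
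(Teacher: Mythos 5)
Your overall strategy mirrors the paper's: identify $\Psi = \{\alpha \in \Delta : \bmu\circ\alpha^\vee|_{\cOt}=1\}$, show it is closed using the simply-laced identity $(\alpha+\beta)^\vee=\alpha^\vee+\beta^\vee$, argue that $W_\bmu = W(\Psi)$ is a parabolic subgroup of $W$, and conclude via Proposition~\ref{p:strongpar} and Lemma~\ref{l:cc-sc-strongpar-easy}. The paper carries out the closedness computation explicitly and otherwise cites Roche (Lemma~8.1(i) and a remark on p.~395); your appeal to Steinberg's connectedness theorem to obtain $W_\bmu=W(\Psi)$ (valid when $X/Q$ is free, applied on the dual side) is a reasonable way to unpack Roche's lemma, and your deduction of strong parabolicity from Proposition~\ref{p:strongpar} and of easiness from Theorem~\ref{t:characters}(iv) is fine.

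The genuine gap is your step~(b). It is \emph{false} that in a simply-laced root system every closed subsystem is $W$-conjugate to one generated by a subset of simple roots. Borel--de~Siebenthal produces exactly the opposite phenomenon in every simply-laced type other than $A_n$: deleting a node of prime mark from the extended Dynkin diagram yields proper closed subsystems of \emph{full} rank --- e.g.\ $A_1\times E_7$, $A_8$, $D_8$ inside $E_8$, or $D_k\times D_{n-k}$ inside $D_n$ --- whose Weyl groups have the same Coxeter rank as $W$ but strictly smaller order, hence are not conjugate to any proper standard parabolic $W_I$. The freeness of $X/Q$ does not exclude them ($E_8$ has $X/Q=0$), and they are not all of type $A_1^{\oplus k}$ as your parenthetical suggests. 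Concretely, for $q$ odd the nontrivial character of $Q^\vee(E_8)\otimes\cOt$ that factors through $\bigl(Q^\vee/Q^\vee(A_1\times E_7)\bigr)\otimes\cOt\cong\cOt/(\cOt)^2$ has $\Psi=A_1\times E_7$, a closed but non-Levi subsystem; Steinberg then gives $W_\bmu=W(A_1)\times W(E_7)$, which is not parabolic. So the step ``$W_\bmu$ is parabolic'' does not follow from ``$\Psi$ is closed'' as you (and, it appears, Remark~\ref{r:roche-th}) assert; one would need to rule out the Borel--de~Siebenthal subsystems as possible $\Psi$'s, and your argument does not do this. You should consult Roche's cited remark on p.~395 to see what additional input is actually being invoked and whether it applies in the generality claimed.
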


\begin{proof} Let $\Delta_\bmu$ denote the collection of roots
  $\alpha\in \Delta$ such that $\bmu\circ \alpha^\vee =1$. We claim
  that $\Delta_\bmu$ is a closed root subsystem.  Indeed, if $\alpha,
  \beta \in \Delta_\bmu$ and $\langle \alpha, \beta \rangle = -1$,
  then $(\alpha+\beta)^\vee = \alpha^\vee + \beta^\vee$, and so
  $\alpha+\beta \in \Delta_\bmu$ as well. Let $L$ denote the Levi
  subgroup corresponding to $\Delta_\bmu$. It follows from Proposition
  \ref{p:strongpar} along with \cite{Roche98}, Lemma 8.1.(i) and the
  comment at the end of p. 395, that $\bmu$ is strongly parabolic with
  Levi $L$ (cf.~Remark \ref{r:roche-th}).  Alternatively, if we use
  only from \cite{Roche98} that $\bmu$ is parabolic, then we can apply
  Lemma \ref{l:cc-sc-strongpar-easy} to deduce strong parabolicity.
  For the final statement, we again apply Lemma
  \ref{l:cc-sc-strongpar-easy}. \end{proof}

\subsection{Proof of Theorem \ref{t:characters}}
Parts (i) and (ii) follow from Propositions \ref{p:easy} and
\ref{p:strongpar}, respectively.  Next, we need a basic fact from the
theory of reductive groups.

\begin{lemma} \label{l:torsion} Let $G$ be a connected split reductive group over $\bZ$ with split torus $T$. Let $L<G$ be a Levi containing $T$. 
\begin{enumerate} 
\item[(i)] If ${X^\vee}/Q^\vee$ is torsion free, so is ${X^\vee}/Q_L^\vee$. 
\item[(ii)] If ${X}/Q$ is torsion free, then so is ${X}/Q_L$.
\item[(iii)] If the equivalent conditions (i) or (i') of Lemma
  \ref{l:cc-sc-strongpar-easy} are satisfied for $G$ (i.e., $Q^\vee =
  \langle \lambda - w(\lambda) \mid \lambda \in {X^\vee} \rangle$ or,
  for some choice of simple roots $\alpha_i$, there exist 
  cocharacters $\lambda_i \in X^\vee$ such that $\langle \lambda_i, \alpha_i
  \rangle = 1$), then they are also satisfied when $G$ is replaced by
  $L$.
\end{enumerate}   
  \end{lemma}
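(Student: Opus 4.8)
The plan is to reduce all three parts to the corresponding root-combinatorial statements about lattices and to exploit the fact that a Levi subgroup $L$ has the same cocharacter and character lattices $X^\vee, X$ as $G$, but with root system $\Delta_L \subseteq \Delta$ a closed subsystem. For part (i), the key point is that $Q_L^\vee$ is a \emph{direct summand} of $Q^\vee$: since $\Delta_L$ is obtained by taking a subset of a basis of simple roots (after conjugating $L$ so that $\Delta_L$ is generated by simple roots $\alpha_i$, $i \in I_L \subseteq I$), the coroots $\alpha_i^\vee$, $i \in I_L$, form part of a $\bZ$-basis of $Q^\vee$, hence $Q^\vee / Q_L^\vee$ is free. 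Then from the short exact sequence $0 \to Q^\vee/Q_L^\vee \to X^\vee/Q_L^\vee \to X^\vee/Q^\vee \to 0$ with both outer terms torsion-free, one concludes $X^\vee/Q_L^\vee$ is torsion-free. Part (ii) is dual: here one needs $X/Q$ torsion-free $\implies X/Q_L$ torsion-free. I would argue that $Q_L$ is saturated in $Q$ (the simple roots $\alpha_i$, $i \in I_L$, extend to a basis of $Q$, so $Q/Q_L$ is free), and then use the same three-term exact sequence argument $0 \to Q/Q_L \to X/Q_L \to X/Q \to 0$.

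For part (iii), I would use the characterization from Lemma \ref{l:cc-sc-strongpar-easy}(i'): after choosing simple roots $\alpha_i$, $i \in I$, of $G$ so that $\Delta_L$ is generated by $\{\alpha_i : i \in I_L\}$, the hypothesis gives cocharacters $\lambda_i \in X^\vee$ with $\langle \lambda_i, \alpha_i \rangle = 1$ for all $i \in I$. Since $L$ has the same cocharacter lattice $X^\vee$ and the same choice of simple roots restricted to $I_L$, the very same $\lambda_i$ for $i \in I_L$ witness the condition (i') for $L$. This is essentially immediate once one has arranged the simple roots of $G$ and $L$ compatibly. The one thing to be careful about is that the hypothesis of (iii), as stated, involves "some choice of simple roots," so I would first note that if the condition holds for one choice of simple roots of $G$ it holds for any (as all choices are $W$-conjugate, and $W$ acts on $X^\vee$), and then pick the choice adapted to $L$.

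The main obstacle — really the only nontrivial point — is justifying that the simple coroots of $L$ form part of a $\bZ$-basis of $Q^\vee$ (and dually for $Q_L$ inside $Q$), i.e., that $Q_L^\vee$ is a direct summand of $Q^\vee$. This follows from the standard structure theory: choosing a set of simple roots $\{\alpha_i\}_{i\in I}$ of $G$ such that $\Delta_L = \Delta \cap \Span_{i \in I_L}\{\alpha_i\}$ for some $I_L \subseteq I$ (which is always possible for a Levi), the set $\{\alpha_i^\vee\}_{i \in I}$ is a $\bZ$-basis of $Q^\vee$ and $\{\alpha_i^\vee\}_{i \in I_L}$ is a $\bZ$-basis of $Q_L^\vee$, so the quotient is free on $\{\alpha_i^\vee\}_{i \in I \setminus I_L}$. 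I would state this as a preliminary observation, citing the standard fact that Levi subgroups correspond to subsets of simple roots (up to conjugacy) and that simple coroots form a basis of the coroot lattice. Given that, each of (i), (ii), (iii) is a short diagram chase or direct substitution, so I would not belabor the computations.
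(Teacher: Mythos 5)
Your proposal is correct and follows essentially the same route as the paper: for (i) and (ii), the paper likewise reduces to the observation that $Q^\vee/Q_L^\vee$ and $Q/Q_L$ are torsion-free (because the simple (co)roots of $L$ extend to a basis of the simple (co)roots of $G$), and then the short exact sequence argument you spell out is the implicit diagram chase; for (iii), the paper uses precisely your argument via condition (i'), noting $W$-conjugacy of choices of simple roots and then picking a choice adapted to $L$.
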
 
  \begin{proof} Parts (i) and (ii) follow from the fact that $Q/Q_L$
    and $Q^\vee/Q^\vee_L$ are always torsion free.  For part (iii), we
    consider the condition (i'), i.e., the second condition. Note
    that, if this condition is satisfied for some choice of simple
    roots, it must be satisfied for all choices of simple roots, since
    two different choices are related by an element of the Weyl group.
    But, by definition, one can choose simple roots of $L$ which form
    a subset of a choice of simple roots of $G$ (and note that the (co)weight lattices are the same for $L$ as for $G$).
  Hence condition (i') is satisfied for $L$.
  \end{proof}

  Then, parts (iii) and (iv) both follow from Lemmas \ref{l:torsion} and
  \ref{l:cc-sc-strongpar-easy}. Finally, part (v) follows from Proposition
  \ref{p:parabolicADE} and Lemma \ref{l:torsion}.

%%%%%%%%%%%%%%%%%%%%%%%%%%%%%%%%%%%%%%%%%%%%%%%%%%%%%%%%%%%%%%%%%%%%%%%%%%%%%%%%%%%%

\section{Central families and Satake Isomorphisms} \label{s:Satake}

\subsection{Recollections on decomposed subgroups} We begin this section with some general remarks on compact open subgroups of $G(F)$.  
Let $P$ be a parabolic subgroup of $G$ with Levi decomposition
$LU_P^+$. Let $P^-=L U_P^-$ denote the opposite of $P$ relative to
$L$. (According to \cite[Proposition 14.21]{Borel}, the opposite
parabolic is unique up to conjugation by a unique element of $U_P^+$.)
Let $J\subset G$ be a compact open subgroup. Let
  \[
  J_P^+ = J\cap U_P^+(F), \quad J_P^0 = J\cap L(F), \quad J_P^-=J\cap
  U_P^-(F).
  \]

For a parabolic $P=LU_P^+$,
we let $\Delta_P^+$ denote the set of roots of $U_P^+$. Similarly,
we let $\Delta_P^-$ denote the set of roots of $U_P^-$. Note that
$\Delta = \Delta_L \sqcup \Delta_P^+ \sqcup \Delta_P^-$.
\begin{definition} \label{d:decomposed}
\begin{enumerate} 
\item The subgroup $J$ is \emph{decomposed} with respect to $P$ if the
  product
\[
J_P^+\times J_P^0 \times J_P^- \ra J
\]
is surjective (and hence bijective). 
\item The group $J$ is \emph{totally decomposed} with respect to $P$
 if it is decomposed, and in addition, the product maps
\[
\prod_{\alpha \in \Delta_P^\pm} U_\alpha(F)\ra J_P^{\pm}
\]
are surjective (and hence bijective) for any ordering of the factors on
the left hand side.
\item We say that $J$ is \emph{absolutely totally decomposed} if it
is totally decomposed with respect to all parabolic subgroups $P$.
\end{enumerate} 
\end{definition}

The above definitions are closely related to the ones given in
\cite[\S 6]{Bushnell98} and \cite[\S 1.1]{Bushnell00}. (Note, however,
that similar decompositions appear in \cite[\S
6]{BTgrcl1}.)  The following result, which is immediate from the
definitions, is similar to a statement in \cite[\S 1.1]{Bushnell00}.

\begin{lemma} \label{l:decomposed1} Let $J$ be totally decomposed in
  $G$ with respect to a Borel subgroup $B$. Then $J$ is totally
  decomposed with respect to every parabolic $P$ containing $B$.
\end{lemma}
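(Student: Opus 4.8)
Let $J$ be totally decomposed in $G$ with respect to a Borel subgroup $B$. Then $J$ is totally decomposed with respect to every parabolic $P$ containing $B$.

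The plan is to reduce the statement about a general parabolic $P \supseteq B$ to the known decomposition with respect to $B$, using the fact that the three factorizations ($B$-decomposition, $P$-decomposition, and the pieces that build the unipotent parts) are all compatible via subdivisions of the root ordering. First I would fix a parabolic $P = LU_P^+$ with $B \subseteq P$, and fix a Borel $B_L = B \cap L$ of $L$; then $B = B_L U_P^+$, and the set of positive roots $\Delta^+$ (those of $B$) decomposes as $\Delta_{B_L}^+ \sqcup \Delta_P^+$, while the negative roots split as $\Delta_{B_L}^- \sqcup \Delta_P^-$. Since $J$ is totally decomposed with respect to $B$, for \emph{any} ordering of $\Delta^+$ (resp. $\Delta^-$) the product map $\prod_{\alpha \in \Delta^+} U_\alpha(F) \to J_B^+$ is a bijection, and likewise for $J_B^-$; also the product $J_B^+ \times J_B^0 \times J_B^- \to J$ is a bijection, where $J_B^0 = J \cap T(F)$.

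The key step is to choose the root orderings strategically. For the $+$ side, order $\Delta^+$ so that all roots in $\Delta_{B_L}^+$ come first and all roots in $\Delta_P^+$ come after. Then the bijection $\prod_{\alpha\in\Delta^+}U_\alpha(F)\to J_B^+$ factors through the partial product $\prod_{\alpha\in\Delta_{B_L}^+}U_\alpha(F)\to J\cap B_L(F)U_P^+(F)$ followed by multiplication by $\prod_{\alpha\in\Delta_P^+}U_\alpha(F)$; using that $U_P^+$ is normalized by $L$ and that these products land in the obviously distinct groups $U_{B_L}^+(F)$ and $U_P^+(F)$, one extracts that $\prod_{\alpha\in\Delta_P^+}U_\alpha(F)\to J_P^+ := J\cap U_P^+(F)$ is a bijection for this ordering, hence (by the total $B$-decomposition, which allows \emph{any} ordering) for every ordering of $\Delta_P^+$. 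The analogous argument on the $-$ side gives that $\prod_{\alpha\in\Delta_P^-}U_\alpha(F)\to J_P^-$ is a bijection for every ordering. Finally, to get the $P$-decomposition $J_P^+\times J_P^0\times J_P^-\to J$ with $J_P^0 = J\cap L(F)$: starting from $J = J_B^+ J_B^0 J_B^-$ and the orderings above, group the factors as $\bigl(\prod_{\alpha\in\Delta_P^+}U_\alpha(F)\bigr)\cdot\bigl(\prod_{\alpha\in\Delta_{B_L}^+}U_\alpha(F)\cdot J_B^0\cdot\prod_{\alpha\in\Delta_{B_L}^-}U_\alpha(F)\bigr)\cdot\bigl(\prod_{\alpha\in\Delta_P^-}U_\alpha(F)\bigr)$; the outer two parentheses are $J_P^+$ and $J_P^-$ (just established), and the middle parenthesis lies in $L(F)$, hence equals $J\cap L(F) = J_P^0$, while injectivity of the triple product follows from injectivity of the full ordered product over $\Delta^+\sqcup\{0\}\sqcup\Delta^-$. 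This yields the decomposition, and combining it with the two unipotent factorizations gives total decomposition with respect to $P$.

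I expect the main obstacle to be the bookkeeping in the middle step: one must be careful that the "intertwining" of $U_P^+$-factors and $B_L$-factors really does separate cleanly, i.e., that a product of elements of $U_{B_L}^+(F)$ times a product of elements of $U_P^+(F)$ lying in $J$ forces each partial product to lie in $J$ separately. This uses that $J_B^+ = J\cap U^+(F)$ and that $U^+(F) = U_{B_L}^+(F)\rtimes U_P^+(F)$ (as varieties, via the ordered product with $\Delta_{B_L}^+$ first), so the projection $U^+(F)\to U_P^+(F)$ is well-defined and sends $J_B^+$ onto $J_P^+$; the injectivity of the full ordered product then forces the factorization to be unique and compatible. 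Once that separation is in hand, everything else is a direct rearrangement of already-bijective product maps, with no new input needed beyond the total $B$-decomposition hypothesis and the standard structure theory of parabolics (e.g., \cite[Proposition 14.21]{Borel}, already cited).
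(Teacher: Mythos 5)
The paper gives no proof of this lemma, declaring it ``immediate from the definitions'' and pointing to a similar statement in Bushnell--Kutzko. Your proposal spells out exactly the sort of argument the authors have in mind, and the approach is correct: since total decomposition with respect to $B$ holds for \emph{every} ordering of $\Delta^+$ and $\Delta^-$, one may choose orderings making $\Delta_P^{\pm}$ contiguous blocks and then read off the $P$-decomposition from the $B$-decomposition.

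One inference, however, should be tightened. You write that the middle parenthesis $M = \bigl(\prod_{\alpha\in\Delta_{B_L}^+}U_\alpha(F)\bigr)\cdot J_B^0\cdot\bigl(\prod_{\alpha\in\Delta_{B_L}^-}U_\alpha(F)\bigr)$ ``lies in $L(F)$, hence equals $J\cap L(F) = J_P^0$.'' The inclusion $M \subseteq J\cap L(F)$ is clear, but the reverse inclusion does not follow merely from $M$ lying in $L(F)$, nor from the injectivity of the ordered product over $\Delta^+\sqcup\{0\}\sqcup\Delta^-$ (which only controls elements built from the factors $J_P^+\times M\times J_P^-$). What you actually need is the standard injectivity of $U_P^+(F)\times L(F)\times U_P^-(F)\to G(F)$ (the open-cell decomposition for the pair of opposite parabolics with Levi $L$). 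Given $x\in J\cap L(F)$, write $x=abc$ with $a\in J_P^+$, $b\in M$, $c\in J_P^-$, which is possible since $J=J_P^+MJ_P^-$; comparing $a\cdot b\cdot c$ with $1\cdot x\cdot 1$ inside $U_P^+(F)\cdot L(F)\cdot U_P^-(F)$ forces $a=c=1$ and $b=x$, giving $J\cap L(F)\subseteq M$. With this addition (and after fixing the minor inconsistency between the ordering of $\Delta^+$ used in your second paragraph, $\Delta_{B_L}^+$ first, and the ordering needed for the regrouping in your third paragraph, $\Delta_P^+$ first --- harmless, since total decomposition permits any ordering, but the phrase ``the orderings above'' should be corrected), the proof is complete.
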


In particular, if $J$ is totally decomposed with respect to all
Borels, then it is absolutely totally decomposed. The following lemma is also immediate from the definitions. 

\begin{lemma} \label{l:decomposed2} Let $J$ be a compact open subgroup
  of $G$ which is decomposed with respect to a parabolic $P$. Suppose
  $L(\cO)$ normalizes $J_P^+$ and $J_P^-$. Then the subset $K=JL(\cO)$
  is a subgroup of $G(F)$; moreover, it is decomposed with respect to
  $P$; that is, $K=K_P^+ K_P^0 K_P^-$ where $K_P^\pm=J_P^\pm$ and
  $K_P^0=L(\cO)$.
\end{lemma}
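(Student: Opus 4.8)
The plan is to verify directly that $K = JL(\cO)$ is closed under multiplication, using the decomposition $J = J_P^+ J_P^0 J_P^-$ and the normalization hypothesis. Since $K$ contains the identity and is manifestly closed under inverses once it is a subgroup (or, alternatively, since $K$ is a subset of the group $G(F)$ with $K = K^{-1}$, as follows from $J = J^{-1}$ and $L(\cO) = L(\cO)^{-1}$ together with the normalization, which I would check first), it suffices to show $K \cdot K \subseteq K$.

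First I would write a general element of $K$ as a product $j \ell$ with $j \in J$ and $\ell \in L(\cO)$, and then decompose $j = j^+ j^0 j^-$ according to $J = J_P^+ J_P^0 J_P^-$. The product of two elements of $K$ then has the shape $j^+ j^0 j^- \ell \cdot (j')^+ (j')^0 (j')^- \ell'$. The key move is to push $\ell$ past $j^-$: since $L(\cO)$ normalizes $J_P^-$, we have $\ell \cdot (j')^+ = ({}^\ell (j')^+) \ell$ only after first handling $j^- \ell$, so more precisely I would repeatedly use that $L(\cO)$ normalizes both $J_P^+$ and $J_P^-$ (and obviously $L(\cO) \supseteq J_P^0$ is a subgroup of $L(F)$) to move all the $L(\cO)$-factors to one side. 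Concretely: $j^- \ell = \ell \cdot (\ell^{-1} j^- \ell)$ with $\ell^{-1} j^- \ell \in J_P^-$; then $\ell (j')^+ = (\ell (j')^+ \ell^{-1}) \ell$ with $\ell (j')^+ \ell^{-1} \in J_P^+$; and so on, collecting the $L(\cO)$-factors in the middle. After reorganizing, the product lies in $J_P^+ \cdot L(\cO) \cdot J_P^-$ times a leftover $L(\cO)$-factor, all of which can be reassembled — using once more that $L(\cO)$ normalizes $J_P^\pm$ and that $J_P^0 \subseteq L(\cO)$, hence $J_P^+ J_P^0 J_P^- L(\cO) = J_P^+ (J_P^0 L(\cO)) J_P^- \subseteq J_P^+ L(\cO) J_P^- = K_P^+ K_P^0 K_P^-$ — into an element of $K = J_P^+ L(\cO) J_P^-$. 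This simultaneously establishes that $K$ is a subgroup, that $K = K_P^+ K_P^0 K_P^-$ with $K_P^\pm = J_P^\pm$ and $K_P^0 = L(\cO)$, and hence that $K$ is decomposed with respect to $P$.

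I do not expect any genuine obstacle here; the statement is, as the paper says, immediate from the definitions, and the only thing requiring care is the bookkeeping of conjugations when moving $L(\cO)$-factors past $J_P^+$ and $J_P^-$, together with the elementary observations that $J_P^0 \subseteq L(\cO)$ and that $J_P^+$, $J_P^-$ are each groups (being the intersections $J \cap U_P^\pm(F)$). The one point worth stating explicitly is that the product $K_P^+ \times K_P^0 \times K_P^- \to K$ is not only surjective but bijective: injectivity follows from the corresponding bijectivity for $J$ together with the fact that $U_P^+(F) \times L(F) \times U_P^-(F) \to G(F)$ is injective on the relevant open subset (an element of $U_P^+ L U_P^-$ determines its three components uniquely), so an identity $k_P^+ k_P^0 k_P^- = (k')_P^+ (k')_P^0 (k')_P^-$ in $K$ forces componentwise equality.
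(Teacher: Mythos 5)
Your overall approach is right, but the write-up glosses over the single step that actually has content, and the paper itself gives you no help here since it asserts the lemma as ``immediate from the definitions'' without proof. After you push all the $L(\cO)$-factors to one side (say the right), what is left on the left is a word in elements of $J_P^+$ and $J_P^-$ in mixed order, e.g.\ something of the shape $j^+ a^- b^+ c^-$. Your phrasing ``the product lies in $J_P^+ \cdot L(\cO) \cdot J_P^-$ times a leftover $L(\cO)$-factor'' skips the question of how the interior adjacency $J_P^- J_P^+$ is absorbed. The point you need to make explicit is that this mixed word lies in $J$ (because $J_P^\pm \subseteq J$ and $J$ is a group), and can therefore be re-decomposed as $d^+ d^0 d^-$ using $J = J_P^+ J_P^0 J_P^-$; only then do $J_P^0 \subseteq L(\cO)$ and one more conjugation give you an element of $J_P^+ L(\cO) J_P^-$. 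Without invoking the group structure of $J$ and its decomposition at that point, the bookkeeping does not close up. Also note that your manipulation ``$j^-\ell = \ell(\ell^{-1}j^-\ell)$, then $\ell(j')^+ = \cdots$'' as written moves $\ell$ to the wrong side and then treats it as if it were adjacent to $(j')^+$; pushing $L(\cO)$-factors consistently to the right (conjugating $J_P^\pm$-elements leftward across them) avoids this confusion.

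Two further remarks. First, there is a cleaner organizing principle: show once that $J L(\cO) = J_P^+ L(\cO) J_P^- = L(\cO) J$ (each equality is a one-line use of $J = J_P^+ J_P^0 J_P^-$, $J_P^0 \subseteq L(\cO)$, and the normalization hypothesis), and then conclude that $K = J L(\cO)$ is a subgroup from the standard fact that a product of two subgroups that permute is a subgroup; this both proves $K$ is a group and hands you $K = K_P^+ K_P^0 K_P^-$ simultaneously. Second, you are right to flag $J_P^0 \subseteq L(\cO)$: the lemma as stated does not list it as a hypothesis, but the conclusion $K_P^0 = L(\cO)$ requires it, and in the paper's applications it holds because $J \subseteq G(\cO)$. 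Your argument for the injectivity of $K_P^+ \times K_P^0 \times K_P^- \to K$ via uniqueness of components in the big cell $U_P^+ L U_P^-$ is correct and is the right thing to say.
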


\subsection{The subgroup K}\label{s:K}
Let $f: \Delta \to \bZ$ be a function satisfying the
properties
\begin{enumerate}\label{eq:rocheGroups}
\item[(a)] $f(\alpha) + f(\beta) \geq f(\alpha+\beta)$, whenever
  $\alpha, \beta, \alpha+\beta \in \Delta$;
\item[(b)] $f(\alpha) + f(-\alpha) \geq 1$.
\end{enumerate}
In particular, $f$ is concave in the sense of Bruhat and Tits (see
\cite{BTgrcl1}, \S 6.4.3 and \S 6.4.5).  Let
\begin{equation}\label{eq:Jf}
  J=J_f := \langle U_{\alpha, f(\alpha)}, T(\cO) \mid \alpha \in \Delta \rangle.
\end{equation}
Using the results of Bruhat and Tits, specifically \cite[Proposition
6.4.9]{BTgrcl1}, Roche proved the following lemma.

\begin{lemma}\cite[Lemma 3.2]{Roche98} \label{l:uroche} 
  The group $J$ is absolutely totally decomposed in $G$. Moreover, $J \cap U_\alpha(F) = U_{\alpha, f(\alpha)}$ for all
  $\alpha \in \Delta$.
\end{lemma}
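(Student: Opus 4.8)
The plan is to derive the lemma from the Bruhat--Tits theory of concave functions on root systems, exactly as Roche does. First I would extend $f$ to $\Delta\cup\{0\}$ by setting $f(0):=0$. With this convention, the generators of $J$ in \eqref{eq:Jf} are precisely the affine root subgroups $U_{\alpha,f(\alpha)}$ together with the ``$f(0)=0$'' piece $T(\cO)$ of the torus, so that $J=J_f$ coincides with the group $U_f$ attached to $f$ in \cite[\S 6.4]{BTgrcl1} for the standard (split) valued root datum of $G$ over $F$. As already observed above, conditions (a) and (b) guarantee that $f$ is concave in the sense of \cite[\S 6.4.3, \S 6.4.5]{BTgrcl1}; the strict positivity $f(\alpha)+f(-\alpha)\geq 1$ in (b) is moreover what prevents $J$ from containing a representative of any nontrivial Weyl element, and so places $J$ in the range where the product decompositions below hold.

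The main step is then to invoke \cite[Proposition 6.4.9]{BTgrcl1}. Fix a Borel $B\supseteq T$, equivalently a system of positive roots $\Delta^+$, with $\Delta^-=-\Delta^+$. That proposition asserts that, for any choice of orderings of $\Delta^+$ and of $\Delta^-$, the multiplication map
\[
\Bigl(\,\prod_{\alpha\in\Delta^+}U_{\alpha,f(\alpha)}\Bigr)\times T(\cO)\times\Bigl(\,\prod_{\alpha\in\Delta^-}U_{\alpha,f(\alpha)}\Bigr)\;\longrightarrow\;J
\]
is a bijection. This is precisely the assertion that $J$ is totally decomposed with respect to $B$, with $J_B^{\pm}=\prod_{\alpha\in\Delta^{\pm}}U_{\alpha,f(\alpha)}$ and $J_B^{0}=T(\cO)$. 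Since the generating data in \eqref{eq:Jf} is symmetric in all the roots, the argument is insensitive to the choice of $B$, so $J$ is totally decomposed with respect to every Borel; as every parabolic contains a Borel, Lemma \ref{l:decomposed1} then gives that $J$ is totally decomposed with respect to every parabolic --- i.e.\ absolutely totally decomposed.

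For the identity $J\cap U_\alpha(F)=U_{\alpha,f(\alpha)}$, the inclusion $\supseteq$ is immediate from \eqref{eq:Jf}. For $\subseteq$ I would read it off the same proposition, which also identifies $J\cap U_\alpha(F)$ with $U_{\alpha,f^{*}(\alpha)}$ for the concave optimization $f^{*}$ of $f$, together with the fact that $f^{*}(\alpha)=f(\alpha)$: the only relations that could lower $f^{*}(\alpha)$ below $f(\alpha)$ would pass through $\alpha$, $-\alpha$ and $0$, and (b) forbids this because $f(\alpha)+f(-\alpha)\geq 1 > 0 = f(0)$. Alternatively, one may argue from the decomposition alone: for $u\in J\cap U_\alpha(F)$, choose a Borel $B$ with $\alpha\in\Delta^+$; uniqueness of the big-cell factorization $U_B^{+}\cdot T\cdot U_B^{-}$ in $G(F)$ forces $u\in J_B^{+}$, and then, writing $u$ in the Bruhat coordinates of $U_B^{+}(F)$, its only nonzero coordinate is the $\alpha$-one, which by total decomposition lies in $\fp^{f(\alpha)}$; hence $u\in U_{\alpha,f(\alpha)}$.

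I expect the one real obstacle to be the faithful translation between this concrete setup and the Bruhat--Tits formalism --- matching $J_f$ with the group $U_f$ of the standard valued root datum, confirming that conditions (a)--(b) amount to ``concave'' (and ``optimal'') in the sense of \cite[\S 6.4]{BTgrcl1}, and carefully tracking the quantifiers over orderings and over Borels. Granting \cite[Proposition 6.4.9]{BTgrcl1}, the rest is bookkeeping together with the elementary Lemma \ref{l:decomposed1}.
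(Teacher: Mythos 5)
Your proposal is correct and follows exactly the route the paper indicates: cite Bruhat--Tits \cite[Proposition 6.4.9]{BTgrcl1} for the product decomposition of $J=J_f$ after extending $f$ by $f(0)=0$, then pass from total decomposition for one (hence all) Borels to absolute total decomposition via Lemma \ref{l:decomposed1}. The paper does not spell out the argument — it simply cites Roche's Lemma 3.2, which uses this very proposition — so your reconstruction, including the clean alternative derivation of $J\cap U_\alpha(F)=U_{\alpha,f(\alpha)}$ directly from uniqueness of the big-cell factorization, fills in precisely the bookkeeping the paper leaves implicit.
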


Next, let $L$ be a Levi subgroup of $G$ (by which we always mean a Levi
for a parabolic subgroup containing $T$). We are interested to know when
$K=JL(\cO)$ is a group. In view of Lemma \ref{l:normalize}, it is
enough to check that $L(\cO)$ normalizes $J_P^\pm$ for some choice of
parabolic $P$ with Levi component $L$. 

\begin{lemma} \label{l:normalize} Let $P$ be a parabolic with Levi
  component $L$.  Suppose that
\begin{equation}\label{e:f-condc}
f(\beta) = f(\alpha+\beta), \quad \forall \alpha \in \Delta_L,  \beta \in \Delta \setminus \Delta_L \text{ such that } \alpha+\beta \in \Delta.
\end{equation}
Then $L(\cO)$ normalizes $J_P^\pm$.
\end{lemma}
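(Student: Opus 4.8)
The plan is to reduce the claim that $L(\cO)$ normalizes $J_P^\pm$ to a statement about how $L(\cO)$ acts on the individual root subgroups $U_{\beta,f(\beta)}$ for $\beta \in \Delta_P^\pm$. Since $J$ is absolutely totally decomposed (Lemma \ref{l:uroche}), and in particular totally decomposed with respect to $P$, we have $J_P^+ = \prod_{\beta \in \Delta_P^+} U_{\beta,f(\beta)}$ (for any fixed ordering), and likewise for $J_P^-$. So it suffices to show that, for each $m \in L(\cO)$ and each $\beta \in \Delta_P^+$, the conjugate $m\, U_{\beta,f(\beta)}\, m^{-1}$ lies in $J_P^+$; the argument for $J_P^-$ is symmetric. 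The key point is that the conjugation action of $L(\cO)$ permutes (up to the unipotent part) the root subgroups with roots in $\Delta_P^+$ among themselves, since $\Delta_P^+$ is stable under the Weyl group $W_L$ of $L$ and under the relevant root-string operations.

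The main step is to analyze $m U_{\beta,f(\beta)} m^{-1}$ concretely. Writing $m \in L(\cO)$ using the structure of $L(\cO)$ — generated by $T(\cO)$ and the root subgroups $U_{\alpha,0} = u_\alpha(\cO)$ for $\alpha \in \Delta_L$ — it suffices to treat the two cases $m \in T(\cO)$ and $m \in U_{\alpha,0}$ with $\alpha \in \Delta_L$ separately. For $m = t_0 \in T(\cO)$, conjugation scales $u_\beta(x)$ to $u_\beta(\beta(t_0) x)$, and since $\beta(t_0) \in \cOt$ we get $t_0 U_{\beta,f(\beta)} t_0^{-1} = U_{\beta,f(\beta)} \subseteq J_P^+$. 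For $m = u_\alpha(y)$ with $y \in \cO$ and $\alpha \in \Delta_L$, the commutator formula of Chevalley expresses $u_\alpha(y) u_\beta(x) u_\alpha(y)^{-1}$ as $u_\beta(x)$ times a product of terms $u_{i\alpha + j\beta}(c_{ij}\, y^i x^j)$ over positive integers $i,j$ with $i\alpha + j\beta \in \Delta$, where $c_{ij}$ are structure constants in $\bZ$ (hence in $\cO$). Every root $i\alpha+j\beta$ occurring here lies in $\Delta_P^+$, because $\Delta_L$ is a closed subsystem and $\beta \in \Delta_P^+$ forces any $\bZ_{\geq 1}$-combination with a root of $\Delta_L$ back into $\Delta_P^+$. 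So each factor lies in some $U_{i\alpha+j\beta, ?}$, and I must check that $?$ is at least $f(i\alpha+j\beta)$ so that the factor lies in $J$; once that is known, total decomposition of $J$ with respect to $P$ lets me reassemble the product into $J_P^+$.

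The hard part — and the place where hypothesis \eqref{e:f-condc} enters — is precisely the valuation bookkeeping in the previous paragraph. The term $u_{i\alpha+j\beta}(c_{ij} y^i x^j)$ has argument of valuation at least $j \cdot f(\beta)$ (since $y \in \cO$ has valuation $\geq 0$ and $x$ has valuation $\geq f(\beta)$), so I need $j f(\beta) \geq f(i\alpha + j\beta)$. By iterating concavity property (a) of $f$, one has $f(i\alpha + j\beta) \leq i f(\alpha) + j f(\beta)$ whenever all the intermediate sums are roots; but $f(\alpha)$ for $\alpha \in \Delta_L$ need not be $\geq 0$, so this alone is not enough, and this is exactly why \eqref{e:f-condc} is imposed. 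In the most important case $i = j = 1$, hypothesis \eqref{e:f-condc} says directly $f(\alpha + \beta) = f(\beta)$, which gives what is needed. For higher $i,j$ (which only occur in non-simply-laced situations, e.g. $i\alpha+j\beta$ along a longer root string), I would argue by repeatedly applying \eqref{e:f-condc} along the root string from $\beta$ through $\alpha+\beta, 2\alpha+\beta, \dots$, using at each stage that the added root lies in $\Delta_L$; this propagates the equality $f(k\alpha+\beta) = f(\beta)$ along the string, and combined with concavity handles the general $u_{i\alpha+j\beta}$ term. I expect this root-string induction, and the verification that all roots $i\alpha+j\beta$ arising genuinely stay in $\Delta_P^\pm$ and that \eqref{e:f-condc} applies at each step, to be the only genuinely delicate part; everything else is formal manipulation with the total decomposition of $J$.
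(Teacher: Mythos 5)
Your proposal is correct, but it takes a genuinely different route from the paper. The paper's proof is indirect: it introduces the auxiliary concave function $f'$ of Lemma \ref{l:fprime} (equal to $f$ on $\Delta \setminus \Delta_L$, redefined to be $0$ on $\Delta_L^+$ and $1$ on $\Delta_L^-$), observes that the Iwahori $I_L < L(\cO)$ is contained in $J_{f'}$ and hence normalizes it, so normalizes $J_{f'}\cap U_P^\pm = J_f \cap U_P^\pm = J_P^\pm$, and finally uses that $L(\cO)$ is generated by its Iwahori subgroups. Your proof is direct: you reduce to conjugation by $T(\cO)$ and by $U_{\alpha,0}$ for $\alpha\in\Delta_L$, and use the Chevalley commutator formula to show that each resulting term $u_{i\alpha+j\beta}(c_{ij}y^ix^j)$ lands back in $J_P^\pm$, with the valuation bookkeeping supplied by \eqref{e:f-condc} along root strings together with concavity. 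The two proofs ultimately rest on the same ingredients (the hypothesis \eqref{e:f-condc} and the Bruhat--Tits concavity machinery, which itself encodes the commutator relations); the paper simply packages the commutator computation inside the statement that a concave $f'$ yields a group $J_{f'}$ whose intersections with root subgroups are $U_{\alpha,f'(\alpha)}$ (Lemma \ref{l:uroche}), whereas you unpack it. One thing to flag: your root-string induction for the terms with $j\geq 2$ is left informal, and the naive induction via $i\alpha+j\beta = (i\alpha+(j-1)\beta)+\beta$ does not always apply (e.g., for $2\alpha+3\beta$ in $G_2$ with $\alpha$ long, $2\alpha+2\beta$ is not a root); one should instead peel off copies of $\alpha$ (applying \eqref{e:f-condc}) and of $\beta$ (applying concavity) in the order the root geometry allows, which can be verified case by case for the rank-two subsystems $B_2$ and $G_2$. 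With that detail filled in, your argument is a valid and more elementary alternative to the paper's proof, at the cost of more explicit combinatorics.
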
 
To prove the above, we will make use of the following lemma, which will
also be useful later:
\begin{lemma}\label{l:fprime}
Assume that $g: \Delta\setminus \Delta_L \to \bZ$ satisfies \eqref{e:f-condc}, in addition
to conditions (a) and (b) (restricting $\alpha, \beta$, and $\alpha+\beta$ to lie in $\Delta \setminus \Delta_L$).
Let $\Delta_L^+ \subseteq \Delta_L$ be any choice of positive roots, and
let $f: \Delta \to \bZ$ be the function defined by
\begin{equation}
f|_{\Delta \setminus \Delta_L} = g, \quad
f|_{\Delta_L^+} = 0, \quad f|_{\Delta_L^-} = 1.
\end{equation}
Then $f$ satisfies conditions (a) and (b). 
\end{lemma}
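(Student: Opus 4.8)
The plan is to verify conditions (a) and (b) for $f$ by a case analysis according to which of $\Delta_L$, $\Delta_P^+$, $\Delta_P^-$ the roots involved lie in. First I would set up notation: recall that $\Delta = \Delta_L \sqcup \Delta_P^+ \sqcup \Delta_P^-$, that $f$ vanishes on $\Delta_L^+$ and on all of $\Delta_P^+$ (wait — no: $f|_{\Delta \setminus \Delta_L} = g$, which need not vanish; only $f|_{\Delta_L^+}=0$), is $1$ on $\Delta_L^-$, and agrees with $g$ off $\Delta_L$. The key structural facts I would use repeatedly are: if $\alpha \in \Delta_L$ and $\beta \in \Delta \setminus \Delta_L$ then $\alpha + \beta$ (when a root) lies in $\Delta \setminus \Delta_L$ and in fact in $\Delta_P^{\pm}$ according to whether $\beta \in \Delta_P^{\pm}$ (since $\Delta_P^{\pm}$ are unions of $\Delta_L$-orbits / are "$L$-stable"); if $\alpha, \beta \in \Delta_L$ then $\alpha + \beta \in \Delta_L$; and a sum of two roots in $\Delta_P^+$ (resp.\ $\Delta_P^-$), if a root, stays in $\Delta_P^+$ (resp.\ $\Delta_P^-$), while a root in $\Delta_P^+$ plus one in $\Delta_P^-$ can land anywhere.

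For condition (b), $f(\alpha) + f(-\alpha) \geq 1$: if $\alpha \in \Delta_L$ then one of $\alpha, -\alpha$ is in $\Delta_L^+$ and the other in $\Delta_L^-$, giving $0 + 1 = 1$; if $\alpha \in \Delta \setminus \Delta_L$ then $-\alpha \in \Delta \setminus \Delta_L$ too and $f(\alpha)+f(-\alpha) = g(\alpha)+g(-\alpha) \geq 1$ by the hypothesis on $g$. For condition (a), $f(\alpha)+f(\beta) \geq f(\alpha+\beta)$ with $\alpha,\beta,\alpha+\beta \in \Delta$, I would break into cases: (1) all three in $\Delta_L$ — then all three values are in $\{0,1\}$ and one checks directly using that $\alpha+\beta$ being a root with $\alpha,\beta$ simple-ish forces at most one of them negative, or more carefully uses that the three $f$-values are $0$ or $1$ and a sum of two positive roots of $\Delta_L$ is positive, a positive plus negative root could be either; the worst case $0 + 0 \geq 1$ would require $\alpha,\beta$ both positive and $\alpha+\beta$ negative, impossible. (2) $\alpha \in \Delta_L$, $\beta, \alpha+\beta \in \Delta \setminus \Delta_L$ (or symmetrically) — here I use exactly the hypothesis \eqref{e:f-condc}: $g(\beta) = g(\alpha+\beta)$, so I need $f(\alpha) + g(\alpha+\beta) \geq g(\alpha+\beta)$, i.e.\ $f(\alpha) \geq 0$, which holds since $f(\alpha) \in \{0,1\}$. (3) $\alpha, \beta \in \Delta \setminus \Delta_L$ and $\alpha+\beta \in \Delta_L$ — this needs $g(\alpha) + g(\beta) \geq f(\alpha+\beta)$, and since $f(\alpha+\beta) \leq 1$, I would show $g(\alpha)+g(\beta) \geq 1$; note $\alpha+\beta \in \Delta_L$ forces $\beta$ and $-\alpha$ to be "$L$-close", in fact $\alpha \in \Delta_P^+ \Rightarrow \beta \in \Delta_P^-$ (and vice versa), so $\beta$ and $-\alpha$ differ by a root of $\Delta_L$; then apply condition (a) for $g$ to $\alpha$ and $-(\alpha+\beta)$... actually more directly: $g(\alpha) + g(\beta) \geq g(\alpha) + g(-\alpha) \geq 1$ would need $g(\beta) \geq g(-\alpha)$, which follows from \eqref{e:f-condc} applied with the $\Delta_L$-root $\alpha + \beta$: $\beta = (\alpha+\beta) + (-\alpha)$, so $g(\beta) = g(-\alpha)$ by \eqref{e:f-condc} (with $-\alpha$ playing the role of the non-Levi root and $\alpha+\beta$ the Levi root). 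Then $g(\alpha)+g(\beta) = g(\alpha) + g(-\alpha) \geq 1 \geq f(\alpha+\beta)$. (4) $\alpha, \beta, \alpha+\beta$ all in $\Delta \setminus \Delta_L$ — then this is just condition (a) for $g$, which is assumed.

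The main obstacle I expect is case (3) (and to a lesser extent case (1)): ensuring that when a sum of two roots jumps from $\Delta \setminus \Delta_L$ into $\Delta_L$, the hypothesis \eqref{e:f-condc} on $g$ can be invoked with the right roles assigned, and that the combinatorial claims about $\Delta_L$-stability of $\Delta_P^\pm$ are applied correctly. I would state the $L$-stability of $\Delta_P^+$ and $\Delta_P^-$ explicitly as a preliminary observation (it follows from $U_P^+$ being normalized by $L$), and then the case analysis is routine. I expect the whole argument to be short once the cases are laid out; the only subtlety is bookkeeping signs and the $\Delta_L$ versus $\Delta \setminus \Delta_L$ membership of sums.
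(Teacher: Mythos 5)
Your proof is correct and follows the same general strategy as the paper's (case analysis on membership in $\Delta_L$ versus $\Delta \setminus \Delta_L$), but it is actually more complete. The paper's proof disposes of the cases (all three roots in $\Delta_L$), (all three in $\Delta \setminus \Delta_L$), and (one in $\Delta_L$, one in $\Delta \setminus \Delta_L$), and then asserts these are the only cases; it silently omits your case (3), where $\alpha, \beta \in \Delta \setminus \Delta_L$ but $\alpha + \beta \in \Delta_L$. This case is genuinely possible (e.g.\ in $\GL_3$ with $L = \GL_2 \times \GL_1$, take $\alpha = e_1 - e_3$, $\beta = e_3 - e_2$, $\alpha + \beta = e_1 - e_2 \in \Delta_L$) and is not covered by the hypothesis on $g$, which constrains condition (a) only when all three roots lie in $\Delta \setminus \Delta_L$. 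Your resolution is the right one: apply \eqref{e:f-condc} with $\alpha + \beta$ as the $\Delta_L$-root and $-\alpha$ as the outside root to get $g(\beta) = g(-\alpha)$, then use condition (b) for $g$ and the bound $f(\alpha+\beta) \leq 1$. So your write-up supplies a step that the published proof should have included.
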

Note that $J_{f|_{\Delta_L}} = I_L$ is the Iwahori subgroup of $L(\cO)$
corresponding to $\Delta_L^+ \subseteq \Delta_L$.
\begin{proof}
  It is clear (and standard) that $f|_{\Delta_L}$ satisfies
  conditions (a) and (b) (where we require in (a) that $\alpha,
  \beta$, and $\alpha+\beta$ lie in $\Delta_L$). By hypothesis,
  $f|_{\Delta \setminus \Delta_L}$ satisfies conditions (a) and (b)
  (requiring $\alpha, \beta$, and $\alpha+\beta$ to be in
  $\Delta \setminus \Delta_L$ in (a)).  So we only need to check that,
  if $\alpha \in \Delta_L$ and $\beta \in \Delta \setminus \Delta_L$,
  then condition (a) is satisfied in the case that $\alpha+\beta \in
  \Delta$.  This is immediate from \eqref{e:f-condc}.
\end{proof}
\begin{proof}[Proof of Lemma \ref{l:normalize}]
  Choose a subset $\Delta_L^+ \subseteq \Delta_L$ of positive roots
  for $L$.  Let $g = f|_{\Delta \setminus \Delta_L}$, and let $f':
  \Delta \to \bZ$ be as in Lemma \ref{l:fprime} (i.e., $f'|_{\Delta
    \setminus \Delta_L} = f|_{\Delta \setminus \Delta_L}$,
  $f'|_{\Delta_L^+} = 0$ and $f'|_{\Delta_L^-}=1$).  Let $I_L =
  J|_{f'|_{\Delta_L}} < L(\cO)$ be the corresponding Iwahori subgroup
  containing $T(\cO)$.  Then $I_L \leq J_{f'}$, and hence $I_L$
  normalizes $J_{f'}$.  It also normalizes $U_P^\pm$ (since $L$ normalizes the unipotent radical $U_P^+$), so $I_L$
  normalizes $J_{f'} \cap U_P^{\pm} = J_f \cap U_P^{\pm} = J_P^\pm$.
  On the other hand, $L(\cO)$ is generated by all its Iwahori
  subgroups, so $L(\cO)$ also normalizes $J_P^\pm$.  (Note that we
  could have also used the decomposition $L(\cO)=I_L W_L I_L$, for
  $W_L$ the Weyl group of $L$, and the fact that $W_L$ normalizes
  $J_P^+$ under hypothesis \eqref{e:f-condc}.) \qedhere
\end{proof}

\begin{proposition} \label{p:IwahoriK} Let $L$ be a Levi
  subgroup of $G$. Assume that the function $f:\Delta\ra \bZ$
  satisfies conditions (a) and (b) as well as
\eqref{e:f-condc}, and set $J = J_f$.
 Then $K=JL(\cO)$
is a group; moreover, $K$ is decomposed with respect to every
parabolic $P$ with Levi $L$.
\end{proposition}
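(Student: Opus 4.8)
The plan is to reduce Proposition~\ref{p:IwahoriK} to the two lemmas just proved, namely Lemma~\ref{l:normalize} and Lemma~\ref{l:decomposed2}. Indeed, the hypotheses of Proposition~\ref{p:IwahoriK} are exactly the hypotheses of Lemma~\ref{l:normalize}, so that lemma immediately gives that $L(\cO)$ normalizes both $J_P^+$ and $J_P^-$ for every parabolic $P$ with Levi component $L$. (By Lemma~\ref{l:uroche}, $J = J_f$ is absolutely totally decomposed, so in particular $J$ is decomposed with respect to every such $P$, and $J_P^\pm = \prod_{\alpha \in \Delta_P^\pm} U_{\alpha,f(\alpha)}$.) Once we know that $J$ is decomposed with respect to $P$ and that $L(\cO)$ normalizes $J_P^+$ and $J_P^-$, Lemma~\ref{l:decomposed2} applies verbatim: it tells us that $K = JL(\cO)$ is a subgroup of $G(F)$ and that it is decomposed with respect to $P$, with $K_P^\pm = J_P^\pm$ and $K_P^0 = L(\cO)$.

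So the argument is essentially a two-line invocation. First I would note that condition \eqref{e:f-condc} is symmetric enough that choosing $P$ or its opposite $P^-$ changes nothing in the hypothesis, so the conclusion genuinely holds for \emph{every} parabolic $P$ with Levi $L$ (both $U_P^+$ and $U_P^-$ are handled by Lemma~\ref{l:normalize}, which explicitly normalizes $J_P^\pm$). Then I would invoke Lemma~\ref{l:uroche} to get that $J$ is decomposed with respect to each such $P$. Finally I would apply Lemma~\ref{l:decomposed2} to conclude.

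I do not anticipate a real obstacle here; the content has been front-loaded into Lemmas~\ref{l:fprime}, \ref{l:normalize}, and \ref{l:decomposed2}. The only point requiring a moment's care is making sure the normalization statement is applied to \emph{both} signs $J_P^+$ and $J_P^-$ (Lemma~\ref{l:decomposed2} requires $L(\cO)$ to normalize both), but this is already what Lemma~\ref{l:normalize} delivers. One might also double-check that the reference in Lemma~\ref{l:normalize}'s proof to "Lemma~\ref{l:normalize}" in the sentence preceding it should read "Lemma~\ref{l:decomposed2}"; this does not affect the present proof.

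\begin{proof}[Proof of Proposition~\ref{p:IwahoriK}]
  Let $P$ be any parabolic subgroup with Levi component $L$.  By
  Lemma~\ref{l:uroche}, $J = J_f$ is absolutely totally decomposed; in
  particular it is decomposed with respect to $P$, and $J_P^{\pm} =
  \prod_{\alpha \in \Delta_P^{\pm}} U_{\alpha, f(\alpha)}$.  Since $f$
  satisfies conditions (a), (b) and \eqref{e:f-condc},
  Lemma~\ref{l:normalize} shows that $L(\cO)$ normalizes both $J_P^+$
  and $J_P^-$.  Now apply Lemma~\ref{l:decomposed2}: it follows that
  $K = J L(\cO)$ is a subgroup of $G(F)$, that it is decomposed with
  respect to $P$, and that $K_P^{\pm} = J_P^{\pm}$, $K_P^0 = L(\cO)$.
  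As $P$ was an arbitrary parabolic with Levi $L$, this completes the
  proof.
\end{proof}
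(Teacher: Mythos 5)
Your proof is correct and matches the paper's own argument: both apply Lemma~\ref{l:normalize} to get that $L(\cO)$ normalizes $J_P^\pm$ and then invoke Lemma~\ref{l:decomposed2}. The only difference is that you explicitly cite Lemma~\ref{l:uroche} for the decomposition of $J$ with respect to $P$, a step the paper leaves implicit.
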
 

\begin{proof} By Lemma \ref{l:normalize}, $L(\cO)$ normalizes $J_P^+$
  and $J_P^-$.  The result follows then from Lemma \ref{l:decomposed2}.
\end{proof}

The following corollary gives an alternative definition of $K$.

\begin{corollary} Let $L$ be a Levi subgroup of $G$. Let $g:\Delta\ra \bZ$ be a function satisfying the following properties:
\begin{enumerate} 
\item[(i)] $g(\alpha)=0$ for all $\alpha \in \Delta_L$;
\item[(ii)] $g(\alpha) + g(-\alpha) \geq 1$ for all $\alpha\in \Delta\setminus \Delta_L$;
\item[(iii)] $g(\alpha) +g(\beta) \geq g(\alpha+\beta)$, whenever
  $\alpha, \beta, \alpha+\beta \in \Delta$.
\end{enumerate} 
Then $K=\langle U_{\alpha, g(\alpha)}, T(\cO) \rangle$ is a compact
open subgroup of $G$. Moreover, $K\cap U_{\alpha}= U_{\alpha,
  g(\alpha)}$. Finally, $K=L(\cO) J_f$, where $f:\Delta\ra \bCt$ is defined
  from $g|_{\Delta \setminus \Delta_L}$ by Lemma \ref{l:fprime} (for any choice of positive roots
 $\Delta_L^+ \subseteq \Delta_L$).
\end{corollary}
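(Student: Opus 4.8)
The plan is to deduce the corollary from Proposition \ref{p:IwahoriK} and Lemma \ref{l:uroche}, applied to the concave function $f$ that Lemma \ref{l:fprime} builds from $g$. First I would check that $g|_{\Delta\setminus\Delta_L}$ satisfies the hypothesis of Lemma \ref{l:fprime}: restricting (iii) yields condition (a) (for roots in $\Delta\setminus\Delta_L$), condition (ii) is condition (b), and the compatibility \eqref{e:f-condc} is forced by (i)--(iii) as follows. If $\alpha\in\Delta_L$, $\beta\in\Delta\setminus\Delta_L$ and $\alpha+\beta\in\Delta$, then $\alpha+\beta\notin\Delta_L$ (otherwise $\beta=(\alpha+\beta)+(-\alpha)$ would lie in $\Delta_L$); applying (iii) to the pair $(\alpha,\beta)$ gives $g(\beta)\geq g(\alpha+\beta)$, and to $(-\alpha,\alpha+\beta)$ gives $g(\alpha+\beta)\geq g(\beta)$, using $g(\pm\alpha)=0$ from (i) both times, so $g(\beta)=g(\alpha+\beta)$. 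Thus, fixing a choice $\Delta_L^+\subseteq\Delta_L$ of positive roots and letting $f$ be as in Lemma \ref{l:fprime} (that is, $f|_{\Delta\setminus\Delta_L}=g|_{\Delta\setminus\Delta_L}$, $f|_{\Delta_L^+}=0$, $f|_{\Delta_L^-}=1$), that lemma gives that $f$ satisfies conditions (a) and (b), and the equality just shown says that $f$ also satisfies \eqref{e:f-condc}. Hence Proposition \ref{p:IwahoriK} applies with $J=J_f$: the set $L(\cO)J_f$ is a (compact open) subgroup of $G(F)$, decomposed with respect to every parabolic $P$ with Levi $L$, with $K_P^\pm=J_P^\pm$ and $K_P^0=L(\cO)$ (cf.\ Lemma \ref{l:decomposed2}); moreover Lemma \ref{l:uroche} gives $J_f\cap U_\alpha(F)=U_{\alpha,f(\alpha)}$ for all $\alpha$.

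Next I would identify $K$ with $L(\cO)J_f$ by two inclusions, noting first that $K$ is a group by construction. Since $f(\alpha)=g(\alpha)$ for $\alpha\in\Delta\setminus\Delta_L$ while $f(\alpha)\in\{0,1\}\geq 0=g(\alpha)$ for $\alpha\in\Delta_L$, we get $U_{\alpha,f(\alpha)}\subseteq U_{\alpha,g(\alpha)}\subseteq K$ for every $\alpha$, so $J_f\subseteq K$ by \eqref{eq:Jf}; and $L(\cO)=\langle T(\cO),\,u_\alpha(\cO)\mid\alpha\in\Delta_L\rangle\subseteq K$ (generation of $L(\cO)$ by its torus and $\Delta_L$-root subgroups, as in the proof of Lemma \ref{l:normalize} via $L(\cO)=I_LW_LI_L$). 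Hence $L(\cO)J_f\subseteq K$. Conversely, $L(\cO)J_f$ is a group by the previous paragraph and contains every generator of $K$: $T(\cO)\subseteq J_f$; $U_{\alpha,g(\alpha)}=U_{\alpha,f(\alpha)}\subseteq J_f$ for $\alpha\in\Delta\setminus\Delta_L$; and $U_{\alpha,g(\alpha)}=u_\alpha(\cO)\subseteq L(\cO)$ for $\alpha\in\Delta_L$. So $K\subseteq L(\cO)J_f$, and therefore $K=L(\cO)J_f$; in particular $K$ is a compact open subgroup.

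For the root-subgroup identity I would use the decomposed structure of $K=L(\cO)J_f$ from the first paragraph, rather than intersecting with $G(\cO)$ (which need not contain $K$, since $g$, hence $f$, may take negative values off $\Delta_L$). Fix $\alpha\in\Delta$ and a parabolic $P$ with Levi $L$. If $\alpha\in\Delta_L$, then $K\cap U_\alpha(F)\subseteq K\cap L(F)=K_P^0=L(\cO)$, so $K\cap U_\alpha(F)\subseteq L(\cO)\cap U_\alpha(F)=u_\alpha(\cO)=U_{\alpha,0}=U_{\alpha,g(\alpha)}$, and the reverse inclusion holds since $u_\alpha(\cO)\subseteq L(\cO)\subseteq K$. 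If $\alpha\in\Delta\setminus\Delta_L$, choose $P$ so that $U_\alpha\subseteq U_P^+$; then $K\cap U_\alpha(F)\subseteq K\cap U_P^+(F)=K_P^+=J_P^+=J_f\cap U_P^+(F)$, hence $K\cap U_\alpha(F)=J_f\cap U_\alpha(F)=U_{\alpha,f(\alpha)}=U_{\alpha,g(\alpha)}$ by Lemma \ref{l:uroche}; the reverse inclusion is immediate from $U_{\alpha,g(\alpha)}=U_{\alpha,f(\alpha)}\subseteq J_f\subseteq K$.

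I expect the only steps requiring genuine attention to be the derivation of \eqref{e:f-condc} from hypotheses (i)--(iii) in the first paragraph (notably the observation that $\alpha+\beta\notin\Delta_L$ when $\alpha\in\Delta_L$ and $\beta\notin\Delta_L$) and the use of the decomposed structure of $K$ for the root-subgroup identity (one cannot shortcut via $K\subseteq G(\cO)$); everything else follows formally from Proposition \ref{p:IwahoriK}, Lemma \ref{l:uroche}, and the description of $J_f$ in \eqref{eq:Jf}.
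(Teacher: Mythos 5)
Your proof is correct and follows essentially the same route as the paper's: reduce to $K = J_f L(\cO)$ via the two inclusions, then read off $K \cap U_\alpha(F)$ from the decomposed structure supplied by Proposition \ref{p:IwahoriK} together with Lemma \ref{l:uroche}. The one welcome addition is your explicit derivation of \eqref{e:f-condc} from hypotheses (i)--(iii) --- in particular the observation that $\alpha+\beta \notin \Delta_L$ and the two applications of (iii) using $g(\pm\alpha)=0$ --- which is needed before Lemma \ref{l:fprime} and Proposition \ref{p:IwahoriK} can even be invoked, and which the paper leaves implicit.
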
 

\begin{proof} The inclusion $K\subseteq J_{f}L(\cO)$ is clear. For the
  reverse inclusion, note that it is obvious that $J_{f}^\pm \subset K$,
  so we only need to show that $L(\cO)\subset K$. This follows from
  the fact that $L$ is generated by $T$ and the root subgroups. Thus,
  $K=J_{f} L(\cO)$. In particular, by the above proposition, we have a
  direct product decomposition $K=K_P^+K_P^0 K_P^-$ for every
  parabolic with Levi $L$. This implies that for $\alpha \in
  \Delta_P^\pm$, $K\cap U_\alpha=U_{\alpha, g(\alpha)}$. On the other
  hand it is clear that for $\alpha \in \Delta_L$, we have $K\cap
  U_{\alpha} = U_{\alpha, 0}$ since $U_{\alpha,0}\subset L(\cO)$.
\end{proof}

\subsection{Extension of $\bmu$} 
Let $\bmu:T(\cO)\ra \bCt$ be a smooth character. Following Roche
\cite{Roche98}, we define a compact open subgroup $J$ associated to
$\bmu$. To this end, we have to choose a partition $\Delta=\Delta^+
\sqcup \Delta^-$. (Note that this amounts to choosing a Borel
$B\subset G$.) For every $\alpha \in \Delta$, let
\begin{equation}
c_\alpha := \cond(\bmu \circ \alpha^\vee)
\end{equation}
denote the conductor of $\bmu \circ \alpha^\vee$; that is, the
smallest positive integer $c$ for which
$\bmu(\alpha^\vee(1+\fp^c))=\{1\}$.  Let
\begin{equation}\label{eq:fbmu}
  f_{\bmu}(\alpha) = \begin{cases}
    [ c_\alpha/2 ],
    & \text{if $\alpha > 0$,} \\
    [ (c_\alpha+1)/2], & \text{if $\alpha < 0$.}
    \end{cases}
\end{equation}
\begin{lemma}\cite[\S 3]{Roche98}
  Suppose that characteristic of $\Fq$ satisfies the conditions in
  \eqref{eq:char}.  Then $f_{\bmu}$ satisfies conditions (a) and (b)
  of \eqref{eq:rocheGroups}.
\end{lemma}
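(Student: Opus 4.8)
The claim is that the function $f_{\bmu}$ defined in \eqref{eq:fbmu} satisfies conditions (a) and (b) of \eqref{eq:rocheGroups}. Condition (b) is immediate: for any root $\alpha$, one of $\alpha,-\alpha$ is positive and the other negative, and since $\bmu\circ\alpha^\vee = (\bmu\circ(-\alpha)^\vee)^{-1}$ we have $c_\alpha = c_{-\alpha}$; writing $c := c_\alpha$, one gets $f_{\bmu}(\alpha)+f_{\bmu}(-\alpha) = [c/2] + [(c+1)/2] = c \geq 1$, using $c_\alpha \geq 1$ (the conductor is by definition a positive integer) together with the elementary identity $[c/2]+[(c+1)/2]=c$. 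So the real content is condition (a).

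For condition (a), suppose $\alpha,\beta,\alpha+\beta\in\Delta$; I must show $f_{\bmu}(\alpha)+f_{\bmu}(\beta)\geq f_{\bmu}(\alpha+\beta)$. The plan is to split into cases according to the signs of $\alpha,\beta,\alpha+\beta$. The key input — which is where the residue characteristic hypothesis \eqref{eq:char} enters — is a comparison of conductors: for $\alpha,\beta,\alpha+\beta\in\Delta$ one has the relation $(\alpha+\beta)^\vee$ expressed in terms of $\alpha^\vee,\beta^\vee$ via the Chevalley commutator formula, and combined with the excluded primes this yields $c_{\alpha+\beta}\le \max(c_\alpha,c_\beta)$, or more precisely a bound of the form $c_{\alpha+\beta} \leq c_\alpha + c_\beta$ — actually the sharper statement $c_{\alpha+\beta}\le\max(c_\alpha,c_\beta)$ is what is needed and is what Roche establishes (this is exactly the role of the bad-prime list: when $p$ divides certain structure constants the commutator relation degenerates). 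Granting $c_{\alpha+\beta}\le\max(c_\alpha,c_\beta)$, one checks the inequality case by case:
if all three roots are positive, then $f_{\bmu}(\alpha)+f_{\bmu}(\beta) = [c_\alpha/2]+[c_\beta/2]$ and $f_{\bmu}(\alpha+\beta)=[c_{\alpha+\beta}/2] \le [\max(c_\alpha,c_\beta)/2]\le [c_\alpha/2]+[c_\beta/2]$ since each summand is nonnegative and $[\cdot/2]$ is monotone; if all three are negative the same works with $[(c+1)/2]$, using $[(m+1)/2]+[(n+1)/2]\ge [(\max(m,n)+1)/2]$; and in the mixed cases (say $\alpha>0$, $\beta<0$, so $\alpha+\beta$ has either sign) one compares $[c_\alpha/2]+[(c_\beta+1)/2]$ against $[c_{\alpha+\beta}/2]$ or $[(c_{\alpha+\beta}+1)/2]$, again using $c_{\alpha+\beta}\le\max(c_\alpha,c_\beta)$, monotonicity of $[\cdot/2]$, and the elementary fact that both candidate right-hand values are at most $\max(c_\alpha,c_\beta) \le c_\alpha + f_{\bmu}(\beta)\cdot 2$-type bounds. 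All of these reduce to short arithmetic lemmas about floors.

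The main obstacle is the conductor comparison $c_{\alpha+\beta}\le\max(c_\alpha,c_\beta)$ itself, since this is precisely the place the combinatorics of the root system and the prohibited residue characteristics are used: it requires showing that if $\bmu\circ\alpha^\vee$ and $\bmu\circ\beta^\vee$ are both trivial on $1+\fp^c$ then so is $\bmu\circ(\alpha+\beta)^\vee$, which follows from expressing $(\alpha+\beta)^\vee$ additively in terms of $\alpha^\vee$ and $\beta^\vee$ up to a rational coefficient whose denominator is controlled by the non-excluded primes — so that the coefficient is a $p$-adic unit and hence preserves the filtration by $1+\fp^c$. Since the excerpt explicitly attributes this to \cite[\S 3]{Roche98}, in the write-up I would cite Roche for this comparison and then only carry out the elementary floor-arithmetic case analysis above; no essentially new argument is needed beyond organizing the cases carefully.
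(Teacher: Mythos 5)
The paper does not give its own proof of this statement: it is a direct citation of \cite[\S 3]{Roche98}, so your proposal is a reconstruction of Roche's argument rather than something to compare with text in the paper. Your overall strategy is the right one and matches Roche's: prove (b) from $c_\alpha=c_{-\alpha}$ and the identity $[c/2]+[(c+1)/2]=c$, and reduce (a) to the conductor bound $\cond(\bmu\circ(\alpha+\beta)^\vee)\le\max(c_\alpha,c_\beta)$ plus floor arithmetic.

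However, your first explanation of where the excluded primes enter is a misconception. It is not the Chevalley commutator formula nor ``structure constants degenerating''; those bear on the unipotent subgroup relations, not on this lemma. The relevant fact is a purely root-theoretic identity for coroots: if $\alpha,\beta,\alpha+\beta\in\Delta$ then
\[
m\,(\alpha+\beta)^\vee \;=\; p\,\alpha^\vee + q\,\beta^\vee
\]
for positive integers $p,q$ and $m\in\{1,2,3\}$, the integer $m$ arising as a ratio of squared root lengths. For instance, in type $B_2$ with $\alpha,\beta$ both short and $\alpha+\beta$ long one has $2(\alpha+\beta)^\vee=\alpha^\vee+\beta^\vee$, so the combination is genuinely fractional. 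The role of the excluded primes is exactly to guarantee that $m$ is a unit in $\cO$, whence the $m$-th power map on the pro-$p$ group $1+\fp^c$ is a bijection, and the bound on $\cond(\bmu\circ(\alpha+\beta)^\vee)$ follows. Your second, revised phrasing (``rational coefficient whose denominator is a $p$-adic unit'') is essentially this, so you do arrive at the right idea, but you should discard the commutator-formula framing. Note also that for simply-laced factors $m=1$ always, so the conductor bound there needs no hypothesis at all; the table \eqref{eq:char} excludes primes (e.g.\ $\{2,3,5\}$ for $E_8$) for reasons elsewhere in Roche's construction, not for this lemma. Finally, the mixed-sign cases are dispatched too casually: you should split on which of $c_\alpha,c_\beta$ achieves the max, and in the subcase $c_\beta<c_{\alpha+\beta}\le c_\alpha$ with $\alpha>0>\beta$ and $\alpha+\beta<0$ you need $[(c_\beta+1)/2]\ge 1$, i.e.\ the convention $c_\beta\ge 1$ which you invoked for (b) but should also cite here.
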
 

In particular, in view of Lemma \ref{l:uroche}, we obtain an
associated compact open subgroup $J=J_\bmu=J_{f_{\bmu}}$. Note that
the function $f_\bmu$ and the corresponding group $J_\bmu$ depend on
the partition of $\Delta$ into positive and negative roots (or
equivalently, on the chosen Borel $B$). While we ignore this in the
notation, the reader should keep in mind that the Borel $B$ is
present. In particular, we have a decomposition $J=J^+ J^0 J^-$, where
$J^\pm=J_B^\pm$.  Let $J^\bullet = \langle J^+, J^- \rangle$. 

\begin{lemma} \label{l:muJ}\cite[\S 3]{Roche98} There exists a unique character $\mu^J:
  J\ra \bCt$ whose restriction to $J^0=T(\cO)$ equals $\bmu$ and whose
  restriction to $J^\bullet$ is trivial.
\end{lemma}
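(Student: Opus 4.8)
The plan is to construct the character $\mu^J$ directly from the product decomposition of $J$ and then verify it is a well-defined homomorphism. First I would recall that by Lemma \ref{l:uroche} the group $J = J_\bmu$ is absolutely totally decomposed; in particular it is totally decomposed with respect to the chosen Borel $B$, so every element of $J$ factors (bijectively, for a fixed ordering of the root subgroups) as $j = j^+ j^0 j^-$ with $j^0 \in T(\cO)$ and $j^\pm \in J_B^\pm = \prod_{\alpha \in \Delta_B^\pm} U_{\alpha, f_\bmu(\alpha)}$. Uniqueness of $\mu^J$ is then immediate: since $J$ is generated by $T(\cO) = J^0$ and $J^\bullet = \langle J^+, J^- \rangle$, any character agreeing with $\bmu$ on $J^0$ and trivial on $J^\bullet$ is determined on all of $J$. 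So the entire content is \emph{existence}, i.e.\ showing that the set-theoretic map $j \mapsto \bmu(j^0)$, where $j = j^+ j^0 j^-$ is the decomposition above, is a group homomorphism $J \to \bCt$.

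The cleanest route to existence is to exhibit $\mu^J$ as the restriction of a genuine character of a larger group, so that homomorphism property is automatic. Concretely, I would consider the subgroup $\tilde J := J^\bullet \cdot T(\cO)$ together with the extension of $\bmu$ from $T(\cO)$ to $T(F)$ that is trivial on $t^{X^\vee}$ — but this is not quite enough because $J^\bullet$ need not be normal. The approach that actually works, and the one I believe is intended, is to argue directly that $J^\bullet$ is a \emph{normal} subgroup of $J$ with $J/J^\bullet$ a quotient of $T(\cO)$, and that the commutator subgroup $[J,J]$ is contained in $J^\bullet$ together with the subgroup of $T(\cO)$ generated by the $\bmu \circ \alpha^\vee(1+\fp^{c_\alpha})$ — i.e.\ exactly the subgroup on which $\bmu$ is required (by definition of the conductors $c_\alpha$) to vanish. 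Then one applies Lemma \ref{l:char-ext}: the character $\bmu$ of $J^0 = T(\cO) < J$ is trivial on $T(\cO) \cap [J,J]$, hence extends to a character of $J$; and among all such extensions there is a unique one trivial on $J^\bullet$ because, modulo $[J,J]$, the group $J$ is generated by the images of $J^0$ and $J^\bullet$, which intersect only in the image of $J^\bullet \cap T(\cO) \subseteq [J,J]$.

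The key computation, and the main obstacle, is therefore the commutator estimate $[J,J] \subseteq J^\bullet \cdot \langle \bmu \circ \alpha^\vee(1+\fp^{c_\alpha}) \rangle$, or more precisely that $[J,J] \cap T(\cO)$ is contained in the subgroup on which $\bmu$ vanishes. This is where the specific shape of $f_\bmu$ in \eqref{eq:fbmu} — with $f_\bmu(\alpha) + f_\bmu(-\alpha) = c_\alpha$ for $\alpha > 0$ and $= c_\alpha + 1$ otherwise, hence always $\geq c_\alpha$ — is used. The relevant input is the Chevalley commutator formula: $[U_{\alpha,i}, U_{-\alpha,j}]$ lands, up to factors in root subgroups $U_{\beta}$ for $\beta \neq \pm\alpha$ (which are in $J^\bullet$), in $\alpha^\vee(1 + \fp^{i+j})$, and $i + j = f_\bmu(\alpha) + f_\bmu(-\alpha) \geq c_\alpha$ forces $\bmu$ to kill this; commutators $[U_{\alpha,i}, U_{\beta,j}]$ for $\beta \neq -\alpha$ stay inside $J^\bullet$ by concavity (condition (a) on $f_\bmu$); and commutators of $T(\cO)$ with a root subgroup $U_{\alpha, f_\bmu(\alpha)}$ again lie in $U_{\alpha, f_\bmu(\alpha)} \subseteq J^\bullet$. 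I would organize this as: (1) $J^\bullet \triangleleft J$ and $J = T(\cO) \cdot J^\bullet$ using total decomposition and the normalization facts; (2) the Chevalley-formula commutator bound giving $[J,J] \cap T(\cO) \subseteq \langle \alpha^\vee(1+\fp^{c_\alpha}) \rangle$; (3) apply Lemma \ref{l:char-ext} and extract the unique $J^\bullet$-trivial extension. Since Roche \cite{Roche98} already carries out step (2), in a write-up I would cite \cite[\S 3]{Roche98} for the commutator computation and give the short deduction in steps (1) and (3).
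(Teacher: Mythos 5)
The paper itself gives no proof of Lemma~\ref{l:muJ}; it simply cites Roche \cite[\S 3]{Roche98}. Your reconstruction correctly identifies the architecture of Roche's argument: uniqueness is immediate from the factorization $J=J^+J^0J^-$ (since $J$ is generated by $T(\cO)$ and $J^\bullet$), existence reduces to showing that $j^+tj^-\mapsto\bmu(t)$ is a homomorphism, and the essential input is a Bruhat--Tits/Chevalley commutator estimate that uses the precise shape of $f_\bmu$. You are also right that the rank-one commutator $[U_{\alpha,i},U_{-\alpha,j}]$ has torus part in $\alpha^\vee(1+\fp^{i+j})$. (A cosmetic slip: $f_\bmu(\alpha)+f_\bmu(-\alpha)=c_\alpha$ for \emph{every} $\alpha$, not $c_\alpha+1$ for $\alpha<0$, since $[c/2]+[(c+1)/2]=c$ for any integer $c$; the inequality $\geq c_\alpha$ is all you use anyway.)

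The existence step as you have written it has a gap. You route through Lemma~\ref{l:char-ext} via $[J,J]\cap T(\cO)\subseteq\ker\bmu$, and then single out a $J^\bullet$-trivial extension by asserting $J^\bullet\cap T(\cO)\subseteq[J,J]$. That inclusion is not justified: an element of $T(\cO)$ that happens to be a product of elements of the $U_{\alpha,f_\bmu(\alpha)}$ need not be a product of commutators. Moreover, even granting it, knowing that \emph{some} extension of $\bmu$ to $J$ exists does not by itself yield one trivial on $J^\bullet$. The condition that actually does the work --- and the one the Bruhat--Tits computation should be aimed at --- is $J^\bullet\cap T(\cO)\subseteq\ker\bmu$. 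Given that, the construction is immediate and Lemma~\ref{l:char-ext} is unnecessary: $T(\cO)$ normalizes each $U_{\alpha,f_\bmu(\alpha)}$ (because $\alpha(T(\cO))\subseteq\cOt$), hence normalizes $J^\bullet$, so $J^\bullet$ is normal in $J$, $J=T(\cO)J^\bullet$, and $J/J^\bullet\cong T(\cO)/(T(\cO)\cap J^\bullet)$; the containment $T(\cO)\cap J^\bullet\subseteq\ker\bmu$ says exactly that $\bmu$ descends to this quotient, and pulling back gives $\mu^J$. The computation you cite from Roche produces precisely $J^\bullet\cap T(\cO)\subseteq\prod_\alpha\alpha^\vee(1+\fp^{c_\alpha})$, which is what is needed. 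So: right shape of proof and the right ingredients, but the deduction should be reorganized around $J^\bullet\cap T(\cO)$ rather than $[J,J]\cap T(\cO)$.
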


Let $\bmu$ be a strongly parabolic character of $T(\cO)$ and let $L$
denote the corresponding Levi. Let $P$ be a parabolic for $L$, and $B$
the Borel subgroup of $P$.
In terms of $B$, let
$f=f_\bmu$ denote the function associated by Roche, and let $J=J_\bmu$
denote the corresponding compact open subgroup of $G(F)$.

\begin{lemma} \label{l:K} The set $K=JL(\cO)$ is a compact open
  subgroup of $G(F)$. Moreover, for every parabolic subgroup $P$
  containing $L$, we have a decomposition $K=K_P^+ K_P^0 K_P^-$ where
  $K_P^\pm = J_P^\pm$ and $K_P^0=L(\cO)$.
\end{lemma}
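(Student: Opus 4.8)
The plan is to obtain the assertion of the lemma as a direct instance of Proposition~\ref{p:IwahoriK} applied to $f=f_\bmu$. By the lemma of Roche recalled above, $f_\bmu$ satisfies conditions (a) and (b) of \eqref{eq:rocheGroups}, so the only thing left to check is the compatibility condition \eqref{e:f-condc} of Lemma~\ref{l:normalize} relative to $L$. Granting this, Proposition~\ref{p:IwahoriK} shows that $K=JL(\cO)$ is a compact open subgroup, and its proof (which runs through Lemma~\ref{l:decomposed2}) provides exactly the asserted decomposition $K=K_P^+K_P^0K_P^-$, with $K_P^\pm=J_P^\pm$ and $K_P^0=L(\cO)$, for every parabolic $P$ with Levi $L$.

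Thus the whole content lies in verifying \eqref{e:f-condc}: for $\alpha\in\Delta_L$ and $\beta\in\Delta\setminus\Delta_L$ with $\alpha+\beta\in\Delta$, we must show $f_\bmu(\beta)=f_\bmu(\alpha+\beta)$. First I would observe that $\beta$ and $\alpha+\beta$ lie on the same side of the Borel $B\subset P$: expanding in simple roots, adding $\alpha\in\Delta_L$ leaves the coefficient of every simple root outside $L$ unchanged, so a root of $\Delta\setminus\Delta_L$ and its $\alpha$-translate both lie in $\Delta_P^+$ or both in $\Delta_P^-$, hence (as $B\subset P$) on the same side. Since by \eqref{eq:fbmu} the integer $f_\bmu(\gamma)$ depends only on the sign of $\gamma$ and on the conductor $c_\gamma=\cond(\bmu\circ\gamma^\vee)$, it then suffices to prove $c_\beta=c_{\alpha+\beta}$.

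For this, I would use the identity $(\alpha+\beta)^\vee=\tfrac{(\beta,\beta)}{(\alpha+\beta,\alpha+\beta)}\,\beta^\vee+\tfrac{(\alpha,\alpha)}{(\alpha+\beta,\alpha+\beta)}\,\alpha^\vee$, which is immediate from $\delta^\vee=2\delta/(\delta,\delta)$. The two coefficients are ratios of squared root lengths inside the (unique) irreducible factor of $\Delta$ containing $\alpha,\beta,\alpha+\beta$, hence each lies in $\{1,r,r^{-1}\}$, where $r\in\{1,2,3\}$ is the length ratio of that factor; clearing denominators gives an identity $m\,(\alpha+\beta)^\vee=n\,\beta^\vee+k\,\alpha^\vee$ in $X^\vee$ with $m,n,k\in\bZ_{>0}$ and $m,n\in\{1,r\}$. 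Composing with $\bmu$, restricting to $\cOt$, and using $\bmu\circ\alpha^\vee|_{\cOt}=1$ — which is precisely where strong parabolicity of $\bmu$ is used, via the equivalence in Theorem~\ref{t:characters}(ii) — yields $\bigl(\bmu\circ(\alpha+\beta)^\vee|_{\cOt}\bigr)^{m}=\bigl(\bmu\circ\beta^\vee|_{\cOt}\bigr)^{n}$. Whenever $r>1$, the prime $r$ is excluded by Convention~\ref{n:char}, so the residue characteristic $p$ divides neither $m$ nor $n$; as the $k$-th power map is a bijection of $1+\fp^{c}$ for all $c$ when $p\nmid k$, it preserves conductors of characters of $\cOt$, and we conclude $c_{\alpha+\beta}=c_\beta$, establishing \eqref{e:f-condc}.

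The step I expect to be the crux is this last one, and it is also where the hypotheses are genuinely needed. The coroot of a sum of roots is not the sum of the coroots, so $\bmu\circ(\alpha+\beta)^\vee$ is linked to $\bmu\circ\beta^\vee$ only through the length-ratio identity; the spurious powers introduced by clearing denominators are harmless exactly because the small primes $2$ and $3$ are excluded by Convention~\ref{n:char}. (Merely knowing that $\bmu$ is $W_L$-invariant does not suffice here, since $\beta$ and $\alpha+\beta$ need not be $W_L$-conjugate — for instance in $G_2$ with $L$ the Levi of the short-root $A_1$.)
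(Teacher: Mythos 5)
Your proof is correct and follows essentially the same route as the paper's: reduce via Proposition~\ref{p:IwahoriK} to verifying~\eqref{e:f-condc}, and prove the latter by writing $(\alpha+\beta)^\vee$ as a rational combination of $\alpha^\vee$ and $\beta^\vee$, then invoking the strong-parabolicity hypothesis ($\bmu\circ\alpha^\vee|_{\cOt}=1$ for $\alpha\in\Delta_L$) together with Convention~\ref{n:char}. You make explicit two points the paper's terse proof leaves implicit --- that $\beta$ and $\alpha+\beta$ lie on the same side of $B$, and why conductors survive clearing the length-ratio denominators --- but the underlying argument is the same.
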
 

\begin{proof} 
If $\alpha\in \Delta_L$,
  then $\bmu\circ \alpha^\vee$ is trivial by \eqref{e:bmualpha}.
Now, if $\beta \in \Delta$ is such that $\alpha+\beta \in \Delta$, then
$(\alpha+\beta)^\vee = a \alpha^\vee + b \beta^\vee$ where $a$ and $b$ are
relatively prime to $q$ (by our assumption on the characteristic of $\Fq$; see Conventions \ref{n:char}).  
Therefore, for every
  $\beta\in \Delta$ such that $\alpha+\beta\in \Delta$, the
  conductor of $\bmu\circ (\alpha+\beta)^\vee$ equals the
  conductor of $\bmu\circ (\beta^\vee)$; i.e.,
\begin{equation}\label{eq:fbmu2}
f_\bmu(\alpha+\beta)=f_\bmu(\beta).
\end{equation} 
The result then follows from  Proposition \ref{p:IwahoriK}.
\end{proof} 

Let $B_L=B \cap L$ denote the corresponding Borel subgroup of $L$. Let
$I_L$ denote the corresponding Iwahori subgroup of $L$. Note that by
\eqref{eq:fbmu}, we have $J_P^0=J\cap L(\cO) = I_L$. Let $\mu^{L(F)}$
denote an extension of $\mu$ to $L(F)$. Let $\mu^{L} = \mu^{L(\cO)} :=
\mu^{L(F)}|_{L(\cO)}$ denote its restriction to $L(\cO)$. Note that
$\mu^L$ is automatically trivial on $I_L^+$ and $I_L^-$, since these
groups are in $[L(F), L(F)]$. Set $K^\bullet_P = \langle K^+_P,
K_P^-\rangle$.

\begin{proposition} \label{p:mu} There exists a unique character
  $\mu=\mu^K: K\ra \bCt$ whose restrictions to $K_P^\bullet$, $J$ and
  $L(\cO)$ equal $1$, $\mu^J$, and $\mu^L$, respectively.
\end{proposition}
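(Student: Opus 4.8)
The plan is to construct $\mu = \mu^K$ on each of the three pieces $K_P^+$, $K_P^0 = L(\cO)$, $K_P^-$ of the decomposition $K = K_P^+ K_P^0 K_P^-$, and then verify that the resulting set-theoretic function is actually a character. First I would set $\mu$ to be trivial on $K_P^\pm = J_P^\pm$ (this is forced by the requirement that $\mu|_{K_P^\bullet} = 1$) and $\mu = \mu^L$ on $L(\cO)$. For a general element $k = k^+ k^0 k^-$ with $k^\pm \in K_P^\pm$, $k^0 \in L(\cO)$, I would then set $\mu(k) := \mu^L(k^0)$. Uniqueness is immediate, since any character restricting as required must agree with this formula on products of the three pieces, which exhaust $K$ by Lemma \ref{l:K}.

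The substance is showing $\mu$ is a homomorphism. The key structural input is that, by Lemma \ref{l:K} and Lemma \ref{l:normalize} (whose hypothesis \eqref{e:f-condc} holds here by \eqref{eq:fbmu2}), $L(\cO)$ normalizes $J_P^+$ and $J_P^-$; moreover $K_P^\bullet = \langle K_P^+, K_P^- \rangle$ is a normal subgroup of $K$ (it is the kernel of the projection $K \to L(\cO)$ coming from the decomposition, once we know that projection is a homomorphism). Concretely, I would argue that $K = K_P^\bullet \rtimes$-style: the subgroup $N := K_P^\bullet$ is normalized by $L(\cO)$, and $K = N \cdot L(\cO)$ with $N \cap L(\cO) = \{1\}$ (the last because $N \subseteq U_P^+(F) \cdot (J_P^+ \text{-}J_P^- \text{ mix})$ meets $L(F)$ trivially by the decomposedness of $J$ with respect to $P$, since a decomposed group has $J_P^+ J_P^0 J_P^-$ as a \emph{unique} factorization). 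Hence $K \cong N \rtimes L(\cO)$, and a character of a semidirect product trivial on $N$ is exactly the pullback of a character of $L(\cO)$; taking that character to be $\mu^L$ gives $\mu$. I would also record the compatibility checks: $\mu|_J = \mu^J$ because $J = J_P^+ J_P^0 J_P^-$ with $J_P^0 = I_L$, and $\mu^J$ is by Lemma \ref{l:muJ} the unique character of $J$ trivial on $J^\bullet \supseteq J_P^\pm$ restricting to $\bmu = \mu^L|_{T(\cO)}$ on $T(\cO)$ — and our $\mu$ restricted to $J$ has exactly these properties (it is trivial on $J_P^\pm$ and equals $\mu^L|_{I_L}$ on $I_L$, which in turn restricts to $\bmu$ on $T(\cO)$ and is trivial on $I_L^\pm$, matching $\mu^J$); likewise $\mu|_{L(\cO)} = \mu^L$ and $\mu|_{K_P^\bullet} = 1$ by construction.

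The main obstacle I expect is verifying cleanly that $K \cong K_P^\bullet \rtimes L(\cO)$ — that is, that $K_P^\bullet$ really is a normal subgroup with trivial intersection with $L(\cO)$ and that the induced map $K \to L(\cO)$ is a well-defined homomorphism rather than merely a set map. Normality of $K_P^\bullet$ under conjugation by $L(\cO)$ is Lemma \ref{l:normalize}; normality under conjugation by $K_P^\pm$ needs a small argument (e.g. both $K_P^+$ and $K_P^-$ normalize $\langle K_P^+, K_P^-\rangle$ trivially as it contains them, but one should check $K_P^+$ normalizes $K_P^-$ modulo $K_P^\bullet$, which is automatic). The triviality $K_P^\bullet \cap L(\cO) = 1$ follows from the uniqueness of the Iwahori-type decomposition of the decomposed group $K$ with respect to $P$: an element lying in both $K_P^\bullet$ and $L(\cO)$ would have two factorizations $k^+ k^0 k^-$, forcing $k^0 = 1$ and then $k \in \langle J_P^+, J_P^-\rangle \cap L(\cO)$, which is trivial since $J$ is decomposed with respect to $P$ and $J_P^\pm \subseteq U_P^\pm(F)$ meet $L(F)$ trivially. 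Once this semidirect-product structure is in hand, the existence and uniqueness of $\mu$ are formal, so I would spend most of the written proof making that structure precise and citing Lemmas \ref{l:decomposed2}, \ref{l:normalize}, \ref{l:K}, and \ref{l:muJ} at the appropriate points.
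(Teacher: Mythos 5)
Your argument breaks down at the claim that $K_P^\bullet \cap L(\cO) = 1$, and this is not a small slip but the crux of the problem. It is true that the multiplication map $K_P^+ \times L(\cO) \times K_P^- \to K$ is a bijection, so each $k\in K$ has a unique factorization $k = k^+k^0k^-$. But $K_P^\bullet = \langle K_P^+, K_P^- \rangle$ is strictly larger than the set $K_P^+K_P^-$; it is the subgroup generated by $K_P^\pm$, and a typical word in these generators (for instance a commutator $[u_\alpha(a), u_{-\alpha}(b)]$) has a \emph{nontrivial} $K_P^0$-component when decomposed. Consequently $K_P^\bullet \cap L(\cO)$ is generically nontrivial, and the paragraph in which you deduce "$k^0 = 1$" from the uniqueness of the factorization is incorrect: an element $g \in K_P^\bullet \cap L(\cO)$ factors as $1\cdot g\cdot 1$, with $k^0 = g \neq 1$. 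So there is no semidirect product $K \cong K_P^\bullet \rtimes L(\cO)$, only an extension $1 \to K_P^\bullet \to K \to L(\cO)/(K_P^\bullet\cap L(\cO)) \to 1$.

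Because of this, the "formal" last step is not formal at all: for the recipe $\mu(k^+k^0k^-) := \mu^L(k^0)$ to define a character, you must verify that $\mu^L$ is trivial on $K_P^\bullet \cap L(\cO)$, and this verification is the actual content of the proposition. The paper supplies exactly this: $K_P^\bullet \subseteq J^\bullet$, so $K_P^\bullet \cap L(\cO) \subseteq J^\bullet \cap I_L$; the character $\mu^J$ is trivial on $J^\bullet$ by Lemma \ref{l:muJ}, and $\mu^J$ and $\mu^L$ agree on $I_L$, hence $\mu^L$ is trivial on $K_P^\bullet \cap L(\cO)$. With that in hand, the elementary group-theoretic lemma (that a character of $H^0$ trivial on $\langle H^+,H^-\rangle\cap H^0$ extends to $H = \langle H^+, H^0, H^-\rangle$ by $h^+h^0h^-\mapsto \chi(h^0)$ when $H^0$ normalizes $H^\pm$) finishes the existence, and uniqueness is as you say. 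Your normality claim for $K_P^\bullet$ is correct and is a useful observation, but it does not by itself replace the missing triviality check.
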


\begin{proof} We need the following elementary fact: let $H^+, H^0,
  H^-$ be subgroups of a group $H$ which generate the group. 
Suppose that $H^0$ normalizes
  $H^\pm$. Let $\chi$ be a character of $H^0$ which is trivial on
  $\langle H^+, H^-\rangle\cap H^0$. Then the map $\tchi: H\ra \bCt$
  defined by $\tchi(h^+h^0h^-)=\chi(h^0)$ is a well-defined extension
  of $\chi$ to $H$.

  By assumption the characters $\mu^J$ and $\mu^L$ agree on $J\cap
  L(\cO)=I_L$; in particular, $\mu^L$ is trivial on $K_P^\bullet \cap
  L(\cO)$ (since $\mu$ is trivial on $J^\bullet$). Applying the above
  fact, we conclude that there exists a character $\mu:K\ra \bCt$
  whose restriction to $K_P^\pm$ is trivial and whose restriction to
  $L(\cO)$ equals $\mu^L$. The latter statement implies that the
  restriction of $\mu$ to $I_L^\pm$ is trivial; hence, the restriction
  of $\mu$ to $J^\pm=K_P^\pm I_L^\pm$ is also trivial. Moreover, the
  restriction of $\mu$ to $T(\cO)$ equals $\bmu$. By Lemma
  \ref{l:muJ}, the restriction of $\mu$ to $J$ equals $\mu^J$.
\end{proof}

\subsection{Proof of Theorem \ref{t:main2}} \label{ss:main2} Let
$\bmu:T(\cO)\ra \bCt$ be a strongly parabolic character of $T(\cO)$
with Levi $L$, and extensions $\mu^{L(F)}$ and $\mu^{L(\cO)} = \mu^L$
as above. Pick a parabolic $P$ containing $L$ and a Borel $B < P$.
Let $J=J_\bmu$ denote the compact open subgroup associated by Roche to
$B$ and $\bmu$. Let $\mu^J:J\ra \bCt$ denote the canonical extension
of $\bmu$ to $J$. Let
\begin{equation} 
\sW:=\ind_J^{G(F)} \mu^J. 
\end{equation} 
By definition, $\sW$ is realized on the space of left
$(J,\mu^J)$-invariant compactly supported functions on $G(F)$. The
group $G(F)$ acts on this space by right translation.  Let $f_0$ be
the function supported on $J$ which there equals $\mu^J$.
Then $f_0\in \sW$; moreover, every element of $\sW$ can be
written as a finite linear combination of elements of the form
$f_0.g$, $g\in G(F)$.

Note that $J\cap L(\cO)=I_L$ is the Iwahori subgroup of $J$. Let $\mu^I$
denote the restriction of $\mu^J$ to $I_L$.  Let $P^\circ_I:=I_L
U_P^+(\cO)$. The character $\mu^I$ extends uniquely to a character of
$P^\circ_I$ which is trivial on $U_P^+(\cO)$. By an abuse of notation,
we denote this character of $P^\circ_I$ by $\mu^I$ as well. Let
\begin{equation} 
\Pi:= \ind_{P^\circ_I}^{G(F)} \mu^I.\footnote{It is easy to check that $\Pi$, thus defined, is isomorphic to the $\Pi$ defined in \eqref{eq:IntroPhi}.}
\end{equation} 
Then $\Pi$ is realized as the space of left $(P^\circ_I,
\mu^I)$-invariant functions on $G(F)$. The group $G(F)$ acts by right
translation.  Define $a_0 : G(F) \to \bC$ to be the function supported
on $P^\circ_I J$ such that
\[
a_0(pj) = \mu^I(p) \mu^J(j), \quad p \in P^\circ_I, j \in J.
\]
One can check that $a_0$ is a well-defined right $(J,\mu^J)$-invariant function in $\Pi$. It follows that the assignment $f_0\mapsto a_0$ defines a morphism of $G(F)$-modules $\Phi:
  \sW\ra \Pi$. 

\begin{proposition} \label{p:drinfeldMorphism} $\Phi$ is an isomorphism.
\end{proposition}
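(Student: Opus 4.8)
The plan is to show that $\Phi\colon \sW\ra \Pi$ is an isomorphism by exhibiting an explicit inverse, or equivalently by checking that $\Phi$ is both injective and surjective on the level of the natural bases of these compactly-induced representations. Since $\sW = \ind_J^{G(F)} \mu^J$ is generated over $G(F)$ by $f_0$, and since $\Phi$ is a map of $G(F)$-modules sending $f_0$ to $a_0$, it suffices to understand the $G(F)$-submodule of $\Pi$ generated by $a_0$ and the stabilizer behavior. Concretely, $\sW$ has a basis indexed by $J\backslash G(F)$ consisting of the translates $\{f_0 . g\}$ (one for each coset, up to the character), and $\Pi$ has a basis indexed by $P^\circ_I\backslash G(F)$. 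The map $\Phi$ sends $f_0 . g$ to $a_0 . g$, which is supported on $P^\circ_I J g$. So the first step is to analyze the double coset space $P^\circ_I\backslash G(F)/J$ and show, using the decomposition results of the previous subsections (in particular Lemma \ref{l:K} and the total decomposition of $J$ with respect to $B$ from Lemma \ref{l:uroche}), that $a_0$ is genuinely a nonzero well-defined element of $\Pi$ and that it is a cyclic vector.

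The key structural input I would use is the Iwahori-type decomposition $G(F) = \coprod P^\circ_I w J$ indexed by a suitable set of representatives $w$ (coming from the affine Weyl group, or more precisely from $W$ together with the coweight lattice), together with the fact that $J$ is (absolutely totally) decomposed and $P^\circ_I = I_L U_P^+(\cO)$ is built compatibly. The essential point is that the ``change of polarization'' from $J$ (which is small in the $U_P^-$-direction and large in the $U_P^+$-direction, or vice versa, controlled by $f_\bmu$) to $P^\circ_I$ (which is the full $U_P^+(\cO)$ in the positive direction and only the Iwahori piece of $L(\cO)$) does not lose or gain any cosets once one accounts for the character $\mu^I$. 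I would verify: (1) $P^\circ_I \cap J = I_L U_P^+(\cO)\cap J$, and the characters $\mu^I$ and $\mu^J$ agree on this intersection, so that $a_0$ is well-defined and $\Phi(f_0) = a_0 \ne 0$; (2) for each $g$, the support $P^\circ_I J g$ meets a single $P^\circ_I$-coset structure in a controlled way, so that $\{a_0 . g\}_{g\in J\backslash G(F)}$ maps bijectively onto a basis of $\Pi$, i.e., the natural map $J\backslash G(F) \to P^\circ_I\backslash G(F)$ induced by inclusion-type considerations is a bijection. For surjectivity it is enough to show every basis vector of $\Pi$ (a function supported on a single coset $P^\circ_I g$) lies in the image; for injectivity, that distinct $J$-cosets give $\Phi$-images supported on distinct $P^\circ_I$-cosets.

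The cleanest route is probably to build the inverse map $\Psi\colon \Pi \ra \sW$ directly by the analogous recipe — sending the generator $a_0$ back to $f_0$, or more symmetrically, defining $\Psi$ by integration/averaging against the appropriate character over $U_P^-(\cO)$ or the relevant piece — and then checking $\Phi\circ\Psi = \id$ and $\Psi\circ\Phi=\id$ by a computation on $f_0$ and $a_0$, using that both are cyclic. The compatibility boils down to the identity $P^\circ_I J = P^\circ_I \cdot (J\cap U_P^-(F))$ as a set (since the $L(\cO)$- and $U_P^+$-parts of $J$ are already absorbed into $P^\circ_I$ up to $I_L$), together with the fact that $\mu^I$ is trivial on $U_P^+(\cO)$ and $\mu^J$ is trivial on $J^\bullet \supseteq J\cap U_P^-(F)$, so the two characters patch consistently. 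This is exactly the kind of ``matching of types under change of parahoric'' that appears in Bushnell–Kutzko and Roche, and in fact $\Phi$ is essentially the isomorphism between two models of the same induced representation.

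The main obstacle I anticipate is bookkeeping the decomposition of $G(F)$ into double cosets for $P^\circ_I\backslash G(F)/J$ carefully enough to see the bijection on bases — one must be sure that the ``asymmetry'' in Roche's function $f_\bmu$ (the $[c_\alpha/2]$ versus $[(c_\alpha+1)/2]$ split between positive and negative roots) is precisely what makes $J$ and $P^\circ_I$ ``Iwahori-conjugate'' enough that no cosets collapse or split. Equivalently, the subtlety is checking that $\Phi$ does not kill any basis vector and hits every basis vector, which reduces to the combinatorial statement that the inclusion of coset spaces $J\backslash G(F)\hookrightarrow P^\circ_I\backslash G(F)$ coming from $g\mapsto$ (support of $a_0.g$) is a bijection; all the representation-theoretic content (the two characters agreeing on overlaps) is comparatively routine given Lemmas \ref{l:muJ}, \ref{l:K}, and \ref{p:mu}. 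Once the coset combinatorics is pinned down, $\Phi$ being a $G(F)$-map sending a cyclic generator to a cyclic generator with matching stabilizer-characters forces it to be an isomorphism.
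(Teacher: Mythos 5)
Your proposal diverges from the paper's proof, which is a short citation: Roche \cite[Theorem 7.5]{Roche98} shows $(J,\mu^J)$ is a \emph{cover} of $(T(\cO),\bmu)$ in the Bushnell--Kutzko sense, \cite[Proposition 8.5]{Bushnell98} upgrades this to a cover of $(I_L,\mu^L)$, and the explicit isomorphism $\Phi$ is the one constructed for arbitrary covers in \cite[\S 2]{Dat99} (or \cite[Theorem 2]{Blondel}). Nothing in the paper's proof is a direct coset computation.

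There is, however, a genuine gap in your argument, and it is structural rather than a matter of bookkeeping. You write that ``$\Pi$ has a basis indexed by $P^\circ_I\backslash G(F)$'' and build your whole plan around matching this basis with the one for $\sW$ indexed by $J\backslash G(F)$. But $P^\circ_I = I_L U_P^+(\cO)$ is contained in the proper parabolic $P(F) = L(F)U_P^+(F)$, which has positive codimension in $G(F)$ (unless $L = G$, i.e., $\bmu$ is unramified). Hence $P^\circ_I$, although compact, is \emph{not open} in $G(F)$, the quotient $P^\circ_I\backslash G(F)$ is not discrete, and $\Pi = \ind_{P^\circ_I}^{G(F)}\mu^I$ is a space of smooth (locally constant) compactly supported functions with no natural basis of ``coset indicator functions.'' The ``natural map $J\backslash G(F)\to P^\circ_I\backslash G(F)$'' you invoke does not exist, and the entire strategy of showing it is a bijection on bases is not available. (For $\sW$ this works, because $J$ is open; for $\Pi$ it does not.)

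A second, related underestimate: even after correcting the framework to use integrals rather than sums, constructing the inverse $\Psi$ ``by averaging'' and verifying $\Phi\circ\Psi = \Psi\circ\Phi = \mathrm{id}$ is not ``comparatively routine'' combinatorics. Making that averaging work is precisely the assertion that $(J,\mu^J)$ is a cover, i.e., that the relevant elements of $\sH(G(F),J,\mu^J)$ supported on intertwining double cosets are invertible; this is the content of Roche's Theorem 7.5 (whose proof uses the nondegeneracy of a bilinear form on a Lie algebra and substantial arguments of Bushnell--Kutzko type), and it is exactly the input the paper cites. So to salvage your approach you would, in effect, be re-proving the cover property; the paper instead quotes it and then invokes Dat/Blondel for the explicit model isomorphism.

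Your remark at the end --- that $\Phi$ is a $G(F)$-map sending a cyclic generator to a cyclic generator with matching stabilizer-characters --- is also not by itself enough to conclude $\Phi$ is an isomorphism, because $\Pi$ is not compactly induced from an open subgroup, so ``cyclic with matching stabilizer'' does not pin down the module. The verification that $a_0$ generates $\Pi$ (surjectivity) and that nothing is killed (injectivity) really does require the cover/intertwining machinery.
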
 

\begin{proof} According to \cite[Theorem 7.5]{Roche98}, $(J,\mu^J)$ is a
  cover of $(T(\cO), \bmu)$, in the sense of \cite[Definition
  8.1]{Bushnell98} (in \cite{Roche98}, the residue characteristic is further
 restricted
so as to obtain a nondegenerate bilinear form on the Lie algebra,
but this restriction can be relaxed using the dual Lie algebra as in \cite{Yu-ctsr}:
see \cite[\S 3.1.2, \S A.2]{geometrization}).
  It follows from \cite[Proposition 8.5]{Bushnell98}
  that $(J,\mu)$ is also a cover of $(I_L,\mu^L)$. The explicit
  isomorphism above is constructed (in the general setting of types)
  in \cite[\S 2]{Dat99} (see also \cite[Theorem 2]{Blondel}, where an even more general statement about covers is proved).
\end{proof}

Recall from \eqref{e:sv-defn} that $\sV:=\ind_K^{G(F)} \mu$. By
definition, this is a submodule of $\sW$. On the other hand, let
$P^\circ = L(\cO) U_P^+(F)$. The character $\mu^L$ extends uniquely to
a character of $P^\circ$ which is trivial on $U_P^+(F)$. By an abuse
of notation, we denote this character by $\mu^L$ as well. Then,
recalling the definition of $\Theta$ in \eqref{eq:Theta}, we have an
isomorphism
\[
\Theta := \iota_{P(F)}^{G(F)} \ind_{L(\cO)}^{L(F)} \mu^L \simeq
\ind_{P^\circ}^{G(F)} \mu^L.
\]
We identify $\Theta$ with the $G(F)$-module on the RHS of the above
isomorphism. With this convention, it is clear that we have an
inclusion $\Theta \inj \Pi$. To establish Theorem \ref{t:main2}, we
prove that the restriction of $\Phi$ to $\sV$ defines an isomorphism
$G(F)$-modules $\sV \isom \Theta$. To this end, we define averaging
(or symmetrization) maps $\sW\ra \sV$ and $\Pi\ra \Theta$ and show
that they are compatible with $\Phi$. 
 
Recall that $\sW$ is realized on the space of left
$(J,\mu^J)$-invariant functions on $G(F)$. Under this identification,
the subspace $\sV\subset \sW$ is identified with the space of left
$(L(\cO), \mu^L)$-invariant functions in $\sW$. On the other hand,
$\Pi$ is identified with the space of left $(P^\circ_I,
\mu^I)$-invariant functions on $G(F)$, and $\Theta$ is the subspace of
left $(L(\cO), \mu^L)$-invariant functions in $\Pi$.
 
Choose a Haar measure on $L$ such that the volume of $L(\cO)$ equals
$1$.  For every function $f:G(F)\ra \bC$, define $f^c$ by
 \begin{equation} 
 f^c (x) = \int_{L(\cO)} \mu^L (l) f(l^{-1}x) dl 
\end{equation} 
Then $f\mapsto f^c$ defines a splitting of the natural inclusion of
left $(L(\cO), \mu^L)$-invariant functions on $G(F)$ into the space of all
functions on $G(F)$.  Note that this splitting obviously commutes with
the action of $G(F)$ on the space of all functions by right
translation. Therefore, the map $f\mapsto f^c$ defines a splitting of
the natural inclusions of $G(F)$-modules $\sV\inj \sW$ and $\Theta\inj
\Pi$. Our goal is to show that the diagram 
\begin{equation} \label{e:commsq}
\xymatrix@R=1.0cm@C=1.0cm{ 
\sW \ar@{.>}@/^/[d] \ar[r]^\Phi &  \Pi  \ar@{.>}@/^/[d]\\
\sV \ar@{^{(}->}[u] \ar[r]  & \Theta \ar@{^{(}->}[u]
}
\end{equation} 
commutes, where the averaging maps are denoted by the dotted arrows.  The key computation is the following: 

\begin{proposition} $\Phi(f_0^c)=a_0^c$. 
\end{proposition}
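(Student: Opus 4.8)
The plan is to compute $\Phi(f_0^c)$ and $a_0^c$ by hand and check they coincide as functions on $G(F)$. The conceptual difficulty is that $f\mapsto f^c$ averages over \emph{left} translates by $L(\cO)$, whereas $\Phi$ is only known to intertwine the \emph{right} action of $G(F)$, so one cannot slide $\Phi$ inside the integral directly. I would get around this by first rewriting $f_0^c$ as a finite sum of \emph{right} translates of $f_0$.

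Concretely, the first step is the identity
\[
f_0^c \;=\; \vol(I_L)\sum_{l\in R}\mu(l)\,(l^{-1}\cdot f_0),
\]
where $\cdot$ denotes the right regular action $(g\cdot f)(x)=f(xg)$, $R\subset L(\cO)$ is a set of representatives for $I_L\backslash L(\cO)$ (equivalently, via $l\mapsto Jl$, for $J\backslash K$, using $K=JL(\cO)$ and $J\cap L(\cO)=I_L$), and the Haar measure on $L$ is normalized by $\vol(L(\cO))=1$. Both sides are supported on $K$, and evaluating at $x\in K$ via the partition $K=\bigsqcup_{l\in R}Jl$ and the multiplicativity of $\mu$ on $K$, each side equals $\vol(I_L)\,\mu(x)$. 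Applying the right-equivariant map $\Phi$ with $\Phi(f_0)=a_0$ then gives $\Phi(f_0^c)=\vol(I_L)\sum_{l\in R}\mu(l)\,(l^{-1}\cdot a_0)$, and it remains to recognize this as $a_0^c$.

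Next I would record the structural facts needed. Since $J$ is totally decomposed with respect to $P$ (Lemma \ref{l:uroche}), $J=J_P^+I_LJ_P^-$; since the roots of $U_P^+$ are positive we have $f_\bmu\geq 0$ on them, so $J_P^+\subseteq U_P^+(\cO)$; and since $I_L\subseteq L(\cO)$ normalizes $U_P^+(\cO)$, these combine to give $\supp(a_0)=P^\circ_I J=I_L\,U_P^+(\cO)\,J_P^-$. Because $L(\cO)$ normalizes both $U_P^+(\cO)$ and $J_P^-$ (Lemma \ref{l:normalize} together with \eqref{eq:fbmu2}), the set $S:=L(\cO)\,U_P^+(\cO)\,J_P^-$ is left $L(\cO)$-stable, and the product map $L(\cO)\times U_P^+(\cO)\times J_P^-\to S$ is a bijection (injectivity from the semidirect product $P=L\ltimes U_P^+$ and the open immersion $P\times U_P^-\hookrightarrow G$ onto the big cell). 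Finally, $a_0\equiv 1$ on $U_P^+(\cO)J_P^-$ (as $U_P^+(\cO)\subseteq P^\circ_I$ with $\mu^I$ trivial there, $J_P^-\subseteq J^\bullet$ with $\mu^J$ trivial there, and $a_0(1)=1$), and $\mu^L$ is a class function on $L(\cO)$ since $\bCt$ is abelian.

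With these in hand the last step is a bookkeeping computation. For $x\in S$ write $x=lun$ uniquely with $l\in L(\cO)$, $u\in U_P^+(\cO)$, $n\in J_P^-$. On the right-translate side, pushing $l'^{-1}$ leftward through $u$ and $n$ gives $xl'^{-1}=(ll'^{-1})(l'ul'^{-1})(l'nl'^{-1})$, which by uniqueness of the factorization of $S$ lies in $\supp(a_0)$ precisely for the one $l'\in R$ with $I_Ll'=I_Ll$; for that $l'$ one gets $\mu(l')a_0(xl'^{-1})=\mu(l')\mu^I(ll'^{-1})\,a_0\big((l'ul'^{-1})(l'nl'^{-1})\big)=\mu(l'll'^{-1})=\mu^L(l)$, using $a_0\equiv 1$ on $U_P^+(\cO)J_P^-$ and the class-function property. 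On the other side, $a_0^c(x)=\int_{L(\cO)}\mu^L(m)a_0(m^{-1}lun)\,dm$, whose integrand vanishes unless $m^{-1}l\in I_L$, i.e.\ $m\in lI_L$, where it equals $\mu^L(m)\mu^I(m^{-1}l)a_0(un)=\mu^L(l)$; so the integral is $\vol(I_L)\mu^L(l)$. Both expressions equal $\vol(I_L)\mu^L(l)$ on $S$ and vanish off $S$, giving $\Phi(f_0^c)=a_0^c$. I expect the main obstacle to be the third paragraph --- correctly pinning down $\supp(a_0)$, the bijectivity onto $S$, and the vanishing of $a_0$ on $U_P^+(\cO)J_P^-$ --- after which the final paragraph is routine.
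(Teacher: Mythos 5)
Your argument is correct and uses the same top-level strategy as the paper: both observe that $f_0^c$ (the left $L(\cO)$-average of $f_0$, which equals $\vol(I_L)\,\mu\cdot\charr(K)$) coincides with the right $L(\cO)$-average of $f_0$ since $K=JL(\cO)=L(\cO)J$ and $\mu$ is multiplicative on $K$; then both push this through $\Phi$ by right-equivariance and match the resulting right-average of $a_0$ against the left-average $a_0^c$. Your reduction of $f_0^c$ to the finite sum $\vol(I_L)\sum_{l\in R}\mu(l)(l^{-1}\cdot f_0)$ is just a discretization of that right-averaging integral, and is the same content as the paper's observation that $f_0^c$ equals the right symmetrization of $f_0$. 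Where you genuinely add value is in the final comparison. The paper's text describes $a_0^c$ as the function ``supported on $P^\circ J$ sending $pj$ to $|P^\circ/P^\circ_I|^{-1}\mu^L(p)\mu^J(j)$'' and finishes by invoking $|P^\circ/P^\circ_I|=|K/J|$; as written this is imprecise, since $P^\circ/P^\circ_I$ has $U_P^+(F)/U_P^+(\cO)$ as a factor and is therefore infinite, and the actual support of $a_0^c$ is $L(\cO)\,\supp(a_0)=L(\cO)U_P^+(\cO)J_P^-$, which is strictly contained in $P^\circ J=L(\cO)U_P^+(F)J_P^-$. Your explicit computation --- pinning down $\supp(a_0)=I_LU_P^+(\cO)J_P^-$, establishing the bijective decomposition $L(\cO)\times U_P^+(\cO)\times J_P^-\to S$ via the big cell, noting $a_0\equiv 1$ on $U_P^+(\cO)J_P^-$, and then evaluating both sides pointwise to $\vol(I_L)\mu^L(l)$ on $S$ (using $L(\cO)$ normalizing $U_P^+(\cO)$ and $J_P^-$, and $\mu^L(l'l(l')^{-1})=\mu^L(l)$) --- circumvents this and is a cleaner, fully rigorous rendering of the intended argument.
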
 
\begin{proof} By definition, $f_0^c$ is the left $L(\cO)$-symmetrization of $f_0$, $f_0^c =|K/J|^{-1} \mu \cdot \charr(K)$.  This is, however, also the right
symmetrization of $f_0^c$.  Since $\Phi$ commutes with the right action of $G(F)$ (it is a morphism of representations), $\Phi(f_0^c)$ is also the
right $L(\cO)$-symmetrization of $a_0$. This, in turn, is the function supported
on $P^0_I K$ which sends $pk$ to $|K/J|^{-1} \mu^I(p) \mu(k)$ for $p \in P^0_I$ and $k \in K$.  

On the other hand, $a_0^c$ is the left $L(\cO)$-symmetrization of $a_0$, i.e.,
the function supported on $P^0 J$ sending $pj$ to $|P^0/P^0_I|^{-1}\mu^L(p)\mu^J(j)$. Since $|P^0/P^0_I|=|K/J|$, this also coincides with the right-symmetrization, i.e., with $\Phi(f_0^c)$.  
\end{proof}

We now resume the proof of Theorem \ref{t:main2}. 
Note that every element of $\sW$ can be written as a finite linear
combination of elements of the form $f_0.g$, where $g\in G(F)$. Next,
the morphisms $\Phi$ and $f\mapsto f^c$ (which represent morphisms
$\sW\ra \sV$ and $\Pi \ra \Theta$) are $G(F)$-equivariant. Therefore,
the above proposition implies that for all $f\in \sW$, we have
\begin{equation} \Phi(f^c)=\Phi(f)^c.
\end{equation}  
Now given $v\in \sV$, we can write $v=w^c$ for some $w\in
W$. Therefore, $\Phi(v)=\Phi(w^c)=\Phi(w)^c \in \Theta$. Therefore,
$\Phi|_\sV$ defines a morphism of $G(F)$-modules $\sV\ra
\Theta$. This clearly creates the commutative square \eqref{e:commsq}.
Since the dotted arrows are surjective and the top horizontal arrow is
an isomorphism, $\Phi|_{\sV}: \sV \ra \Theta$ is surjective. Since it
is the restriction of $\Phi$, which is an isomorphism, it is also
injective. Thus it is an isomorphism. \qed

\subsection{Proof of Theorem \ref{t:Satake}}
We will continue with the notation of the previous subsection. 

\begin{proposition} We have a canonical isomorphism $\End_{G(F)}
  (\Theta) \simeq \sH(L(F), L(\cO), \mu^L)$.
\end{proposition}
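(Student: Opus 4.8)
The plan is to identify both sides as endomorphism algebras of induced modules and then apply the compatibility of parabolic induction with the Bernstein decomposition. First I would use the standard fact that the Hecke algebra $\sH(L(F),L(\cO),\mu^L)$ is canonically the endomorphism algebra $\End_{L(F)}(M)$ of the compactly induced module $M := \ind_{L(\cO)}^{L(F)}\mu^L$. By the definition of $\Theta$ in \eqref{eq:Theta}, $\Theta = \iota_{P(F)}^{G(F)} M$. So it suffices to show that the functor $\iota_{P(F)}^{G(F)}$ induces an isomorphism $\End_{L(F)}(M) \isom \End_{G(F)}(\iota_{P(F)}^{G(F)} M)$; being induced by a functor, such a map is automatically canonical and an algebra homomorphism.

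The second step is to locate $M$ in the Bernstein block $\sR_\bmu(L)$ of $L(F)$ attached to $\bmu$. Since $\mu^{L(F)}$ restricts to $\mu^L$ on $L(\cO)$, multiplication by $\mu^{L(F)}$ identifies $M$ with $\mu^{L(F)} \otimes \ind_{L(\cO)}^{L(F)}\bone$. Now $\ind_{L(\cO)}^{L(F)}\bone = C_c^\infty(L(\cO)\backslash L(F))$ lies in the unramified block $\sR_{\bone}(L)$ (each of its irreducible subquotients has an $L(\cO)$-fixed vector, hence is unramified and has cuspidal support an unramified twist of the trivial character of $T$); twisting by $\mu^{L(F)}$, whose restriction to $T(\cO)$ is $\bmu$, carries $\sR_{\bone}(L)$ onto $\sR_\bmu(L)$, so $M \in \sR_\bmu(L)$. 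A by-product is that $M$ is isomorphic to its twist by every unramified character of $L(F)$, which is exactly what makes the choice of normalization of parabolic induction immaterial here (cf.~the footnote to \eqref{eq:IntroPhi}).

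The crux is the claim that $\iota := \iota_{P(F)}^{G(F)}$ is fully faithful on $\sR_\bmu(L)$. I would obtain this from Roche's theorem \cite{Roche02} on parabolic induction and the Bernstein decomposition: the hypothesis one needs there is that the stabilizer of the inertial class $[T,\bmu]_L$ in the relative Weyl group $N_G(L)/L$ be trivial, and this is exactly the strong parabolicity of $\bmu$, i.e.\ $W_\bmu = W_L$. Under that hypothesis $\iota$ restricts to an equivalence $\sR_\bmu(L) \isom \sR_\bmu(G)$, in particular is fully faithful. If a self-contained argument is preferred, the mechanism is Bernstein's second adjunction together with the geometric lemma: for $M,M' \in \sR_\bmu(L)$ one rewrites $\Hom_{G(F)}(\iota M, \iota M')$ as a Hom out of a Jacquet module of $\iota M'$, which the geometric lemma filters by double cosets in $W_L\backslash W/W_L$; the coset of $1$ contributes $M'$, whereas a coset of $w\notin W_L$ contributes a subquotient whose cuspidal support involves ${}^w\bmu$, which by $W_\bmu = W_L$ is not $W_L$-conjugate to $\bmu$ and so lies in a different block of $L(F)$, contributing nothing to $\Hom(-,M)$. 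Only the identity coset survives, giving $\Hom_{L(F)}(M',M)$.

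I expect the main obstacle to be precisely this last step: extracting from \cite{Roche02} (or, failing that, assembling from the geometric-lemma computation) exactly the full faithfulness of $\iota$ on $\sR_\bmu(L)$, confirming that its hypothesis coincides with strong parabolicity of $\bmu$, and --- a bookkeeping point, but a genuine one --- checking that the abstract Hom-isomorphism so produced is the one induced by $\iota$ on morphism spaces, so that the final identification is canonical and multiplicative. Granting this, the proof concludes at once: taking $M' = M$ yields a canonical algebra isomorphism $\End_{L(F)}(M) \isom \End_{G(F)}(\iota M) = \End_{G(F)}(\Theta)$, which combined with the first step gives the proposition.
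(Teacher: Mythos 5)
Your proposal follows essentially the same approach as the paper: both reduce to the standard identification $\sH(L(F),L(\cO),\mu^L)\simeq\End_{L(F)}(\ind_{L(\cO)}^{L(F)}\mu^L)$ and then invoke the main theorem of \cite{Roche02} to conclude that parabolic induction gives an equivalence of Bernstein blocks (the hypothesis being exactly $W_\bmu = W_L$, equivalently $N_G(\bmu)\subseteq L(F)$), under which the $L(F)$-module $\ind_{L(\cO)}^{L(F)}\mu^L$ is carried to $\Theta$. Your extra geometric-lemma/second-adjunction sketch is a reasonable self-contained backup, but it is not used in the paper, whose argument coincides with your first route.
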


\begin{proof} The fact that $\bmu$ is parabolic with Levi $L$ means
  that $W_\bmu = N_G(\bmu)/T=N_L(T)/ T$. In particular,
  $N_G(\bmu)\subset L(F)$. By the main theorem of \cite{Roche02},
  parabolic induction with respect to $P$ defines an equivalence of
  categories between Bernstein block of $L$ corresponding to the pair
  $(T(\cO), \bmu)$ and that of $G$. Under this equivalence, the
  $L(F)$-module $\ind_{L(\cO)}^{L(F)} \mu^L$ is mapped to $\Theta$.
  Thus, we obtain a canonical isomorphism
  $\End_{G(F)}(\Theta)\simeq \End_{L(F)} (\ind_{L(\cO)}^{L(F)} \mu^L)
  \simeq \sH(L(F), L(\cO), \mu^L)$.
\end{proof}

Note that the algebra $\sH(G(F), K,\mu)$ acts by convolution on the module
$\sV=\ind_{K}^{G(F)} \mu$. It is a standard fact that
$\sH(G(F), K, \mu)\simeq \End_{G(F)} (\sV)$. By Theorem \ref{t:main2},
$\sV$ is canonically isomorphic to $\Theta=\iota_{P}^{G}
\left(\ind_{L(\cO)}^{L(F)} \mu^L\right)$. By the preceding paragraph,
the endomorphism ring of $\Theta$ is canonically isomorphic to the
endomorphism ring of the $L(F)$-module $\ind_{L(\cO)}^{L(F)}
\mu^L$. Therefore, we obtain a canonical isomorphism
$\sH(G(F),K,\mu)\simeq \sH(L(F), L(\cO),\mu^L)$.

Finally, recall that $\mu^L = \mu^{L(F)}|_{L(\cO)}$, where $\mu^{L(F)}: L(F)\ra \bCt$ is a character of $L(F)$.
Then multiplication by $\mu^{L(F)}$
defines a canonical isomorphism of algebras $\sH(L(F),L(\cO)) \simeq
\sH(L(F), L(\cO), \mu^L)$. Moreover, by the Satake isomorphism, we have a
canonical isomorphism $\sH(L(F),L(\cO)) \simeq
\bC[\hT/W_L]=\bC[\hT/W_\bmu]$. Theorem \ref{t:Satake} is established.  \qed
 
\bibliographystyle{alpha}

\bibliography{ref.geometrization}

\end{document}